\numberwithin{equation}{section}
\newtheorem{theorem}{Theorem}[section]
\newtheorem{proposition}[theorem]{Proposition}
\newtheorem{lemma}[theorem]{Lemma}
\newtheorem{corollary}[theorem]{Corollary}
\newtheorem{remark}[theorem]{Remark}
\theoremstyle{definition}
\newcommand{\R}{\mathbb{R}}
\renewcommand{\H}{\mathbb{H}}
\newcommand{\U}{\mathbb{U}}
\newcommand{\V}{\mathbb{V}}
\newcommand{\T}{\mathbb{T}}
\newcommand{\Z}{\mathbb{Z}}
\newcommand{\sH}{\mathscr{H}}
\newcommand{\cA}{\mathcal{A}}
\newcommand{\cB}{\mathcal{B}}
\newcommand{\cE}{\mathcal{E}}
\newcommand{\cP}{\mathcal{P}}
\newcommand{\cl}[1]{\overline{#1}}
\newcommand{\p}{\partial}
\newcommand{\ess}{{\mathrm ess}}
\newcommand{\disc}{{\mathrm disc}}
\newcommand{\diag}{\mathrm {diag}}
\renewcommand{\tilde}{\widetilde}
\renewcommand{\hat}{\widehat}
\newcommand{\wk}{\rightharpoonup}
\newcommand{\sign}{\mathrm{sign}\,}
\setlist[itemize]{leftmargin=6mm} % Remove the indentation
\title{Bound states of $2+1$ fermionic trimers on lattice at strong couplings}
\author[J. Abdullaev]{Janikul Abdullaev}
\address[J. Abdullaev]{Samarkand State University named after Sharof Rashidov, University Boulevard 15, Samarkand 140104, Uzbekistan}
\email{jabdullaev@mail.ru}
\author[A. Khalkhuzhaev]{Ahmad Khalkhuzhaev}
\address[A. Khalkhuzhaev]{Samarkand State University named after Sharof Rashidov, University Boulevard 15, Samarkand 140104, Uzbekistan}
\email{ahmad\_x@mail.ru}
\author[Sh. Kholmatov]{Shokhrukh Yu. Kholmatov} 
\address[Sh. Kholmatov]{University of Vienna, Oskar-Morgenstern Platz 1, 1090 Vienna, Austria \& Samarkand State University named after Sharof Rashidov, University Boulevard 15, Samarkand 140104, Uzbekistan}
\email{shokhrukh.kholmatov@univie.ac.at}
\subjclass[2020]{47A10, 47A55, 45A05, 81Q35}
\keywords{Fermionic trimer, lattice, discrete three-body hamiltonian, discrete spectrum, essential spectrum, strong coupling, zero-range interaction}
\date{\today}
\begin{document}

\begin{abstract}
In this paper, we investigate the bound states of $2+1$ fermionic trimers on a three-dimensional lattice at strong coupling. Specifically, we analyze the discrete spectrum of the associated three-body discrete Schr\"odinger operator $H_{\gamma,\lambda}(K),$ focusing on energies below the continuum and within its gap. Depending on the quasi-momentum $K,$ we show that if the mass ratio $\gamma>0$ between the identical fermions and the third particle is below a certain threshold, the operator lacks a discrete spectrum below the essential spectrum for sufficiently large coupling $\lambda>0.$ Conversely, if $\gamma$ exceeds this threshold, $H_{\gamma,\lambda}(K)$ admits at least one eigenvalue below the essential spectrum. Similar phenomena are observed in the neighborhood of the two-particle branch of the essential spectrum, which resides within the gap and grows sublinearly as $\lambda\to+\infty.$ For $K=0,$ the mass ratio thresholds are explicitly calculated and it turns out that, for certain intermediate mass ratios and large couplings, bound states emerge within the gap, although ground states are absent.

\end{abstract}

\maketitle
%\tableofcontents

\section{Introduction and main results}

\subsection{Background}
The discrete Schr\"odinger operators model various phenomena ranging from crystal lattice vibrations to tight-binding approximations in quantum mechanics. While the spectral properties of single-particle and two-body lattice systems are now sufficiently well-understood (see e.g. \cite{ALMM:2006,KhLA:2022,LKhKh:2021,Mattis:1986} and the references therein), not much seems known on the discrete spectrum of Hamiltonians associated to three or more particle systems with short-range pair interactions even in the continuous case. For instance, some sufficient conditions for existence of bound states are obtained via variational principles in \cite{BChK:1986,ChM:1980,KhM:2018,LDaKh:2016,PMC:1985}; see also \cite{BMO:2018,HS:2000,KSII:1980,Sigal:1976}. The most prominent example of existence is the Efimov effect \cite{ALM:2004,BT:2017,FFT:2023,Gridnev:2013,Gridnev:2014,Lakaev:1993,Muminov:2009,NE:2017,Sobolev:1993,Tamura:1991,Yafaev:1974}.
The main difficulties in the study usually arise from the interplay of combinatorial complexity, long-range interactions, and the potential emergence of novel spectral phenomena. 

\subsection{The model} 
In this work we consider the three-dimensional quantum system in the three-dimensional optical lattice $\Z^3,$ consisting of two identical fermions and a third particle of different species and mass ratio $\gamma$ with respect to the mass of the fermion, where the inter-particle interaction is of zero range. As in \cite{BMO:2018}, for shortness, we refer to this system as the $2+1$ fermionic trimer with zero-range interaction; note that by the exclusion principle, the interaction is only present between each fermion and the third particle.

In the coordinate representation the  corresponding Hamiltonian $\hat\H_{\lambda,\gamma}$ is defined in the subspace 
$$
\hat \sH:=\{\hat f\in \ell^{2}((\Z^3)^3):\, \hat f(x^1,x^2,x^3) =-\hat f(x^2,x^1,x^3),\quad x^1,x^2,x^3\in\Z^3\}
$$ 
of the Hilbert space $\ell^{2}((\Z^3)^3)$ as 
$$
\hat \H_{\gamma,\lambda} = \hat \H_{\gamma,0} - \lambda \hat \V,\quad \lambda\ge0.
$$
Here the free Hamiltonian is the sum of three discrete Laplacians
$$
\hat \H_{\gamma,0} = \hat \Delta \otimes \hat I \otimes \hat I + \hat I \otimes \hat \Delta \otimes \hat I + \gamma\, \hat I  \otimes \hat I \otimes \hat \Delta,
$$
where after rescaling the fermions are assumed having unit mass, and third different particle has mass $1/\gamma>0,$
$$
\hat \Delta \hat f(x) = \frac12\sum_{|s|=1} [\hat f(x) - \hat f(x+s)].
$$
The potential operator $\hat \V= \hat \V_{13}+\hat \V_{23}$ is defined as 
$$
\hat \V_{\alpha3} f(x^1,x^2,x^3) = 
\begin{cases}
f(x^1,x^2,x^3) & \text{if $x^\alpha=x^3,$}\\
0 & \text{otherwise,}
\end{cases}
\quad \alpha=1,2.
$$
As mentioned earlier, by the exclusion principle, the zero-range interaction $\hat \V_{12}$ between fermions is zero. Since $\hat\H_{\gamma,\lambda}$ commutes with members of one-parameter family of translation operators $\{\U_s\}_{s\in\Z^3},$ given by $\U_s\hat f(x_1,x_2,x_3)=f(x_1+s,x_2+s,x_3+s),$ we can decompose the Hamiltonian $\hat\H_{\gamma,\lambda}$ and the underlying space $\hat\sH$ into von Neumann direct integrals. In the coordinate system, relative to the motion of the third particle, we have 
$$
\hat\sH \cong \int_{K\in\T^3}^\oplus L^{2,a}((\T^3)^2)dK,\quad  \hat \H_{\gamma,\lambda}\cong \int_{K\in\T^3}^\oplus H_{\gamma,\lambda}(K)\,dK,
$$
where $\T^3:=(-\pi,\pi]^3$ is the three dimensional torus  (Brillouin zone), $L^{2,a}((\T^3)^2)$ is the  Hilbert space of square-integrable antisymmetric function in $\T^3 \times\T^3,$ and the fiber operators 
$
H_{\gamma,\lambda}(K)
$
are given by 
$$
H_{\gamma,\lambda}(K) = H_{\gamma,0}(K) - \lambda (V_{13}+V_{23}).
$$
Here the free operator $H_{\gamma,0}(K)$ is multiplication by the function 
$$
\cE_{\gamma,K}(p,q) = \epsilon(p)+\epsilon(q)+\epsilon(K-p-q),\quad p,q\in\T^3,
$$
with 
$$
\epsilon(p):=\sum_{i=1}^3(1-\cos p_i),
$$
and $V_{\alpha3}$ are the (relative) interaction potentials
\begin{align*}
V_{13}f(p,q) = &(2\pi)^{-3}\int_{\T^3}f(s,q)ds,\\ 
V_{23}f(p,q) = & (2\pi)^{-3}\int_{\T^3}f(p,s)ds.
\end{align*}
For more details of such a decomposition, we refer, for instance, to \cite{ALM:2004,KhM:2018} and the references therein.

Notice that using a change of variables, by means of the unitary operator $Uf(p,q) = f(-p,-q),$  we can show that $H_{\gamma,\lambda}(K)$ and $H_{\gamma,\lambda}(-K)$ are unitarily equivalent for any $K\in\T^3.$

The essential spectrum of $H_{\gamma,\lambda}(K)$ consists of a union of two segments 
\begin{equation*}
\sigma_{\ess}(H_{\gamma,\lambda}(K))=[\tau_{\gamma,\min }(K,\lambda),
\tau_{\gamma,\max }(K,\lambda)]\cup [\cE_{\gamma,\min}(K), \cE_{\gamma,\max}(K)],
\end{equation*}
where the segments $[\tau_{\gamma,\min }(K,\lambda),
\tau_{\gamma,\max }(K,\lambda)]$ and $[\cE_{\gamma,\min}(K), \cE_{\gamma,\max}(K)]$ are the \emph{two-particle}  and the \emph{three-particle} branches of the essential spectrum of $H_{\gamma,\lambda}(K),$ respectively, 
\begin{equation*}%\label{Emax_Emin11}
\cE_{\gamma,\min}(K) = \min_{p,q}\,\cE_{\gamma,K}(p,q),\quad \cE_{\gamma,\max}(K) = \max_{p,q}\,\cE_{\gamma,K}(p,q),
\end{equation*}
and 
\begin{equation}\label{tau_estotot}
\cE_{\gamma,\min}(K) - \lambda \le \tau_{\gamma,\min }(K,\lambda) \le 
\tau_{\gamma,\max }(K,\lambda)\le \cE_{\gamma,\max}(K) - \lambda,
\end{equation}
see e.g. relations  \eqref{minmax_z_gam_lam},  \eqref{zu712s} and \eqref{shdzu67ec} below. Thus, for large $\lambda,$ the branches of the essential are disjoint. In particular, a discrete spectrum could appear in the gap between these branches.

\begin{remark}\label{rem:varia_not_enaf}
As the pair-potentials $V_{\alpha3}$ are self-adjoint and $(V_{13}+V_{23})^2=V_{13}+V_{23},$ the perturbation $V_{13}+V_{23}$ of $H_{\gamma,\lambda}(K)$ is an orthogonal projection. In particular,  $\|V_{13}+V_{23}\|=1,$ and hence, by the  minmax principle 
$$
\inf\sigma(H_{\gamma,\lambda}(K)) = \inf_{f\in L^{2,a}((\T^3)^2),\,\|f\|_{L^2}=1} (H_{\gamma,\lambda}(K)f,f)_{L^2}
$$
we get 
\begin{equation}\label{bound_sigma_inf12}
\cE_{\gamma,\min}(K) - \lambda \le \inf\sigma(H_{\gamma,\lambda}(K)) \le \cE_{\gamma,\max}(K) - \lambda.
\end{equation}
From the bounds \eqref{bound_sigma_inf12} and \eqref{tau_estotot} it follows that both $\inf\sigma(H_{\gamma,\lambda}(K))$ and $\inf\sigma_\ess(H_{\gamma,\lambda}(K))$ have the same asymptotics for large $\lambda.$ Thus, the variational methods does not provide enough information related to the existence of the discrete spectrum of $H_{\gamma,\lambda}(K).$
\end{remark}

\subsection{Main results}

We are interested in the discrete spectrum of $H_{\gamma,\lambda}(K)$ for the given mass ratio $\gamma>0,$ quasimomentum $K\in\T^3$ and large $\lambda>0$ (depending basically only on $\gamma$).

We first study the eigenvalues lying below the essential spectrum.

\begin{theorem}[Eigenvalues of $H_{\gamma,\lambda}(K)$ below the essential spectrum]\label{teo:exist_eigen_H}
Let $K\in\T^3.$ There exist $\gamma_1:=\gamma_1(K)>0$ and $\tilde \gamma_1:=\tilde \gamma_1(K)>0$ with the following property.

\begin{itemize}
\item For any $\gamma\in(0,\gamma_1)$ there exists $\lambda_1:=\lambda_1(K,\gamma)>0$ such that for any $\lambda>\lambda_1$ the operator $H_{\gamma,\lambda}(K)$ has no eigenvalues below its essential spectrum.

\item For any $\gamma>\gamma_1$ there exists $\lambda_2:=\lambda_2(K,\gamma)>0$ such that for any $\lambda>\lambda_2$ the operator $H_{\gamma,\lambda}(K)$ has an eigenvalue below its essential spectrum.

\item For any $\gamma>\tilde \gamma_1$ there exists $\lambda_3:=\lambda_3(K,\gamma)>0$ such that for any $\lambda>\lambda_3$ the operator $H_{\gamma,\lambda}(K)$ has at least three eigenvalues below its essential spectrum.

\end{itemize}

\end{theorem}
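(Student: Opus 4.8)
The plan is to reduce the spectral problem for $H_{\gamma,\lambda}(K)$ below the essential spectrum to a Birman--Schwinger / Fredholm determinant analysis on the two-dimensional "interaction channel" spanned by the ranges of $V_{13}$ and $V_{23}$. Since $V_{13}+V_{23}$ is a rank-two-type perturbation built from functions of one variable only, the Faddeev-type equation for an eigenvalue $z<\tau_{\gamma,\min}(K,\lambda)$ takes the form of a $2\times 2$ system of integral operators acting on $L^2(\T^3)$; using the antisymmetry of $L^{2,a}((\T^3)^2)$ this system decouples into a single operator equation. Concretely, $z$ is an eigenvalue of $H_{\gamma,\lambda}(K)$ iff $1$ is an eigenvalue of $\lambda\,\mathbf{T}_{\gamma,K}(z)$, where $\mathbf{T}_{\gamma,K}(z)$ is the Birman--Schwinger operator with kernel obtained by integrating $\cE_{\gamma,K}(p,q)-z)^{-1}$ against the channel functions. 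I would first record the explicit form of $\mathbf{T}_{\gamma,K}(z)$ and its basic properties: it is a compact, self-adjoint, positivity-improving (in an appropriate sense) operator, monotone in $z$ on $(-\infty,\tau_{\gamma,\min}(K,\lambda))$, and $\|\mathbf{T}_{\gamma,K}(z)\|\to 0$ as $z\to-\infty$.

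The core of the argument is then an asymptotic analysis as $\lambda\to+\infty$. Since $\tau_{\gamma,\min}(K,\lambda)\sim \cE_{\gamma,\min}(K)-\lambda$ grows linearly downward, I would rescale and look for eigenvalues of the form $z=z_\lambda$ tracking $\tau_{\gamma,\min}(K,\lambda)$ from below. The key step is to show that after this rescaling $\lambda\,\mathbf{T}_{\gamma,K}(z_\lambda)$ converges (in norm, or at least strongly with control of the relevant part of the spectrum) to an explicit limiting operator $\mathbf{T}_{\gamma,K}^{\infty}$ whose kernel no longer involves $\lambda$; this is exactly where the dependence on the mass ratio $\gamma$ enters, because the limiting kernel is governed by the behavior of $\epsilon(K-p-q)$ (the third particle's dispersion, weighted by $\gamma$ through $H_{\gamma,0}$) near the minimizing configuration. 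One then defines $\gamma_1(K)$ by the condition that $\max\sigma(\mathbf{T}_{\gamma,K}^{\infty})$ crosses the threshold value: for $\gamma<\gamma_1$ the top of the spectrum of the limiting operator lies strictly below the Birman--Schwinger threshold, forcing (for large $\lambda$) $\|\lambda\,\mathbf{T}_{\gamma,K}(z)\|<1$ for all admissible $z$, hence no eigenvalue; for $\gamma>\gamma_1$ it lies strictly above, so by monotonicity and continuity in $z$ there is a solution of $\max\sigma(\lambda\,\mathbf{T}_{\gamma,K}(z))=1$, yielding an eigenvalue. For the third bullet, $\tilde\gamma_1(K)$ is defined analogously but requiring that $\mathbf{T}_{\gamma,K}^{\infty}$ have at least three eigenvalues (counting multiplicity) strictly above the threshold; by symmetry of the problem under the point group of $\T^3$ fixing $K$, eigenvalues of $\mathbf{T}_{\gamma,K}^{\infty}$ typically come with multiplicity, so this is a matter of identifying the relevant symmetry-induced multiplicities and showing the corresponding branches persist for large $\lambda$.

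I would carry this out in the following order: (i) derive the Birman--Schwinger reduction and the decoupling via antisymmetry; (ii) establish compactness, self-adjointness, $z$-monotonicity and the $z\to-\infty$ decay of $\mathbf{T}_{\gamma,K}(z)$; (iii) perform the $\lambda\to\infty$ rescaling and identify the limit operator $\mathbf{T}_{\gamma,K}^{\infty}$, with convergence estimates uniform in the relevant $z$-range; (iv) define $\gamma_1(K)$ and $\tilde\gamma_1(K)$ through the spectral count of $\mathbf{T}_{\gamma,K}^{\infty}$ at the threshold and verify $0<\gamma_1\le\tilde\gamma_1$; (v) deduce the three bullets from the intermediate value theorem in $z$ and the persistence of isolated eigenvalues under norm-resolvent convergence.

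The main obstacle I anticipate is step (iii): obtaining \emph{norm} convergence (not merely strong convergence) of the rescaled Birman--Schwinger operator, uniformly for $z$ in a window shrinking toward $\tau_{\gamma,\min}(K,\lambda)$. The difficulty is that as $\lambda\to\infty$ the two-particle branch threshold $\tau_{\gamma,\min}(K,\lambda)$ itself must be located precisely (only the crude bounds \eqref{tau_estotot} are given a priori), and the kernel of $\mathbf{T}_{\gamma,K}(z)$ develops a near-singularity as $z$ approaches this threshold, coming from the region where the two-body subsystem is nearly resonant. Controlling this requires a careful splitting of the integration region near the resonant manifold, an asymptotic expansion of $\tau_{\gamma,\min}(K,\lambda)$ as $\lambda\to\infty$ (presumably $\tau_{\gamma,\min}(K,\lambda)=\cE_{\gamma,\min}(K)-\lambda + o(1)$ with an explicit correction), and a compactness argument showing the singular part contributes negligibly to the operator norm in the limit. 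Once this is in place, the rest is a relatively standard application of the Birman--Schwinger principle together with monotonicity.
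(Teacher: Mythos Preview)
Your overall strategy---Birman--Schwinger reduction to an operator on $L^2(\T^3)$, then asymptotic analysis as $\lambda\to\infty$---matches the paper's. But the execution you sketch has a real gap at step (iii).

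If $\mathbf{T}_{\gamma,K}(z)$ is the naive Birman--Schwinger operator $V(H_{\gamma,0}(K)-z)^{-1}V$ restricted to $\mathrm{Ran}\,V\cap L^{2,a}$ (parametrized by zero-mean $\phi\in L^2(\T^3)$ via $f=\phi(q)-\phi(p)$), then for $z$ tracking $\tau_{\gamma,\min}(K,\lambda)\sim -\lambda$ one has $\lambda/(\cE_{\gamma,K}-z)\to 1$ pointwise, so $\lambda\,\mathbf{T}_{\gamma,K}(z)\to V=\text{identity}$ on the channel subspace. The limit is \emph{degenerate}: every vector has eigenvalue $1$, and you get no information about crossings. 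Resolving this forces you to expand to next order in $1/\lambda$ and do an effective-Hamiltonian/Schur-complement analysis, which is substantially more than what you outlined.

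The paper avoids this by using a Faddeev-type reduction instead of the bare Birman--Schwinger operator. From $(H_{\gamma,0}(K)-z)f=\lambda(V_{13}+V_{23})f$ one obtains, after the antisymmetry reduction, an equation $\Delta_{\gamma,K}(p,z,\lambda)\phi(p)=-\tfrac{\lambda}{(2\pi)^3}\int\frac{\phi(s)\,ds}{\cE_{\gamma,K}(p,s)-z}$, where $\Delta_{\gamma,K}$ is the two-body Fredholm determinant. Conjugating by $\sqrt{|\Delta_{\gamma,K}|}$ gives a self-adjoint compact operator $\cB_{\gamma,K}(z,\lambda)$ on $L^2(\T^3)$ (restricted to the hyperplane $\sH_\Delta$ orthogonal to $|\Delta_{\gamma,K}|^{-1/2}$). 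Two consequences: (a) this operator is \emph{not} monotone in $z$---the $\Delta$ factors destroy that---so your monotonicity claim fails for the operator actually used; (b) the $\Delta$ factors exactly cancel the degenerate leading part, so the limit is nontrivial.

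The decisive structural point you are missing is that the principal part of $\cB_{\gamma,K}$ is \emph{finite rank}. Writing $\frac{1}{\cE-z}=-\frac{1}{z}-\frac{\cE}{z^2}+\big[\frac{\cE}{z}\big]^2\frac{1}{\cE-z}$ and projecting to $\sH_\Delta$ kills the constant and the $\epsilon(p),\epsilon(q)$ pieces, leaving a rank-six kernel proportional to $\gamma\,\xi(K-p-q)/z^2$ with $\xi(p)=\sum_i\cos p_i$; the remainder has norm $O(\lambda/z^2)\to 0$. This rank-six operator has exactly three positive eigenvalues $\tilde\beta_{\gamma,K}^i$, and writing $z=\tau_{\gamma,\min}(K,\lambda)-\alpha$ one finds they converge to $\gamma\,\tilde\beta_K^i(\alpha)$ with $\tilde\beta_K^i$ \emph{independent of $\gamma$}. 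The thresholds are then $\gamma_1^{-1}=\max_i\sup_{\alpha>0}\tilde\beta_K^i(\alpha)$ and $\tilde\gamma_1^{-1}=\min_i\sup_{\alpha>0}\tilde\beta_K^i(\alpha)$; the three bullets follow by comparing $\gamma\tilde\beta_K^i(\alpha)$ with $1$ and invoking continuity in $\alpha$. The ``three eigenvalues'' in the last bullet come not from point-group symmetry at general $K$ but simply from the rank of the principal part.

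So: keep the reduction idea, but replace the bare $\lambda\mathbf{T}$ by the determinant-weighted Faddeev operator, drop the monotonicity hypothesis, and exploit the explicit finite-rank-plus-small-remainder splitting rather than abstract norm-resolvent convergence.
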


%In the course of the proof we will see that there exists $\tilde \gamma_1> \gamma_1$ such that for any $\gamma>\tilde \gamma_1$ and sufficiently large $\lambda$ the operator $H_{\gamma,\lambda}(K)$ admits at least three discrete eigenvalues below the essential spectrum, see Remark \ref{rem:three_eigen_below_essspec}.

Next we study the eigenvalues in the gap, but close to the two-particle branch of the essential spectrum.

\begin{theorem}[Eigenvalues of $H_{\gamma,\lambda}(K)$ in the gap]\label{teo:exist_eigen_H_gap}
Let $K\in\T^3$ and let $\lambda\mapsto T_\lambda$ be any strictly increasing function in $(0,+\infty)$ with 
$$
\lim_{\lambda\to+\infty} \frac{\lambda}{T_\lambda}=+\infty.
$$
There exist $\gamma_2:=\gamma_2(K)>0$ and $\tilde \gamma_2:=\tilde \gamma_2(K)>\gamma_2$ with the following property.

\begin{itemize}
\item For any $\gamma\in(0,\gamma_2)$ there exists $\lambda_4:=\lambda_4(K,\gamma)>0$ such that for any $\lambda>\lambda_4$ the operator $H_{\gamma,\lambda}(K)$ has no eigenvalues in the gap $[\tau_{\gamma,\max}(K,\lambda), \tau_{\gamma,\max}(K,\lambda) + T_\lambda];$

\item For any $\gamma>\gamma_2$ there exists $\lambda_5:=\lambda_5(K,\gamma)>0$ such that for any $\lambda>\lambda_5$ the operator $H_{\gamma,\lambda}(K)$ has an eigenvalue in the gap $[\tau_{\gamma,\max}(K,\lambda), \tau_{\gamma,\max}(K,\lambda) + T_\lambda].$

\item For any $\gamma>\tilde \gamma_2$ there exists $\lambda_6:=\lambda_6(K,\gamma)>0$ such that for any $\lambda>\lambda_6$ the operator $H_{\gamma,\lambda}(K)$ has at least three eigenvalues in the gap $[\tau_{\gamma,\max}(K,\lambda), \tau_{\gamma,\max}(K,\lambda) + T_\lambda].$

\end{itemize}
\end{theorem}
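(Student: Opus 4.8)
The plan is to reduce the eigenvalue problem in the gap to a study of a Birman--Schwinger-type operator, exactly as one expects for the eigenvalue problem below the essential spectrum (Theorem 1.1), but now with the spectral parameter $z$ ranging over the interval $[\tau_{\gamma,\max}(K,\lambda),\tau_{\gamma,\max}(K,\lambda)+T_\lambda]$ in the gap. Since $V_{13}+V_{23}$ has rank equal to (a copy of) $L^2(\T^3)\oplus L^2(\T^3)$ per fiber, for $z$ outside $\sigma_\mathrm{ess}(H_{\gamma,\lambda}(K))$ the number $z$ is an eigenvalue of $H_{\gamma,\lambda}(K)$ if and only if $1$ is an eigenvalue of the $2\times2$ operator matrix
\begin{equation*}
\mathbf{G}_{\gamma,\lambda}(K,z) = \lambda \begin{pmatrix} G^{11}_{\gamma}(K,z) & G^{12}_{\gamma}(K,z) \\ G^{21}_{\gamma}(K,z) & G^{22}_{\gamma}(K,z) \end{pmatrix},
\end{equation*}
where $G^{\alpha\beta}_\gamma(K,z)$ are the partial traces of $(H_{\gamma,0}(K)-z)^{-1}$ against the projections $V_{\alpha3}$; concretely $G^{11}=G^{22}$ is multiplication by $(2\pi)^{-3}\int_{\T^3}(\cE_{\gamma,K}(p,q)-z)^{-1}dp$ in the variable $q$, and $G^{12},G^{21}$ are the corresponding integral operators with kernel $(\cE_{\gamma,K}(p,q)-z)^{-1}$. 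Using antisymmetry, this matrix can be diagonalized into a scalar "symmetric" channel and a scalar "antisymmetric" channel, and one tracks the largest eigenvalue $\mu_{\gamma,\lambda}(K,z)$ of $\mathbf{G}_{\gamma,\lambda}(K,z)$, which is continuous and strictly monotone in $z$ on the gap. Thus $H_{\gamma,\lambda}(K)$ has an eigenvalue in $[\tau_{\gamma,\max},\tau_{\gamma,\max}+T_\lambda]$ precisely when the range of $\mu_{\gamma,\lambda}(K,\cdot)$ over that interval contains $1$, and counting multiplicity reduces likewise to counting how many of the channel functions cross $1$.

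The core is then an asymptotic analysis of $\mathbf{G}_{\gamma,\lambda}(K,z)$ as $\lambda\to+\infty$ for $z$ in the prescribed gap window. Here the key point is that $z = \tau_{\gamma,\max}(K,\lambda)+t$ with $0\le t\le T_\lambda$, and by \eqref{tau_estotot} one has $\tau_{\gamma,\max}(K,\lambda) = -\lambda + O(1)$ while $T_\lambda = o(\lambda)$; therefore $z = -\lambda + o(\lambda)$, i.e. $z\to-\infty$ with $z/\lambda\to-1$. I would substitute this into the integrals $\int_{\T^3}(\cE_{\gamma,K}(p,q)-z)^{-1}dp$ and rescale: the integrand concentrates near the points where $\cE_{\gamma,K}$, as a function of $p$ with $q$ (near its relevant extremal value) fixed, is minimized, and a Laplace/stationary-phase expansion gives $\lambda G^{\alpha\beta}_\gamma(K,z) = c^{\alpha\beta}_\gamma(K) + o(1)$ for explicit constants $c^{\alpha\beta}_\gamma(K)$ depending on the Hessian of $\epsilon$ and on $\gamma$. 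The threshold $\gamma_2(K)$ is then defined by the condition that the top eigenvalue of the limit matrix $(c^{\alpha\beta}_\gamma(K))$ equals $1$: below the threshold it is $<1$ for all admissible $z$ and large $\lambda$, forcing absence of eigenvalues; above it, the top channel function exceeds $1$ near one endpoint and, by the near-two-particle-threshold blow-up of the relevant Green's-function integral (the two-body resolvent trace diverges as $z\to\tau_{\gamma,\max}^+$), is $<1$ near $z=\tau_{\gamma,\max}+T_\lambda$ once $T_\lambda\to\infty$ is exploited — wait, more carefully: one shows the top channel function is monotone and passes through $1$ inside the window, which yields the eigenvalue. The third threshold $\tilde\gamma_2(K)>\gamma_2(K)$ is defined analogously by requiring that the limit matrix, together with the symmetry-induced multiplicities (the symmetric channel acting on spherical harmonics of several degrees on $\T^3$ near the extremum), produces at least three crossings; concretely one computes that at $\gamma=\tilde\gamma_2$ a degenerate (multiplicity-two or -three) eigenvalue of the limiting Birman--Schwinger operator hits $1$, and a perturbation argument spreads this into three genuine eigenvalues of $H_{\gamma,\lambda}(K)$ for $\gamma>\tilde\gamma_2$.

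I would organize the write-up as: (i) the Birman--Schwinger reduction and channel decomposition on the gap, valid for every $\lambda$; (ii) the limit $z=-\lambda+o(\lambda)$ computation identifying $\lim_{\lambda\to\infty}\lambda G^{\alpha\beta}_\gamma(K,z)$ uniformly for $z$ in the window, which is where I expect the real work — controlling the error terms uniformly in $t\in[0,T_\lambda]$ given only $T_\lambda=o(\lambda)$, and handling the subtle behavior of the off-diagonal kernel operator $G^{12}$ (it is not just a multiplication operator, so one needs its limiting operator norm and a description of its eigenfunctions, presumably via an expansion around the extremal $(p,q)$); (iii) definition of $\gamma_2(K)$, $\tilde\gamma_2(K)$ in terms of the spectrum of the explicit limit matrix and verification $\tilde\gamma_2>\gamma_2$; (iv) conclusion by monotonicity of $\mu_{\gamma,\lambda}(K,\cdot)$ together with a boundary-value check at the two endpoints of the window, using that $T_\lambda\to\infty$ to guarantee the window is wide enough to capture the crossing(s). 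The main obstacle is step (ii): establishing the uniform-in-$t$ asymptotics of the Green's function traces and, in the three-eigenvalue case, the precise multiplicity structure of the limiting operator, since a naive bound only gives $O(1)$ rather than a convergent limit and the degeneracy needed for three crossings must be shown to be exactly attained at $\tilde\gamma_2(K)$ and to split in the right direction for $\gamma>\tilde\gamma_2(K)$.
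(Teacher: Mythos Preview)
Your overall architecture (Birman--Schwinger reduction, then asymptotics of the resulting operator for $z$ in the gap window, then threshold definition) matches the paper's, but several of the concrete steps you propose would not go through as stated.

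First, the monotonicity you invoke is not available. You write that the top eigenvalue $\mu_{\gamma,\lambda}(K,z)$ is ``continuous and strictly monotone in $z$ on the gap''; the paper explicitly says the opposite, namely that for the three-body Birman--Schwinger operator one has \emph{a priori no monotonicity} in $z$ or $\lambda$. The existence of a crossing is obtained not by monotonicity but by continuity together with the fact that the limiting eigenvalues vanish as the offset $\alpha:=z-\tau_{\gamma,\max}(K,\lambda)\to+\infty$. Relying on monotonicity would be a genuine gap.

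Second, the asymptotic mechanism you propose is the wrong one, and it leads you to the wrong object. For $z$ in the window one has $-z\sim\lambda$, so $\cE_{\gamma,K}(p,q)-z$ is uniformly large and the integrand does not concentrate; there is no Laplace or stationary-phase regime. The paper instead expands $(\cE_{\gamma,K}-z)^{-1}=-z^{-1}-\cE_{\gamma,K}z^{-2}+\cE_{\gamma,K}^2z^{-2}(\cE_{\gamma,K}-z)^{-1}$, which after projecting to the orthogonality subspace $\sH_\Delta$ produces a rank-six principal part $\tilde\cB$ plus a residual of norm $O(\lambda/z^{2})\to0$ (this is precisely where $T_\lambda=o(\lambda)$ enters). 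Consequently the limits are not $z$-independent constants $c^{\alpha\beta}_\gamma(K)$ as you write: the three positive eigenvalues of $\tilde\cB$ satisfy
\[
\lim_{\lambda\to\infty}\tilde\beta^i_{\gamma,K}\big(\tau_{\gamma,\max}(K,\lambda)+\alpha,\lambda\big)=\gamma\,\tilde\beta_K^i(\alpha),
\]
where $\tilde\beta_K^i(\alpha)$ is $\gamma$-independent but genuinely depends on the offset $\alpha$, and $\tilde\beta_K^i(\alpha)\to0$ as $\alpha\to\infty$. If the limits were $\alpha$-independent constants, your crossing argument would collapse: you could not simultaneously have the eigenvalue $>1$ somewhere and $<1$ elsewhere in the window.

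Third, the clean multiplicative factorization $\gamma\cdot(\text{$\gamma$-independent function of }\alpha)$ is what makes the thresholds well defined: one sets $\gamma_2(K)^{-1}=\max_i\sup_{\alpha>0}\tilde\beta_K^i(\alpha)$ and $\tilde\gamma_2(K)^{-1}=\min_i\sup_{\alpha>0}\tilde\beta_K^i(\alpha)$. Your formulation via eigenvalues of a single constant matrix $(c^{\alpha\beta}_\gamma(K))$ does not yield this structure, and without it you cannot show the trichotomy in $\gamma$. Finally, note that the antisymmetry already reduces the $2\times2$ matrix to a single scalar operator $\cB_{\gamma,K}(z,\lambda)$ on $L^2(\T^3)$, subject to an orthogonality constraint (the space $\sH_\Delta$); working with that scalar operator and its finite-rank principal part, rather than the $2\times2$ block, is how the paper organizes the computation.
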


\begin{corollary}[Satellite eigenvalues of two particle branch of essential spectrum]
Let $K\in\T^3.$ If $\gamma>\max\{\tilde \gamma_1(K), \tilde \gamma_2(K)\},$ then for large $\lambda>0$ the operator $H_{\gamma,\lambda}(K)$ has at least three eigenvalues below  and at least three eigenvalues  above the two particle branch of its essential spectrum. 
\end{corollary}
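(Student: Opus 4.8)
The plan is to derive the corollary directly from Theorems~\ref{teo:exist_eigen_H} and \ref{teo:exist_eigen_H_gap} by combining their third bullets, since those provide the ``at least three eigenvalues'' conclusions below the essential spectrum and in the gap above the two-particle branch, respectively. Concretely, fix $K\in\T^3$ and set $\gamma>\max\{\tilde\gamma_1(K),\tilde\gamma_2(K)\}$. Since $\gamma>\tilde\gamma_1(K)$, Theorem~\ref{teo:exist_eigen_H} yields $\lambda_3=\lambda_3(K,\gamma)$ such that for $\lambda>\lambda_3$ the operator $H_{\gamma,\lambda}(K)$ has at least three eigenvalues below $\sigma_{\ess}(H_{\gamma,\lambda}(K))$; in particular these lie below the two-particle branch $[\tau_{\gamma,\min}(K,\lambda),\tau_{\gamma,\max}(K,\lambda)]$, which is the bottom piece of the essential spectrum once $\lambda$ is large enough that the two branches are disjoint (guaranteed by \eqref{tau_estotot} together with the continuity/boundedness of $\cE_{\gamma,\min}(K),\cE_{\gamma,\max}(K)$ in $K$, which forces the gap to open as $\lambda\to+\infty$). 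Since $\gamma>\tilde\gamma_2(K)$, Theorem~\ref{teo:exist_eigen_H_gap}, applied with any admissible choice of $T_\lambda$ (e.g. $T_\lambda=\sqrt{\lambda}$, which is strictly increasing and satisfies $\lambda/T_\lambda\to+\infty$), yields $\lambda_6=\lambda_6(K,\gamma)$ such that for $\lambda>\lambda_6$ the operator has at least three eigenvalues in $[\tau_{\gamma,\max}(K,\lambda),\tau_{\gamma,\max}(K,\lambda)+T_\lambda]$, hence at least three eigenvalues above the two-particle branch.

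It then remains to take $\lambda$ exceeding $\max\{\lambda_3,\lambda_6,\lambda^*\}$, where $\lambda^*=\lambda^*(K,\gamma)$ is chosen so that for $\lambda>\lambda^*$ the two branches of the essential spectrum are genuinely disjoint, i.e. $\tau_{\gamma,\max}(K,\lambda)<\cE_{\gamma,\min}(K)$. The existence of such $\lambda^*$ is immediate from \eqref{tau_estotot}: $\tau_{\gamma,\max}(K,\lambda)\le\cE_{\gamma,\max}(K)-\lambda\to-\infty$ while $\cE_{\gamma,\min}(K)$ is independent of $\lambda$. For such $\lambda$, ``below the two-particle branch'' coincides with ``below the essential spectrum,'' and the eigenvalues located in the gap above $\tau_{\gamma,\max}(K,\lambda)$ by Theorem~\ref{teo:exist_eigen_H_gap} are genuinely above the two-particle branch (and, by the choice of $T_\lambda$ relative to the size of the gap, still below the three-particle branch—though for the statement as phrased we only need them to be above the two-particle branch, which is automatic). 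Collecting the two groups of at least three eigenvalues each gives the claim.

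I do not expect any serious obstacle here; the only point requiring a line of justification is that the three eigenvalues furnished by each theorem are honestly distinct from one another and correctly positioned relative to the two-particle branch, which is exactly what the cited theorems assert once the branches are disjoint. If one wished to be completely careful about the bookkeeping, the mildly delicate step is confirming that eigenvalues counted ``below the essential spectrum'' in Theorem~\ref{teo:exist_eigen_H} are the same as eigenvalues counted ``below the two-particle branch,'' which holds precisely because for large $\lambda$ the bottom of $\sigma_{\ess}$ is $\tau_{\gamma,\min}(K,\lambda)$; this follows from \eqref{tau_estotot} and the boundedness of $\cE_{\gamma,\min}(K)$. Everything else is a direct quotation of the two main theorems.
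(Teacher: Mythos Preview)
Your proposal is correct and matches the paper's intent: the corollary is stated without proof in the paper precisely because it follows immediately from the third bullets of Theorems~\ref{teo:exist_eigen_H} and~\ref{teo:exist_eigen_H_gap}, exactly as you outline. Your additional care in choosing $\lambda^*$ so that the two branches separate and in picking an admissible $T_\lambda$ is appropriate bookkeeping but not a departure from the paper's (implicit) argument.
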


Unfortunately, due to the nontrivial dependence of $H_{\gamma,\lambda}(K)$ on $K,$ our methods provide limited results in the general case. However, for the special case $K=0,$ we can derive additional insights into the discrete spectrum, addressing both quantitative and qualitative aspects.

The following theorem improves Theorem \ref{teo:exist_eigen_H} not only presenting the existence or absence of discrete spectrum $H_{\gamma,\lambda}(0),$  but also indicating the exact number of eigenvalues.

\begin{theorem}
\label{teo:eiegn_H_below} 
Let $\gamma_1(K),$ $\lambda_1(K,\gamma),$ $\lambda_2(K,\gamma)$  and $\lambda_3(K,\gamma)$ be given by Theorem \ref{teo:exist_eigen_H}. Then  
\begin{equation}\label{def:gamma_1}
\gamma_1(0)=\tilde \gamma_1(0)=\Big(\frac{1}{ (2\pi)^3}\int_{\T^3} \frac{\sin^2p_1 dp}{\epsilon(p)}\Big)^{-1}\approx 4.7655 
\end{equation}
and $\lambda_2(0,\gamma) = \lambda_3(0,\gamma).$ Thus:
\begin{itemize}
\item[\rm(a)] if $\gamma\in(0,\gamma_1(0)),$ then for any $\lambda>\lambda_1(0,\gamma)$ the operator $H_{\gamma,\lambda}(0)$ has no discrete spectrum below its essential spectrum.

\item[\rm(b)] if $\gamma >\gamma_1(0),$ then for any $\lambda>\lambda_2(0,\gamma)$ the operator $H_{\gamma,\lambda}(0)$ has exactly one eigenvalue $z$ lying below its essential spectrum. Moreover, $z$ is a triple eigenvalue (i.e., its multiplicity is three). Moreover, the associated eigenfunctions are odd in $\T^3\times\T^3.$
\end{itemize}
\end{theorem}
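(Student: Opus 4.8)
The plan is to reduce the eigenvalue equation $H_{\gamma,\lambda}(0)f=zf$ for $z<\cE_{\gamma,\min}(0)$ to a problem on a finite-rank operator, the analogue of the Birman–Schwinger / Faddeev operator adapted to the lattice setting. Writing $f=(\lambda(V_{13}+V_{23})f)/(\cE_{\gamma,0}-z)$ and noting that $V_{13}f$ depends only on $q$ while $V_{23}f$ depends only on $p$, one sees that every eigenfunction is determined by two functions $\varphi,\psi\in L^2(\T^3)$; antisymmetry forces $\psi(p)=-\varphi(p)$, so in fact a single $\varphi$ suffices. Substituting back and integrating out, the eigenvalue problem becomes $\varphi = \lambda\, \mathscr{K}_{\gamma}(z)\varphi$ where $\mathscr{K}_\gamma(z)$ acts on $L^2(\T^3)$, is self-adjoint for $z$ below the essential spectrum, and is built from the kernels $\int \frac{ds}{\cE_{\gamma,0}(s,q)-z}$ and $\frac{1}{\cE_{\gamma,0}(p,q)-z}$. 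The number of eigenvalues of $H_{\gamma,\lambda}(0)$ below $\cE_{\gamma,\min}(0)$ equals the number of eigenvalues of $\mathscr{K}_\gamma(z)$ exceeding $1/\lambda$, counted appropriately via the usual monotonicity of Birman–Schwinger in $z$.

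Next I would exploit the $O(3)$ (cubic) symmetry at $K=0$: the operator $H_{\gamma,\lambda}(0)$ commutes with the coordinate permutations and sign changes of $\T^3$, so the relevant integral operator decomposes along irreducible representations. The key computation, already signalled by the formula \eqref{def:gamma_1}, is that the ``dangerous'' part of $\mathscr{K}_\gamma(z)$ as $z\uparrow\cE_{\gamma,\min}(0)=0$ and $\lambda\to+\infty$ is governed by the rank-three block spanned by the odd functions $p\mapsto \sin p_i$, $i=1,2,3$. One shows that on this block $\lambda\,\mathscr{K}_\gamma(z)$ has, in the limit, a triple eigenvalue equal to $\frac{\gamma}{(2\pi)^3}\int_{\T^3}\frac{\sin^2 p_1}{\epsilon(p)}\,dp$ (up to the normalization making the threshold $\gamma_1(0)$ come out as stated), while every other block contributes eigenvalues that stay bounded away from $1/\lambda\cdot$(the right constant); the even/constant block is precisely the one that feeds the two-particle branch and must be excluded. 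This is where the explicit value $\approx 4.7655$ comes from, and it simultaneously forces $\gamma_1(0)=\tilde\gamma_1(0)$ and $\lambda_2(0,\gamma)=\lambda_3(0,\gamma)$, since the first and third eigenvalues below the essential spectrum are born together as the single triple eigenvalue of the symmetric block.

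Then the two assertions follow: for $\gamma<\gamma_1(0)$ the triple block eigenvalue of $\lambda\mathscr K_\gamma(z)$ is, for $z$ near $0$, strictly less than $1$ for all large $\lambda$, and a separate (easier) estimate shows all other blocks also stay below $1$; hence no Birman–Schwinger eigenvalue crosses, giving (a). For $\gamma>\gamma_1(0)$, by continuity and strict monotonicity in $z$ there is exactly one $z<0$ with the triple block eigenvalue of $\lambda\mathscr K_\gamma(z)$ equal to $1$, producing exactly one eigenvalue of multiplicity three with odd eigenfunctions, and one checks no further crossings occur — giving (b). The main obstacle I anticipate is the uniform control, as $\lambda\to+\infty$ and $z\to\cE_{\gamma,\min}(0)$ simultaneously, of the non-symmetric spectral blocks of $\mathscr K_\gamma(z)$: one must rule out that some other irreducible component sneaks an extra eigenvalue above $1/\lambda$, which requires careful asymptotic analysis of the integral kernels near the bottom of the three-particle continuum (and, for the ``no ground state but bound states in the gap'' phenomenon mentioned in the abstract, matching this against the behaviour near the two-particle branch).
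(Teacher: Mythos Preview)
Your high-level strategy --- Birman--Schwinger reduction to a single-variable operator on $L^2(\T^3)$, parity and cubic-symmetry decomposition, and isolation of the rank-three block spanned by $\sin p_i$ --- is the paper's approach. But there is a genuine gap: you repeatedly place the analysis at $z\uparrow \cE_{\gamma,\min}(0)=0$, and this is the wrong regime. For large $\lambda$ the essential spectrum of $H_{\gamma,\lambda}(0)$ has two disjoint branches, and its infimum is the two-particle threshold $\tau_{\gamma,\min}(0,\lambda)=z_{\gamma,\lambda}(0)$, which by \eqref{two_part_ess_K0} and \eqref{nice_estoa0s} sits near $-\lambda+3(1+\gamma)$. ``Below the essential spectrum'' therefore means $z<\tau_{\gamma,\min}(0,\lambda)\sim-\lambda$, not $z$ near $0$; the interval $(\tau_{\gamma,\max}(0,\lambda),0)$ is the \emph{gap}, where the Birman--Schwinger operator has the opposite sign structure (compare Propositions~\ref{prop:property_even_odds} and \ref{prop:z_in_gap}) and is the subject of Theorem~\ref{teo:eiegn_H_gap}, not the present one.

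This misidentification is not cosmetic; it breaks the mechanism that produces $\gamma_1(0)$. The constant $\gamma_1(0)^{-1}=\frac{1}{(2\pi)^3}\int_{\T^3}\sin^2 p_1/\epsilon(p)\,dp$ does not arise from $z\to0$: it arises from evaluating at $z=\tau_{\gamma,\min}(0,\lambda)$ and sending $\lambda\to\infty$, where $\Delta_\gamma(p,z,\lambda)\sim\lambda^{-1}\epsilon(p)$ and the kernel of $\cB^o$ expands in powers of $1/(\hat\gamma-z)\sim\lambda^{-1}$, with rank-three principal part $\gamma\sum_i\sin p_i\sin q_i/(\hat\gamma-z)^2$ and residual of norm $O(\lambda^{-1})$ (Lemmas~\ref{lem:residual_small} and \ref{lem:est_for_e_alpha192}). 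Your formulation $\varphi=\lambda\,\mathscr K_\gamma(z)\varphi$ with $\mathscr K_\gamma$ independent of $\lambda$ and $z$ held near $0$ cannot yield a finite nonzero limiting eigenvalue; at fixed $z$ near $0$ the operator $\lambda\mathscr K_\gamma(z)$ simply blows up in norm. Finally, the ``other blocks'' are not handled by asymptotics near the three-particle continuum: the even part is eliminated by the sign identity $\cB^e(z,\lambda)\le0$ for $z<\tau_{\gamma,\min}(0,\lambda)$, and on the fully odd subspace $\sH_{123}$ the principal part vanishes identically, leaving only the $O(\lambda^{-1})$ residual.
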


\noindent 
Some explicit choices of $\lambda_1(0,\gamma)$ and $\lambda_2(0,\gamma)$ will be given in  the proof.

Now we consider the eigenvalues of $H_{\gamma,\lambda}(0)$ in the gap.

\begin{theorem}[Eigenvalues in the gap]\label{teo:eiegn_H_gap}
Let $\gamma_2(K),$ $\tilde\gamma_2(K),$ $\lambda_4(K,\gamma),$ $\lambda_5(K,\gamma),$ $\lambda_6(K,\gamma)$ and $T_\lambda$ be as in Theorem \ref{teo:exist_eigen_H_gap}. Then
$$
\tilde \gamma_2(0):=\Big(\frac{1}{2\cdot(2\pi)^3}\int_{\T^3} 
\frac{(\cos p_1-\cos p_2)^2dp}{\epsilon(\vec\pi-p)} \Big)^{-1}\approx 5.398489,
$$
and $0<\gamma_2(0)\le 2.93652<\tilde\gamma_2(0).$ Moreover,
\begin{itemize}
\item[\rm(a)] for any $\gamma<\gamma_2(0)$ and $\lambda>\lambda_4(0,\gamma)$ the operator $H_{\gamma,\lambda}(0)$ has no eigenvalues in the portion $[\tau_{\gamma,\max}(0,\lambda),\tau_{\gamma,\max}(0,\lambda)+T_\lambda]$ of the gap;

\item[\rm(b)] for any $\gamma\in(\gamma_2(0),\tilde \gamma_2(0))$ and $\lambda>\lambda_5(0,\gamma)$ the operator $H_{\gamma,\lambda}(0)$ has at least one eigenvalue in $[\tau_{\gamma,\max}(0,\lambda),\tau_{\gamma,\max}(0,\lambda) + T_\lambda];$

\item[\rm(c)] for any $\gamma>\tilde \gamma_2(0)$ and $\lambda>\lambda_6(0,\gamma)$ the operator $H_{\gamma,\lambda}(0)$ has at least three eigenvalues in $[\tau_{\gamma,\max}(0,\lambda) ,\tau_{\gamma,\max}(0,\lambda)+T_\lambda].$
\end{itemize}
Finally, any eigenfunction corresponding to an eigenvalue $z\in (\tau_{\gamma,\max}(0,\lambda), \tau_{\gamma,\max}(0,\lambda) + T_\lambda)$ is an even function in $\T^3\times\T^3.$ Moreover, all eigenvalues given by (b) and (c) are in fact belong to the intervals $(\tau_{\gamma,\max}(0,\lambda),\tau_{\gamma,\max}(0,\lambda) +3.5\gamma)$ and $(\tau_{\gamma,\max}(0,\lambda),\tau_{\gamma,\max}(0,\lambda) + \gamma),$ respectively.
\end{theorem}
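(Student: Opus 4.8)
The plan is to reduce the spectral analysis of $H_{\gamma,\lambda}(0)$ to that of a more tractable operator via a Birman--Schwinger-type (Fredholm determinant / Weinstein--Aronszajn) argument. Since $V_{13}+V_{23}$ is a rank-type (finite-rank in the fibered sense) perturbation built from integral operators, an eigenvalue $z$ of $H_{\gamma,\lambda}(0)$ lying outside $\sigma_{\ess}$ corresponds, through the standard resolvent identity, to $z$ being a characteristic value of an operator matrix $\Delta_{\gamma,\lambda}(z)$ acting on $L^2(\T^3)\oplus L^2(\T^3)$ (the two ``channels'' for $V_{13}$ and $V_{23}$), whose entries are multiplication and convolution-type operators involving $\int (\cE_{\gamma,0}(p,q)-z)^{-1}\,dq$ and $(\cE_{\gamma,0}(p,q)-z)^{-1}$. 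The antisymmetry of the underlying space forces a symmetry relation between the two channels, so the $2\times 2$ block structure diagonalizes into an even and an odd part; this is what should produce the dichotomy between eigenfunctions that are odd (for energies below $\sigma_{\ess}$) and those that are even (for energies in the gap).

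Next I would exploit the special structure at $K=0$. When $K=0$ the dispersion $\cE_{\gamma,0}(p,q)=\epsilon(p)+\epsilon(q)+\epsilon(p+q)$ is invariant under the octahedral symmetry group acting simultaneously on $p$ and $q$, and the Birman--Schwinger kernel inherits this symmetry; decomposing $L^2(\T^3)$ into isotypic components reduces the determinant condition to scalar (or low-dimensional) equations. The counting of eigenvalues below $\sigma_{\ess}$ then comes from analyzing, as $\lambda\to\infty$ and $z\to\cE_{\gamma,\min}(0)-\lambda+o(\lambda)$ (equivalently, rescaling $z=-\lambda + w$ with $w$ bounded), the leading behavior of the relevant scalar functions: the threshold $\gamma_1(0)$ is exactly the value at which the coefficient $\frac{1}{(2\pi)^3}\int_{\T^3}\frac{\sin^2 p_1}{\epsilon(p)}\,dp$ of the odd channel crosses the critical level $1/\gamma$, which is why $\gamma_1(0)=\tilde\gamma_1(0)$ and why the emerging eigenvalue is triple (the odd representation under the octahedral group that carries $\sin p_i$, $i=1,2,3$, is three-dimensional). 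For the gap eigenvalues one rescales near $\tau_{\gamma,\max}(0,\lambda)$; here the relevant quantity is $\tau_{\gamma,\max}(0,\lambda)+\lambda$, which by \eqref{tau_estotot} stays bounded (indeed grows sublinearly), and the condition $\lambda/T_\lambda\to\infty$ guarantees the window $[\tau_{\gamma,\max},\tau_{\gamma,\max}+T_\lambda]$ stays inside the gap and below the three-particle branch, so the same Birman--Schwinger function governs existence; the threshold $\tilde\gamma_2(0)$ arises as the critical value for the even-but-sign-changing representation carrying $\cos p_1-\cos p_2$ (a two-dimensional representation, hence the count ``at least three'' once one adds the remaining component), evaluated at the relevant extremal point $p\mapsto \vec\pi-p$ that realizes $\tau_{\gamma,\max}$.

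Then I would pin down the quantitative statements. The value of $\tilde\gamma_2(0)$ follows by substituting the extremizing configuration for the two-particle branch into the Birman--Schwinger function and reading off when the associated scalar integral equals $1$: this gives $\tilde\gamma_2(0)=\big(\tfrac{1}{2(2\pi)^3}\int_{\T^3}\frac{(\cos p_1-\cos p_2)^2}{\epsilon(\vec\pi-p)}\,dp\big)^{-1}$, and the numerics $\approx 5.398489$ are just evaluation of this integral. The upper bound $\gamma_2(0)\le 2.93652$ comes from exhibiting one explicit trial function (or one explicit isotypic component) for which the Birman--Schwinger operator already has norm exceeding the threshold at that $\gamma$ — i.e. a one-sided variational estimate on the scalar function, needing only an upper/lower bound on a single integral. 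The localization of the eigenvalues — below $\sigma_{\ess}$ the triple eigenvalue $z$, in the gap the eigenvalues inside $(\tau_{\gamma,\max},\tau_{\gamma,\max}+3.5\gamma)$ or $(\tau_{\gamma,\max},\tau_{\gamma,\max}+\gamma)$ — follows from monotonicity of the Birman--Schwinger function in $z$ together with explicit bounds: since the perturbation strength scales like $\lambda$ but the ``width'' available is controlled by $\gamma$ through the dispersion of $\epsilon$, one shows the determinant cannot vanish once $z$ exceeds $\tau_{\gamma,\max}$ by more than a $\gamma$-multiple of an absolute constant (here $3.5$ and $1$), by bounding the off-threshold decay of $\int(\cE_{\gamma,0}-z)^{-1}\,dq$. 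Finally, uniqueness/exact multiplicity in part (b) of Theorem~\ref{teo:eiegn_H_below} — ``exactly one, triple'' — requires ruling out eigenvalues in the other isotypic sectors for $\lambda$ large: one checks that the scalar Birman--Schwinger functions of all representations other than the distinguished odd one stay below the critical level $1/\gamma$ on the relevant energy range, which again reduces to comparing explicit integrals.

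The main obstacle, I expect, is the precise control of the Birman--Schwinger function across the whole energy window as $\lambda\to\infty$: one must establish uniform-in-$\lambda$ two-sided estimates on the family of integral operators $\int(\cE_{\gamma,0}(p,q)+\lambda-w)^{-1}\,dq$ (and the convolution analogue), including their behavior as $w$ approaches the edges of $\sigma_{\ess}$ — where the integrand develops near-singularities governed by the local (quadratic) behavior of $\epsilon$ at its minimum/maximum and by the geometry of the level sets of $\cE_{\gamma,0}$. Handling the gap case is more delicate than the below-$\sigma_{\ess}$ case because there the relevant pole sits inside the range of the three-particle dispersion minus $\lambda$, so one genuinely needs the separation $\lambda/T_\lambda\to\infty$ and a careful splitting of $\T^3$ into a neighborhood of the critical manifold and its complement. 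Everything else — the octahedral decomposition, the identification of thresholds, the numerical evaluations — is, once this analytic control is in hand, comparatively routine.
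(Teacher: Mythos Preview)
Your plan is essentially the paper's own approach: Birman--Schwinger reduction (Theorem~\ref{teo:bsh_principle}), the even/odd splitting of $\cB_{\gamma,0}(z,\lambda)$ (Proposition~\ref{prop:z_in_gap} gives $\cB^e\ge0$, $\cB^o\le0$ in the gap, forcing eigenfunctions to be even), and extraction of a finite-rank principal part $\cB^{e,p}$ with residual controlled by $\lambda/|z|^2$ (Lemmas~\ref{lem:residual_B_e} and~\ref{lem:eiegn_proj_ep}). The representation-theoretic language you use (octahedral isotypic components) is exactly what underlies the paper's explicit computation: the principal part $\cP_\Delta\cB^{e,p}\cP_\Delta$ has one double eigenvalue built from $\cos p_1-\cos p_2$ and one simple eigenvalue built from $\cos p_1+\cos p_2+\cos p_3-3\bar\beta$; the limits as $\lambda\to\infty$ give $\gamma e_1(\alpha)$ and $\gamma e_3(\alpha)$, whence $\tilde\gamma_2(0)=1/e_1(0)$ and $\gamma_2(0)=1/\sup_\alpha e_3(\alpha)\le 1/e_3(0)\approx 2.93652$.

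Two points where your outline is slightly off. First, monotonicity in $z$ holds for the double eigenvalue $e_1=e_2$ but \emph{not} for $e_3$ (the paper stresses this); the localization to $(\tau_{\gamma,\max},\tau_{\gamma,\max}+3.5\gamma)$ therefore comes not from monotonicity but from the crude bound $e_3(\alpha)<7/(2\alpha)$, while the sharper interval $(\tau_{\gamma,\max},\tau_{\gamma,\max}+\gamma)$ for the double eigenvalue does use monotonicity via $\gamma e_1(\gamma)<1$. Second, because even functions need not lie in $\sH_\Delta$ (unlike odd ones), you must project: this is why the constant $\bar\beta$ appears in the third eigenvalue and why the analysis differs from the below-the-spectrum case. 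Both are details rather than gaps in your strategy.
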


\noindent 
Since $\gamma_2(0)<\gamma_1(0),$ bound states exist within the gap; however, there are no ground states.

\subsection{Some comments on the main results}

According to Theorems \ref{teo:exist_eigen_H}-\ref{teo:eiegn_H_gap} for small $\gamma$ and large couplings the associated $2+1$ fermionic system does not admit bound states. In other words, at strong couplings $2+1$ fermionic trimers exist if and only if the mass of the fermion is large enough than the mass of the third particle.

\subsection{A main strategy of proofs} 

The main tool in the analysis of the discrete spectrum of a Birman-Schwinger-type operator $\cB_{\gamma,K}(z,\lambda),$ which will be introduced in Section \ref{sec:bsh_operator_definos}. Namely, we reduce the eigenvalue problem for $H_{\gamma,\lambda}(K)$ to an eigenvalue problem for some compact operator. Similar method is well-known in the spectral theory of one-particle operators $\Delta - \lambda V:$ in this setting the associated compact operator is $V^{1/2}(\Delta -zI)^{-1}V^{1/2}$ (under assumption $V\ge0$). 

Unlike few-body case, the operator $\cB_{\gamma,K}(z,\lambda)$ is very complex and a priori we have no monotonicity in $z$ or $\lambda.$ However, at strong coupling (i.e. large $\lambda$) the behaviour of this operator can be handled: namely we can represent it as a sum of a finite rank integral operator (a principal part) and an compact operator (a residual part) with small norm (for large $\lambda$ and $|z|$). We can establish this by thoroughly analyzing  the eigenvalues of the Hamiltonian, associated to the system  of one fermion of third particle, which is also important in the study of HVZ theorem for $H_{\gamma,\lambda}(K),$ see Section \ref{sec:essential_spec}. 

In view of \eqref{tau_estotot} the largeness of $|z|$ is trivially obtained in the case $z$ below the essential spectrum. 
However, in the gap, the largeness of  $\lambda$ does not necessarily yield the largeness of $|z|,$ and hence, to extract the  residual part we need to assume some growth condition on it. One sufficient condition can be obtaining assuming $z$ diverges sublinearly to $-\infty$  (see assumption of Theorems \ref{teo:exist_eigen_H_gap} and \ref{teo:eiegn_H_gap}). However, we do not state existence or nonexistence of discrete spectrum near the three-particle branch of the essential spectrum.

In case $K=0,$ we can explicitly compute the invariant subspaces of the operator $\cB_{\gamma,0}(z,\lambda)$ and its principal parts, see Section \ref{sec:bsh_oper_for_large_lambda}.
Namely, it turns out that the subspaces of even and odd functions in $L^2(\T^3)$ are invariant subspaces of  $\cB_{\gamma,0}(z,\lambda),$ 
$$
\cB_{\gamma,0}(z,\lambda)\big|_{L^{2,e}(\T^3)}\le0,\qquad 
\cB_{\gamma,0}(z,\lambda)\big|_{L^{2,o}(\T^3)}\ge0\quad\text{if $z<\tau_{\gamma,\min}(0,\lambda)$}
$$
and 
$$
\cB_{\gamma,0}(z,\lambda)\big|_{L^{2,e}(\T^3)}\ge0,\qquad 
\cB_{\gamma,0}(z,\lambda)\big|_{L^{2,o}(\T^3)}\le0\quad\text{if $\tau_{\gamma,\max}(0,\lambda)<z<\cE_{\gamma,\min}(K).$}
$$
This allows to obtain some evenness or oddness properties of eigenfunctions of $\cB_{\gamma,0}(z,\lambda),$ and hence, that of $H_{\gamma,\lambda}(0).$ 
These subspaces are also invariant with respect to the principal parts $\cB_\gamma^p(z,\lambda)$ of $\cB_{\gamma,0}(z,\lambda).$ Now we will treat the cases if gap and below the essential spectrum separately. 
\begin{itemize}
\item If $z<\tau_{\gamma,\min}(0,\lambda),$ then $\cB_\gamma^p(z,\lambda)$ has a single positive eigenvalue of multiplicity three. Moreover, the eigenvalue is strictly increasing in $z,$ and for $z=\tau_{\gamma,\min}(0,\lambda),$ converges to $\gamma/\gamma_1(0)$ as $\lambda\to+\infty.$ 

\item If $z\in (\tau_{\gamma,\max}(0,\lambda),\tau_{\gamma,\max}(0,\lambda) + T_\lambda)$ for some function $\lambda\mapsto T_\lambda$ with sublinear growth, then $\cB_\gamma^p(z,\lambda)$ have two different eigenvalues one simple and one with multiplicity two; the multliplicity two eigenvalue is monotone in $z,$ and for $z=\tau_{\gamma,\max}(0,\lambda),$ as $\lambda\to+\infty$ converges to $\gamma/\tilde\gamma_2(0).$ We do not expect such a monotonicty for the  other eigenvalue, but still we show that its maximum in $z$ in the limit as $\lambda\to+\infty$ has asymptotics equal to $\gamma/\gamma_2(0).$ 
\end{itemize}

Now using the continuity of eigenvalues of $\cB_{\gamma,0}(z,\lambda)$ we construct $z$'s for which $\cB_{\gamma,0}(z,\lambda)$ either admits $1$ as an eigenvalue or has all eigenvalues less than $1.$ Then applying the Birman-Schwinger-type principle (Theorem \ref{teo:bsh_principle}) we deduce the assertions. We mention here that the lack of the monotonicity yield that while $\cB_{\gamma,0}(z,\lambda)$ for $z$ in gap has three eigenvalues greater than $1,$ the operator $H_{\gamma,\lambda}(0)$ has at least three eigenvalues in the gap and may have more than three eigenvalues there.

\subsection{The structure of the paper} 

The paper is organized as follows. In Section \ref{sec:essential_spec} we study the essential spectrum of $H_{\gamma,\lambda}(K)$ and the asymptotics of the unique eigenvalue of the effective discrete Schr\"odinger operators $h_{\gamma,\lambda}(K,q),$ associated to the system of one fermion and the third particle. The Birman-Schwinger-type operator $\cB_{\gamma,K}(z,\lambda)$ and its relation to $H_{\gamma,\lambda}(K)$ will be studied in Section \ref{sec:bsh_operator_definos}. The behaviour of $\cB_{\gamma,K}(z,\lambda)$ for large $\lambda$ and the proofs of the main results is included in Section \ref{sec:bsh_oper_for_large_lambda}. We conclude the paper with an appendix, where we provide a full proof of the HVZ theorem for $H_{\gamma,\lambda}(K)$ (Theorem \ref{teo:ess_spectrum_3}).

\subsection*{Acknowledgements}
The authors thank Prof. Robert Seiringer for his valuable suggestions and insightful comments. Sh. Kholmatov acknowledges
support from the Austrian Science Fund (FWF) Stand-Alone project P 33716.

\section{Essential spectrum. Spectral properties of two-particle operators}\label{sec:essential_spec}

\subsection{Channel operators}

Let us consider the channel operator 
\begin{equation}\label{channel_operator}
H_{\gamma,\lambda}^{(2)}(K):=H_{\gamma,0}(K) - \lambda V_{13}
\end{equation}
in $L^{2}((\T^3)^2),$ associated to the two-particle system, consisting of one fermion and one different particle. Since the summands of $H_{\gamma,\lambda}^{(2)}(K)$ commute with each member of the Abelian group $\{U_s\}_{s\in\Z^3}$ of multiplication operators in $L^2((\T^3)^2)$ by $q\mapsto e^{is\cdot q},$ both $H_{\gamma,\lambda}^{(2)}(K)$ and $L^2((\T^3)^2)$ are decomposed into the direct von Neumann integral 
\begin{equation}\label{decompos1e}
L^2((\T^3)^2) = \int_{q\in\T^3}^\oplus L^2(\T^3)\,dq, \quad H_{\gamma,\lambda}^{(2)} (K) = \int_{q\in\T^3}^\oplus h_{\gamma,\lambda}(K,q)\,dq,
\end{equation}
where in the two-particle fibers 
$$
h_{\gamma,\lambda}(K,q): = h_{\gamma,0}(K,q) - \lambda v
$$ 
the free operator $h_{\gamma,0}(K,q)$ is the multiplication in $L^2(\T^3)$ by $\cE_{\gamma,K}(\cdot,q)$ and $v$ is the rank-one integral operator, defined as 
$$
vf(p) = \frac{1}{(2\pi)^{3}}\int_{\T^3} f(s)\,ds,\quad f\in  L^2(\T^3).
$$
Since $q\mapsto h_{\gamma,\lambda}(K,q)$ is analytic family,
\begin{equation}\label{hsz172}
\sigma(H_{\gamma,\lambda}^{(2)}(K)) = \sigma_\ess(H_{\gamma,\lambda}^{(2)}(K)) = \bigcup_{q\in\T^3}\,\sigma(h_{\gamma,\lambda}(K,q)).
\end{equation}
Moreover, by the classical Weyl's theorem, 
$$
\sigma_\ess(h_{\gamma,\lambda}(K,q)) = \sigma(h_{\gamma,0}(K,q)) = [\cE_{\gamma,\min}(K,q),\cE_{\gamma,\max}(K,q)]\quad\text{for all $\lambda\ge0.$}
$$
Here 
\begin{equation*}%\label{one_part_ess0}
\cE_{\gamma,\min}(K,q) = \min_p\,\cE_{\gamma,K}(p,q),\quad \cE_{\gamma,\max}(K,q) = \max_p\,\cE_{\gamma,K}(p,q).
\end{equation*}
Moreover, as $h_{\gamma,\lambda}(K,q)$ is a nonpositive rank-one perturbation of $h_{\gamma,0}(K,q),$ it admits at most one eigenvalue $z_{\gamma,\lambda}(K,q)$  outside the essential spectrum, which lies below the essential spectrum (if exists). Moreover, $z_{\gamma,\lambda}(K,q)$ is the unique zero of the Fredholm determinant
\begin{equation}\label{fredholm_determ}
\Delta_{\gamma,K}(q,z,\lambda):= 1 - \frac{\lambda}{(2\pi)^3}\int_{\T^3}\frac{dp}{\cE_{\gamma,K}(p,q) - z},\quad z\le \cE_{\gamma,\min}(K,q),
\end{equation}
which exists if and only if  
\begin{equation}\label{ahszeur6}
\lambda>\Big(\frac{1}{(2\pi)^3}\int_{\T^3} \frac{dp}{\cE_{\gamma,K}(p,q) - \cE_{\gamma,\min}(K,q)}\Big)^{-1}\in [0,+\infty).
\end{equation}
Note that 
$$
\bigcup_{q\in\T^3}\,\sigma(h_{\gamma,0}(K,q)) = [\cE_{\gamma,\min}(K),\cE_{\gamma,\max}(K)].
$$
Thus, \eqref{hsz172} is represented as
\begin{equation}\label{shdzu67ec}
\sigma(H_{\gamma,\lambda}^{(2)}(K)) = \sigma_\ess(H_{\gamma,\lambda}^{(2)}(K)) = [\cE_{\gamma,\min}(K),\cE_{\gamma,\max}(K)]\cup \bigcup_{q\in\T^3}\,\{z_{\gamma,\lambda}(K,q)\},
\end{equation}
where we set $z_{\gamma,\lambda}(K,q):=\cE_{\gamma,\min}(K,q)$ if $\lambda$ does not satisfy \eqref{ahszeur6}.

\begin{remark}\label{rem:min_max_principle}
Since $z_{\gamma,\lambda}(K,q) $ is the lower bound of $h_{\gamma,\lambda}(K,q),$ by the minmax principle  
\begin{equation}\label{lower_bound_hKk}
z_{\gamma,\lambda}(K,q):= \inf_{\|f\|_{L^2}=1} (h_{\gamma,\lambda}(K,q)f,f)_{L^2} = 
\inf_{\|f\|_{L^2}=1} \Big((h_{\gamma,0}(K,q)f,f)_{L^2} - \lambda(vf,f)_{L^2}\Big).
\end{equation}
In particular, using $\|v\| = \sup_{\|f\|=1} (vf,f)_{L^2} = 1$ we find 
\begin{equation}\label{minmax_z_gam_lam}
\cE_{\gamma,\min}(K,q) - \lambda \le z_{\gamma,\lambda}(K,q) \le \cE_{\gamma,\max}(K,q) - \lambda. 
\end{equation}
\end{remark}

\subsection{Some comments on the spectrum of $h_{\gamma,\lambda}(K,q)$}

In this section we study some important asymptotics of the eigenvalues of $h_{\gamma,\lambda}(K,q).$ 

For $\gamma,\lambda>0$ and  $q\in\T^3$ let 
$$
\hat h_{\gamma,\lambda}(q)f(p):=
\Big(\epsilon(p) + \gamma \epsilon(q-p)\Big)f(p) - \frac{\lambda}{(2\pi)^3}\int_{\T^3} f(s)ds,\quad f\in L^2(\T^3).
$$
Note that 
\begin{equation}\label{ersiaosdx}
h_{\gamma,\lambda}(K,q) = \hat h_{\gamma,\lambda}(K-q) + \epsilon(q)I.
\end{equation}
As 
\begin{align*}
\hat\epsilon(p,q):=\epsilon(p) + \gamma \epsilon(q-p) = & \sum_{i=1}^3 \Big(1 + \gamma - \cos p_i-\gamma\cos(q_i-p_i)\Big)\\
= & \sum_{i=1}^3 \Big(1+\gamma - \sqrt{1+\gamma^2+2\gamma\cos q_i}\cos(p_i-\zeta(q_i))\Big),
\end{align*}
where for the moment we omit the dependence on $\gamma$ and  $\zeta(p_i)$ is such that 
\begin{align*}
\sqrt{1+\gamma^2+2\gamma\cos q_i} \cos \zeta(q_i) = 1+\gamma \cos q_i, \quad 
\sqrt{1+\gamma^2+2\gamma\cos q_i} \sin \zeta(q_i) = \gamma \sin q_i.
\end{align*}
Thus, by the Weyl's theorem 
$$
\sigma_\ess(\hat h_{\gamma,\lambda}(q)) = [\hat\epsilon_{\min}(q),\hat\epsilon_{\max}(q)],
$$
where 
\begin{align*}
\hat\epsilon_{\min}(q) = & \min_p\hat\epsilon(p,q) = \sum_{i=1}^3 (1 + \gamma - \sqrt{1+\gamma^2+2\gamma\cos  q_i} ), \\
\hat\epsilon_{\max}(q) = & \max_p\hat\epsilon(p,q) = \sum_{i=1}^3 (1 + \gamma + \sqrt{1+\gamma^2+2\gamma\cos  q_i} ), 
\end{align*}
and by the min-max principle, there is at most one eigenvalue $\hat z_{\gamma,\lambda}(q)<\hat\epsilon_{\min}(q).$  We set $\hat z_{\gamma,\lambda}(q)=\hat\epsilon_{\min}(q)$ if the eigenvalue does not exist so that 
\begin{equation}\label{shri_lanka_gutersh1}
\hat z_{\gamma,\lambda}(q) = \inf\,\sigma(\hat h_{\gamma,\lambda}(q)).
\end{equation}
Moreover, if $\hat z_{\gamma,\lambda}(q)<\hat\epsilon_{\min}(q),$ then $z=\hat z_{\gamma,\lambda}(q)$ is the unique zero of the Fredholm determinant 
\begin{align*}
\hat \Delta(q,z):= & 
1-\frac{\lambda}{(2\pi)^3} \int_{\T^3} \frac{dq}{\hat \epsilon(p,q)-z} \\
= & 1 - \frac{\lambda}{(2\pi)^3} \int_{\T^3} \frac{dp}{\sum_{i=1}^3 (1 + \gamma - \sqrt{1+\gamma^2+2\gamma\cos q_i} \cos p_i) -  z},\quad  z\le \hat\epsilon_{\min}(q).
\end{align*}

\begin{lemma}[Properties of $\hat z_{\gamma,\lambda}$]\label{lem:prop_z_gamlam}
For $\gamma,\lambda>0$ and $q\in\T^3$ let $\hat z_{\gamma,\lambda}(q)$ be given by \eqref{shri_lanka_gutersh1}. 
Then 
\begin{equation}\label{zgamlam_est_0pie}
-\lambda \le \hat z_{\gamma,\lambda}(0) 
\le \hat z_{\gamma,\lambda}(q) \le \hat z_{\gamma,\lambda}(\vec\pi) \le -\lambda + 3(1+\gamma+|1-\gamma|).
\end{equation}
%
%and 
%\begin{equation}\label{baho_z_gl_0pi}
%z_{\gamma,\lambda}(0) +\epsilon(q) \le  z_{\gamma,\lambda}(q) \le z_{\gamma,\lambda}(0) +\epsilon(q) + \sum_{i=1}^3 (1 + \gamma - \sqrt{1+\gamma^2+2\gamma\cos q_i} ) .
%\end{equation}
Moreover, if $\lambda > 3(1+\gamma),$ then 
\begin{equation}\label{nice_estoa0s}
-\lambda + 3(1+\gamma) %+ \epsilon(q) 
- \frac{9(1+\gamma)^2}{\lambda} < \hat z_{\gamma,\lambda}(q) <  -\lambda + 3(1+\gamma) %+\epsilon(q)
,\quad q\in\T^3.
\end{equation}
\end{lemma}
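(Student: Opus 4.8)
The plan is to estimate the infimum $\hat z_{\gamma,\lambda}(q) = \inf\sigma(\hat h_{\gamma,\lambda}(q))$ both by the variational (min–max) characterization and, in the regime $\lambda > 3(1+\gamma)$, by analyzing the Fredholm determinant $\hat\Delta(q,z)$. For the two-sided bound \eqref{zgamlam_est_0pie}, I would first note that $\hat z_{\gamma,\lambda}(q) = \inf_{\|f\|=1}\big[(\hat\epsilon(\cdot,q)f,f) - \lambda(vf,f)\big]$. Taking the constant test function $f\equiv (2\pi)^{-3/2}$ gives $(vf,f)=1$, and $(\hat\epsilon(\cdot,q)f,f) = (2\pi)^{-3}\int_{\T^3}\hat\epsilon(p,q)\,dp = 3(1+\gamma)$ since the cosine terms integrate to zero; hence $\hat z_{\gamma,\lambda}(q)\le -\lambda + 3(1+\gamma)$, which in particular is $\le -\lambda + 3(1+\gamma+|1-\gamma|)$. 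For the lower bound, since $(vf,f)\le \|v\|=1$ and $\hat\epsilon(p,q)\ge 0$ (as each summand $1+\gamma - \sqrt{1+\gamma^2+2\gamma\cos q_i}\cos(p_i-\zeta(q_i))\ge 1+\gamma-\sqrt{1+\gamma^2+2\gamma\cos q_i}\ge 0$), we get $\hat z_{\gamma,\lambda}(q)\ge -\lambda$. For the monotonicity $\hat z_{\gamma,\lambda}(0)\le\hat z_{\gamma,\lambda}(q)\le\hat z_{\gamma,\lambda}(\vec\pi)$: from \eqref{shri_lanka_gutersh1} and the diagonalized form of $\hat\epsilon$, the operator $\hat h_{\gamma,\lambda}(q)$ is unitarily equivalent (via the shift $p_i\mapsto p_i+\zeta(q_i)$, which leaves the constant function and hence $v$ invariant) to multiplication by $\sum_i(1+\gamma-\sqrt{1+\gamma^2+2\gamma\cos q_i}\cos p_i)$ minus $\lambda v$; since $q\mapsto 1+\gamma^2+2\gamma\cos q_i$ is decreasing in $q_i\in[0,\pi]$ coordinatewise, the symbol is pointwise monotone nondecreasing as $\cos q_i$ decreases, so the quadratic form — and thus its infimum — is monotone, giving the chain of inequalities. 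Evaluating at $q=\vec\pi$ where $\sqrt{1+\gamma^2+2\gamma\cos\pi}=|1-\gamma|$ and using the constant test function as above yields $\hat z_{\gamma,\lambda}(\vec\pi)\le -\lambda+3(1+\gamma+|1-\gamma|)$, though one can also just invoke \eqref{minmax_z_gam_lam}-type bounds directly.

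For the sharper estimate \eqref{nice_estoa0s} under $\lambda > 3(1+\gamma)$: the upper bound $\hat z_{\gamma,\lambda}(q) < -\lambda + 3(1+\gamma)$ I would get by showing the eigenvalue actually exists (i.e. condition \eqref{ahszeur6} holds: indeed $\big(\tfrac{1}{(2\pi)^3}\int \tfrac{dp}{\hat\epsilon(p,q)-\hat\epsilon_{\min}(q)}\big)^{-1}$ is finite and, I claim, $<\lambda$ in this regime), and that the Fredholm determinant $\hat\Delta(q,z)$, which is strictly increasing in $z$ on $(-\infty,\hat\epsilon_{\min}(q))$, satisfies $\hat\Delta(q, -\lambda+3(1+\gamma)) > 0$. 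To see the latter, set $z = -\lambda + 3(1+\gamma)$ and observe $\hat\epsilon(p,q) - z = \lambda - \sum_{i=1}^3\sqrt{1+\gamma^2+2\gamma\cos q_i}\cos p_i = \lambda - a(q,p)$ where $|a(q,p)|\le \sum_i\sqrt{1+\gamma^2+2\gamma\cos q_i}\le 3(1+\gamma) < \lambda$; so the denominator is positive and bounded below by $\lambda - 3(1+\gamma) > 0$. Then $\tfrac{1}{(2\pi)^3}\int \tfrac{dp}{\lambda - a(q,p)}$: using the mean-zero property $\tfrac{1}{(2\pi)^3}\int a(q,p)\,dp = 0$ together with convexity of $t\mapsto \tfrac{1}{\lambda - t}$ (Jensen) gives $\tfrac{1}{(2\pi)^3}\int\tfrac{dp}{\lambda - a}\ge \tfrac{1}{\lambda}$ — wrong direction — so instead I would bound it above: $\tfrac{1}{\lambda - a} = \tfrac{1}{\lambda}\cdot\tfrac{1}{1 - a/\lambda} = \tfrac{1}{\lambda}\big(1 + \tfrac{a}{\lambda} + \tfrac{a^2}{\lambda^2(1-a/\lambda)}\big)$, integrate, use the mean-zero cancellation of the linear term, and bound $\tfrac{1}{1-a/\lambda}\le \tfrac{\lambda}{\lambda - 3(1+\gamma)}$ and $\tfrac{1}{(2\pi)^3}\int a^2\,dp = \tfrac12\sum_i(1+\gamma^2+2\gamma\cos q_i)\le \tfrac32(1+\gamma)^2$, yielding $\lambda\cdot\tfrac{1}{(2\pi)^3}\int\tfrac{dp}{\lambda-a} \le 1 + \tfrac{(3/2)(1+\gamma)^2}{\lambda(\lambda - 3(1+\gamma))}\cdot$(hmm). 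A cleaner route: just show $\lambda\,\tfrac{1}{(2\pi)^3}\int\tfrac{dp}{\hat\epsilon(p,q)-z} < 1$ at $z=-\lambda+3(1+\gamma)$ using the lower bound $\hat\epsilon(p,q) - z \ge \lambda - 3(1+\gamma)$ is too crude; the expansion with cancellation of the first moment is what makes it work, and I would carry it to second order. For the lower bound in \eqref{nice_estoa0s}, evaluate $\hat\Delta(q,z)$ at $z = -\lambda + 3(1+\gamma) - \tfrac{9(1+\gamma)^2}{\lambda}$ and show it is $<0$, i.e. $\lambda\,\tfrac{1}{(2\pi)^3}\int\tfrac{dp}{\hat\epsilon(p,q)-z} > 1$; here $\hat\epsilon(p,q) - z = \lambda + \tfrac{9(1+\gamma)^2}{\lambda} - a(q,p)$, and since $\tfrac1{\lambda - t}$ is convex in $t$, Jensen applied to the mean-zero perturbation gives $\tfrac{1}{(2\pi)^3}\int\tfrac{dp}{\lambda + 9(1+\gamma)^2/\lambda - a} \ge \tfrac{1}{\lambda + 9(1+\gamma)^2/\lambda}$, and one checks $\tfrac{\lambda}{\lambda + 9(1+\gamma)^2/\lambda} = \tfrac{\lambda^2}{\lambda^2 + 9(1+\gamma)^2} $; this is $<1$, the wrong direction again — so for the lower bound I should instead use strict convexity to get a strict improvement over Jensen, quantified by the variance $\tfrac{1}{(2\pi)^3}\int a^2 = \tfrac12\sum_i(1+\gamma^2+2\gamma\cos q_i)$, via the elementary inequality $\tfrac{1}{\lambda - a} \ge \tfrac{1}{\lambda} + \tfrac{a}{\lambda^2} + \tfrac{a^2}{\lambda^3}$ valid for $|a|<\lambda$ (since $\tfrac1{1-x}\ge 1+x+x^2$ for $|x|<1$). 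Integrating and using first-moment cancellation: $\tfrac{1}{(2\pi)^3}\int\tfrac{dp}{\lambda' - a} \ge \tfrac{1}{\lambda'} + \tfrac{1}{\lambda'^3}\cdot\tfrac12\sum_i(1+\gamma^2+2\gamma\cos q_i)$ with $\lambda' = \lambda + 9(1+\gamma)^2/\lambda$; multiplying by $\lambda$ and comparing with $1$ gives the claim after bounding $\tfrac12\sum_i(\cdots)$ and simplifying.

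I expect the \textbf{main obstacle} to be making the two inequalities in \eqref{nice_estoa0s} come out with exactly the stated constants — in particular verifying that the second-order remainder in the expansion of $\tfrac{1}{(2\pi)^3}\int\tfrac{dp}{\hat\epsilon(p,q)-z}$ is controlled by $\tfrac{9(1+\gamma)^2}{\lambda}$ uniformly in $q\in\T^3$. The key structural facts that make everything go through are: (i) $\hat\epsilon(p,q)\ge 0$ with mean $3(1+\gamma)$ over $p$; (ii) the ``fluctuation'' $a(q,p) = \sum_i\sqrt{1+\gamma^2+2\gamma\cos q_i}\,\cos p_i$ has zero $p$-mean (killing the first-order term) and $p$-variance $\tfrac12\sum_i(1+\gamma^2+2\gamma\cos q_i)\le\tfrac32(1+\gamma)^2$; (iii) $|a(q,p)|\le 3(1+\gamma) < \lambda$, so all denominators are safely positive and the Neumann-type expansion of $(\lambda - a)^{-1}$ converges. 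With these in hand the estimates reduce to the elementary two-sided bound $1 + x + x^2 \le \tfrac{1}{1-x}$ (lower) and $\tfrac{1}{1-x}\le 1 + x + \tfrac{x^2}{1-|x|_\infty}$-type control (upper), applied with $x = a/\lambda'$, and then plugging in the variance bound. I would organize the write-up as: Step 1, the crude bounds \eqref{zgamlam_est_0pie} via test functions and monotonicity of the symbol in $\cos q_i$; Step 2, existence of the eigenvalue and strict monotonicity of $\hat\Delta(q,\cdot)$; Step 3, the upper bound in \eqref{nice_estoa0s} from $\hat\Delta(q,-\lambda+3(1+\gamma))>0$; Step 4, the lower bound from $\hat\Delta\big(q,-\lambda+3(1+\gamma)-\tfrac{9(1+\gamma)^2}{\lambda}\big)<0$.
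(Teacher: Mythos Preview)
Your overall strategy is right, but there are two genuine errors that make the argument as written fail.

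First, for the monotonicity $\hat z(0)\le\hat z(q)\le\hat z(\vec\pi)$: after your shift the symbol is $\sum_i(1+\gamma - b_i(q_i)\cos p_i)$ with $b_i(q_i)=\sqrt{1+\gamma^2+2\gamma\cos q_i}$, and this is \emph{not} pointwise monotone in $b_i$ --- the term $-b_i\cos p_i$ moves up where $\cos p_i<0$ and down where $\cos p_i>0$. So the quadratic form is not pointwise ordered in $q$, and the inference ``symbol monotone $\Rightarrow$ infimum monotone'' breaks. The paper proves the monotonicity at the level of the Fredholm determinant instead: folding the $p_1$-integral via $p_1\mapsto\pi-p_1$ gives
\[
\int_{-\pi}^\pi\frac{dp_1}{A-b_1\cos p_1}=\int_0^{\pi/2}\frac{4A\,dp_1}{A^2-b_1^2\cos^2 p_1},
\]
so the dependence on $b_1$ enters only through $b_1^2\cos^2 p_1$, which \emph{is} monotone; the implicit-function argument then transfers this to $\hat z$.

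Second, $\hat\Delta(q,\cdot)$ is strictly \emph{decreasing} in $z$ (since $z\mapsto(\hat\epsilon-z)^{-1}$ is increasing), not increasing as you state. This flips the roles of your two test-point calculations. To get $\hat z(q)<3(1+\gamma)-\lambda$ you need $\hat\Delta(q,3(1+\gamma)-\lambda)<0$, i.e.\ $\tfrac{\lambda}{(2\pi)^3}\int\frac{dp}{\lambda-a(q,p)}>1$; this is exactly your Jensen computation $\tfrac{1}{(2\pi)^3}\int\frac{dp}{\lambda-a}>\tfrac{1}{\lambda}$ (strict since $a\not\equiv 0$) --- the step you discarded as ``wrong direction''. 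Conversely, for the lower bound you must show $\hat\Delta>0$ at $z=3(1+\gamma)-\lambda-\tfrac{9(1+\gamma)^2}{\lambda}$, which requires an \emph{upper} bound on the integral. The paper writes
\[
\hat\Delta(q,z)=1-\frac{\lambda}{3(1+\gamma)-z}\Big(1+\sum_{m\ge1}\frac{1}{(2\pi)^3}\int_{\T^3}\xi_q(p)^{2m}\,dp\Big),\qquad \xi_q(p)=\frac{a(q,p)}{3(1+\gamma)-z},
\]
bounds $|\xi_q|\le\tfrac{3(1+\gamma)}{3(1+\gamma)-z}$, sums the resulting geometric series exactly, and then evaluates at $3(1+\gamma)-z=\lambda+\tfrac{9(1+\gamma)^2}{\lambda}$; this is precisely where the constant $9(1+\gamma)^2$ comes from. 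Your two-term Taylor expansion with remainder can also be made to work once the signs are straightened out, but it is the upper estimate on the integral (not the lower) that is needed there.
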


\begin{proof}
For shortness, let us omit the dependence of $\hat z_{\gamma,\lambda}$ on $\gamma,\lambda.$ 
As the maps $q\mapsto \hat\Delta(q,z)$ and $q\mapsto \hat\epsilon_{\min}(q)$ are symmetric with respect
to coordinate permutations of $q=(q_1,q_2,q_3)$ and the maps $q_i\mapsto \hat\Delta(q,z)$ and $q_i\mapsto \hat\epsilon_{\min}(q)$ are even, by definition of $\hat z,$ the map  $q\mapsto \hat z(q)$ is symmetric and $q_i\mapsto \hat z(q)$ is even. 

We claim that for each $i=1,2,3,$ the map  $q_i\mapsto \hat z(q)$ is increasing in $[0,\pi].$ Indeed, for simplicity, assume that $i=1.$ Clearly, $q_1\mapsto \hat\epsilon_{\min}(q)$ is strictly increasing. Moreover, let us show that $q_1\mapsto \hat \Delta(q,z)$ strictly increases in $[0,\pi].$  Write the integral in $p_1$ as
\begin{multline*}
\int_{\T}\frac{dp_1}{A - b\cos p_1} = 2\int_0^\pi \frac{dp_1}{A - b\cos p_1} \\
= 2\int_0^{\pi/2} \frac{dp_1}{A - b\cos p_1} + 2\int_0^{\pi/2}\frac{dp_1}{A + b\cos p_1} = \int_0^{\pi/2} \frac{4A\,dp_1}{A^2-b^2\cos^2 p_1},
\end{multline*}
where the integral over $[\pi/2,\pi]$ is reduced to $[0,\pi/2]$ via the change of variables $p_1\mapsto \pi-p_1,$ 
$$
A:=(1+\gamma) + \sum_{i=2}^3 (1 + \gamma - \sqrt{1+\gamma^2+2\gamma\cos q_i} \cos p_i) - z
$$
and 
$$
b:=\sqrt{1+\gamma^2+2\gamma\cos q_1} .
$$
Since $A$ is independent of $q_1$ and $b,$ as a function of $q_1,$ strictly decreases in $[0,\pi],$ so  is $q_1\mapsto \int_{\T}\frac{dp_1}{A^2 - b^2\cos^2 p_1}.$ Thus, $q_1\mapsto \hat \Delta(q,z)$ is strictly increasing. 

Now assume that $\hat z(q^0)<\hat\epsilon_{\min}(q^0)$ for some $q^0=(q_1 ^0, q_2^0, q_3^0)\in\T^3$ with $q_1^0\in [0,\pi].$ By the continuity of $\hat z,$ we have  $\hat z(q)<\hat\epsilon_{\min}(q)$ and $\hat\Delta(q,\hat z(q))=0$ for all $q$ in a small neighrborhood $N_{q_1^0}\times N_{q_2^0}\times N_{q_3}^0\subset\T^3$ of $q^0.$ Therefore, by the inverse function theorem in monotone case, $q_1\mapsto \hat z(q)$ is strictly increasing  in $N_{q_1^0}\cap [0,\pi].$

Since the map $q_1\mapsto \hat z(q)$ is locally increasing in $[0,\pi]$ near each $q\in \T^3$ with $q_1\in[0,\pi],$ the claim follows.

Since $\hat z$ is symmetric and even in each coordinates, this claim implies
\begin{equation}\label{axborot_tashkilot1}
\hat z(0) \le \hat z(q) \le \hat z(\vec \pi)\quad\text{for all $q\in\T^3$}.
\end{equation}
Moreover, by the minmax principle
$$
-\lambda=\hat\epsilon_{\min}(0)-\lambda \le \hat z(0) \quad\text{and}\quad  
\hat z(\vec\pi) \le \hat\epsilon_{\max}(\vec\pi)-\lambda = 3(1+\gamma+|1-\gamma|)-\lambda,
$$
and hence, combining these inequalities with \eqref{axborot_tashkilot1} we obtain \eqref{zgamlam_est_0pie}.

Finally, we prove \eqref{nice_estoa0s}.
For any  $z<0$ let us represent $\hat\Delta$ as  
$$
\hat \Delta(q,z) = 1- \frac{\lambda}{(2\pi)^3} \frac{1}{3(1+\gamma) - z} \int_{\T^3}\frac{dp}{ 1 - \xi_q(p)},
$$
where 
$$
\xi_q(p) = \sum_{i=1}^3 \frac{\sqrt{1+\gamma^2 + 2\gamma \cos q_i}}{3(1+\gamma) - z}\,\cos p_i.
$$
As $\|\xi_q\|_\infty<1$ and 
$$
\int_{\T^3} \xi_q(p)^{2m-1}dp = 0,\quad m\ge1,
$$
it follows that 
\begin{equation}
\hat \Delta(q,z) = 1 - \frac{\lambda}{3(1+\gamma) - z} \Big(1 + \frac{1}{(2\pi)^3}\sum_{m\ge1} \int_{\T^3}\xi_q(p)^{2m}dp \Big).
\end{equation}
From this inequality it follows that if $\lambda>3(1+\gamma),$ then
$\hat\Delta(q,3(1+\gamma)-\lambda) < 0.
$
Since we have also $\hat\Delta(q,z)\to1$ as $z\to-\infty,$ by the strict decreasing of $\hat \Delta(q,\cdot)$ in $(-\infty,\hat\epsilon(q)),$ the equation $\hat\Delta(q,z)=0$ has a unique solution $z=\hat z(q).$ Then 
$$
\hat\Delta(q,3(1+\gamma)-\lambda) < 0 =  \hat\Delta(q, \hat z(q)),
$$
and thus, by the strict decreasing of $z\mapsto \hat\Delta(q,z)$ in $(-\infty,0)$ we deduce 
\begin{equation*}%\label{upper_estos_hateps12}
\hat z(q)<3(1+\gamma)-\lambda.
\end{equation*}
Next, as $|\xi_q(p)| \le \frac{3(1+\gamma)}{3(1+\gamma) - z}$ 
\begin{align*}
\hat\Delta(q,z)  \ge & 1 - \frac{\lambda}{3(1+\gamma) - z} \Big(1 + \sum_{m\ge1} \Big( \frac{3(1+\gamma)}{3(1+\gamma) - z}\Big)^{2m}\Big) \\
= & 1- \frac{\lambda}{3(1+\gamma) - z} \,\frac{(3(1+\gamma)-z)^2}{(3(1+\gamma)-z)^2 - 9(1+\gamma)^2}
\end{align*}
Now if $\lambda>3(1+\gamma),$ then 
$$
\hat\Delta\Big(q, 3(1+\gamma)- \lambda - \frac{9(1+\gamma)^2}{\lambda}\Big) =\frac{81(1+\gamma)^4\lambda^{-2}}{\lambda^2+9(1+\gamma)^2+81(1+\gamma)^4\lambda^{-2}}>0 = \hat\Delta(q, \hat z(q)).
$$
Thus, 
\begin{equation*}%\label{lower_esto_hateps219}
 \hat z(q) > 3(1+\gamma)- \lambda - \frac{9(1+\gamma)^2}{\lambda}.
\end{equation*}
\end{proof}

\begin{remark}\label{rem:eigen_bor_all_lam}
Let $\gamma>0,$ $\lambda>0$ satisfy
\begin{equation*}%\label{eq:smallest_lambda}
\lambda > (1+\gamma)\Big(\frac{1}{(2\pi)^3} \int_{\T^3} \frac{dp}{\epsilon (p)}
\Big)^{-1}.
\end{equation*}
Then for all $q\in\T^3,$ the function $\hat \Delta(q,\cdot)$ has a unique zero $\hat z_{\gamma,\lambda}(q)<\hat\epsilon_{\min}(q).$ Indeed, $\hat\Delta(q,z)=0$ admits a solution $z<\hat\epsilon_{\min}(q)$ if and only if 
\begin{align*}
\lambda > & \Big(\frac{1}{(2\pi)^3}\int_{\T^3} \frac{dp}{\sum_{i=1}^3 (1 + \gamma - \sqrt{1+\gamma^2+2\gamma\cos q_i} \cos p_i) - \hat\epsilon_{\min}(q)}\Big)^{-1} \\
= & \Big(\frac{1}{(2\pi)^3}\int_{\T^3} \frac{dp}{\sum_{i=1}^3 \sqrt{1+\gamma^2+2\gamma\cos q_i} (1-\cos p_i) }\Big)^{-1}.
\end{align*}
The last integral as a function of $q$ admits its minimum at $q=0,$ and thus, if 
$$
\lambda >\Big(\frac{1}{(2\pi)^3}\int_{\T^3} \frac{dp}{\sum_{i=1}^3 \sqrt{1+\gamma^2+2\gamma} (1-\cos p_i) }\Big)^{-1} = (1+\gamma) \Big(\frac{1}{(2\pi)^3}\int_{\T^3}\frac{dp}{\epsilon(p)}\Big)^{-1},
$$
then  for all $q$ the equation $\hat \Delta(q,\cdot)=0$ (and hence, $\Delta_{\gamma}(q,\cdot,\lambda)=0$) has a unique zero.
\end{remark}

\begin{remark}\label{rem:eigen_expression912}
Using an asymptotic analysis for the equation $\hat\Delta(q,\hat z_{\gamma,\lambda}(q))=0$ we can establish the asymptotics 
\begin{multline}\label{asymptotic}
\hat z_{\gamma,\lambda}(q) = -\lambda + 3(1+\gamma) - \frac{1}{2\lambda}\sum_{i=1}^3 (1+\gamma^2+2\gamma\cos q_i) \\
- \frac{1}{8\lambda^3} \Big(3\sum_{i=1}^3 (1+\gamma^2+2\gamma\cos q_i)^2 + 
2\Big(\sum_{i=1}^3(1+\gamma^2 + 2\gamma\cos q_i)\Big)^2\Big) 
+O\Big(\frac{1}{\lambda^5}\Big).
\end{multline}
\end{remark}

Note that in view of \eqref{ersiaosdx} 
\begin{equation}\label{w26tza1}
z_{\gamma,\lambda}(K,q) = \hat z_{\gamma,\lambda}(K-q)+\epsilon(q).
\end{equation}
Therefore, from Lemma \ref{lem:prop_z_gamlam} we get the following corollary.

\begin{corollary}\label{cor:estimos_two_pasrtos}
For $\lambda,\gamma>0$ and $K,q\in\T^3$ let $z_{\gamma,\lambda}(K,q)$ be given by  \eqref{lower_bound_hKk}. Then 
\begin{multline}
-\lambda \le z_{\gamma,\lambda}(K,K)-\epsilon(K) \le z_{\gamma,\lambda}(K,q)-\epsilon(q) \\
\le z_{\gamma,\lambda}(K,\vec\pi-K)-\epsilon(\vec\pi-K) \le -\lambda+3(1+\gamma+|1-\gamma|).
\end{multline}
In particular, when $K=0,$
\begin{equation}\label{baho_z_gl_0pi}
-\lambda \le z_{\gamma,\lambda}(0,0)\le z_{\gamma,\lambda}(0,q)-\epsilon(q) \\
\le z_{\gamma,\lambda}(0,\vec\pi)-\epsilon(\vec\pi) \le -\lambda+3(1+\gamma+|1-\gamma|). 
\end{equation}
Moreover, if $\lambda>3(1+\gamma),$ then 
\begin{equation}\label{asymp_zKp5e32}
\epsilon( q) - \lambda + 3(1+\gamma) - \frac{9(1+\gamma)^2}{\lambda} \le z_{\gamma,\lambda}(K,q) \le  \epsilon( q) - \lambda + 3(1+\gamma).
\end{equation}
\end{corollary}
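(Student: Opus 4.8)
The whole statement is nothing but Lemma \ref{lem:prop_z_gamlam} transported from the two‑body fibre $\hat h_{\gamma,\lambda}$ to $h_{\gamma,\lambda}(K,q)$ via the reduction \eqref{ersiaosdx}, whose spectral consequence \eqref{w26tza1} I would rewrite as
\begin{equation*}
z_{\gamma,\lambda}(K,q)-\epsilon(q)=\hat z_{\gamma,\lambda}(K-q),\qquad K,q\in\T^3 .
\end{equation*}
So the plan is: fix $K$ and let $q$ run over $\T^3$; then the argument $K-q$ runs over all of $\T^3$, and the three‑term chain is read off from \eqref{zgamlam_est_0pie} applied with argument $K-q$, namely $-\lambda\le\hat z_{\gamma,\lambda}(0)\le\hat z_{\gamma,\lambda}(K-q)\le\hat z_{\gamma,\lambda}(\vec\pi)\le-\lambda+3(1+\gamma+|1-\gamma|)$.

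It then only remains to recognise the two extreme values as $z_{\gamma,\lambda}(K,\cdot)-\epsilon(\cdot)$ evaluated at the right points. Choosing $q=K$ gives $\hat z_{\gamma,\lambda}(0)=z_{\gamma,\lambda}(K,K)-\epsilon(K)$, which by \eqref{zgamlam_est_0pie} both equals $-\lambda$ from below in the worst case and is a lower bound for $\hat z_{\gamma,\lambda}(K-q)$ for every $q$. For the maximal value one takes $q$ so that $K-q$ realises the (unique, by the monotonicity in each $|q_i|$ proved in Lemma \ref{lem:prop_z_gamlam}) maximiser $\vec\pi$ of $\hat z_{\gamma,\lambda}$; since $\epsilon$ and $\hat z_{\gamma,\lambda}$ are even in each coordinate, this extremal point may be recorded in the symmetric form $\vec\pi-K$, giving $\hat z_{\gamma,\lambda}(\vec\pi)=z_{\gamma,\lambda}(K,\vec\pi-K)-\epsilon(\vec\pi-K)$, an upper bound for all $q$. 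Assembling the four inequalities yields the first displayed chain; specialising to $K=0$, where $\vec\pi-K=\vec\pi$, gives \eqref{baho_z_gl_0pi} verbatim.

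Finally, for \eqref{asymp_zKp5e32} I would invoke the refined two‑sided bound \eqref{nice_estoa0s} of Lemma \ref{lem:prop_z_gamlam}, valid when $\lambda>3(1+\gamma)$: it gives $-\lambda+3(1+\gamma)-9(1+\gamma)^2/\lambda<\hat z_{\gamma,\lambda}(K-q)<-\lambda+3(1+\gamma)$ uniformly in the argument; adding $\epsilon(q)$ to all three sides and using $\hat z_{\gamma,\lambda}(K-q)=z_{\gamma,\lambda}(K,q)-\epsilon(q)$ closes the proof. I do not expect any genuine obstacle: the analytic content (monotonicity of $q_i\mapsto\hat z_{\gamma,\lambda}(q)$ and the two envelope estimates) was already done in Lemma \ref{lem:prop_z_gamlam}, and the only point needing a little care is the torus bookkeeping — identifying which second argument $q$ sends $K-q$ to $0$ or $\vec\pi$ and checking that evenness of $\epsilon$ and $\hat z_{\gamma,\lambda}$ lets one display it as $\vec\pi-K$.
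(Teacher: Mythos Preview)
Your approach is exactly the paper's: the corollary is just Lemma~\ref{lem:prop_z_gamlam} read through the identity \eqref{w26tza1}, and the paper itself gives no more than a one-line pointer to that lemma. Your derivation of the first chain, the $K=0$ specialisation, and the transfer of \eqref{nice_estoa0s} to \eqref{asymp_zKp5e32} are all correct.

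There is, however, one slip in the torus bookkeeping you flagged. Having identified the maximising $q$ by $K-q=\vec\pi$, i.e.\ $q=K-\vec\pi$ (equivalently $K+\vec\pi$ on $\T^3$), you then claim that evenness of $\epsilon$ and $\hat z_{\gamma,\lambda}$ lets you rewrite this as $q=\vec\pi-K$. That step is wrong: the function $q\mapsto z_{\gamma,\lambda}(K,q)-\epsilon(q)=\hat z_{\gamma,\lambda}(K-q)$ is even about $q=K$, not about $q=0$, so $q=K-\vec\pi$ and $q=\vec\pi-K$ give different values unless $2K\equiv 0$. Indeed
\[
z_{\gamma,\lambda}(K,\vec\pi-K)-\epsilon(\vec\pi-K)=\hat z_{\gamma,\lambda}(2K-\vec\pi),
\]
which is not $\hat z_{\gamma,\lambda}(\vec\pi)$ in general. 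This is really a typo in the stated corollary (the upper endpoint should read $K+\vec\pi$ rather than $\vec\pi-K$); your instinct to scrutinise that identification was right, but the evenness justification you offered does not repair it. For $K=0$ the two coincide, so \eqref{baho_z_gl_0pi} and \eqref{asymp_zKp5e32} are unaffected.
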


Using the asymptotics \eqref{asymptotic} of $\hat z_{\gamma,\lambda}$ and the representation \eqref{w26tza1} of $z_{\gamma,\lambda}$ we can find an asymptotic formula for minimum and maximum points of $z_{\gamma,\lambda}.$

\begin{lemma}\label{lem:min_max_zgamlamK}
For any $\gamma>0$ there exists $\bar\lambda_\gamma>0$ such that for any $K\in\T^3$ and $\lambda>\bar\lambda_\gamma$ the function $q\mapsto z_{\gamma,\lambda}(q)$ has a unique maximum and unique minimum points $q_{\gamma,\lambda}^{\max}(K)$ and $q_{\gamma,\lambda}^{\min}(K).$ Moreover, both critical points are nondegenerate and 
$$
q_{\gamma,\lambda}^{\min}(K)=
\frac{\gamma}{\lambda}(\sin K_1,\sin K_2,\sin K_3) + O(\lambda^{-2}) %- \frac{\gamma^2}{\lambda^2}(\sin K_1\cos K_1,\sin K_2\cos K_2,\sin K_3\cos K_3) + O(\lambda^{-3}) 
$$
and 
$$
q_{\gamma,\lambda}^{\max}(K)=\vec\pi +
\frac{\gamma}{\lambda}(\sin K_1,\sin K_2,\sin K_3) + O(\lambda^{-2})  %- \frac{\gamma^2}{\lambda^2}(\sin K_1\cos K_1,\sin K_2\cos K_2,\sin K_3\cos K_3) + O(\lambda^{-3}) 
$$
as $\lambda\to+\infty.$ Finally, if $K=0,$ then $q_{\gamma,\lambda}^{\max}(0) = \vec\pi$ and 
$q_{\gamma,\lambda}^{\min}(0) = 0.$
\end{lemma}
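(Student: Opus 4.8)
The plan is to exploit the identity \eqref{w26tza1}, $z_{\gamma,\lambda}(K,q)=\hat z_{\gamma,\lambda}(K-q)+\epsilon(q)$, and to show that for large $\lambda$ the right‑hand side is a small $C^2$‑perturbation of the function $\epsilon$. Since $\epsilon$ is a Morse function on $\T^3$ with a nondegenerate minimum at $0$, a nondegenerate maximum at $\vec\pi$, and six saddles at the remaining points of $\{0,\pi\}^3$, the perturbed function inherits the same critical structure for $\lambda$ large, and tracking the two extreme critical points yields the asymptotic formulas; the case $K=0$ will follow directly from the monotonicity properties already recorded in Lemma \ref{lem:prop_z_gamlam}. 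The main obstacle is to upgrade the pointwise asymptotics \eqref{asymptotic} of $\hat z_{\gamma,\lambda}$ to a statement that is \emph{$C^2$ and uniform} (in $q$ and in $K$), namely $\nabla\hat z_{\gamma,\lambda}(q)=\frac\gamma\lambda(\sin q_1,\sin q_2,\sin q_3)+O(\lambda^{-2})$ and $D^2\hat z_{\gamma,\lambda}(q)=O(\lambda^{-1})$.

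\noindent\textbf{Step 1 (uniform $C^2$-asymptotics of $\hat z_{\gamma,\lambda}$).}
Fix $\gamma>0$ and take $\lambda$ large as in Remark \ref{rem:eigen_bor_all_lam}, so that $\hat z_{\gamma,\lambda}(q)$ is, for every $q\in\T^3$, the unique zero of $\hat\Delta(q,\cdot)$ below $\hat\epsilon_{\min}(q)$; since $\hat\Delta$ is real‑analytic with $\partial_z\hat\Delta<0$ there, the implicit function theorem makes $q\mapsto\hat z_{\gamma,\lambda}(q)$ real‑analytic on $\T^3$. Substituting $z=-\lambda+w$ and $\mu=1/\lambda$ one computes
$$
\lambda\,\hat\Delta(q,-\lambda+w)=\frac1\mu\Big(1-\frac1{(2\pi)^3}\int_{\T^3}\frac{dp}{1+\mu(\hat\epsilon(p,q)-w)}\Big)=:\tilde G(q,w,\mu),
$$
and $\tilde G$ extends to a real‑analytic function for $|\mu|$ small and $w$ near $3(1+\gamma)$ (the denominator staying $\ge\frac12$), with $\tilde G(q,w,0)=3(1+\gamma)-w$ because $(2\pi)^{-3}\int_{\T^3}\hat\epsilon(p,q)\,dp=3(1+\gamma)$, and $\partial_w\tilde G(q,w,0)=-1$. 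The analytic implicit function theorem then provides a real‑analytic $W(q,\mu)$ with $\tilde G(q,W(q,\mu),\mu)=0$, $W(q,0)=3(1+\gamma)$ and $\hat z_{\gamma,\lambda}(q)=-\lambda+W(q,1/\lambda)$; joint analyticity gives $\partial_q^\alpha\hat z_{\gamma,\lambda}(q)=O(\lambda^{-1})$ uniformly in $q$ for every $|\alpha|\ge1$, while expanding $W(q,\mu)=3(1+\gamma)-\frac\mu2\sum_i(1+\gamma^2+2\gamma\cos q_i)+O(\mu^2)$ and differentiating in $q$ yields
$$
\nabla\hat z_{\gamma,\lambda}(q)=\frac\gamma\lambda(\sin q_1,\sin q_2,\sin q_3)+O(\lambda^{-2}),\qquad D^2\hat z_{\gamma,\lambda}(q)=O(\lambda^{-1})\quad\text{uniformly in }q\in\T^3.
$$
(Alternatively, the gradient estimate can be obtained by differentiating $\hat\Delta(q,\hat z_{\gamma,\lambda}(q))=0$ and estimating the ratio $\int_{\T^3}\frac{\partial_{q_j}\hat\epsilon}{(\hat\epsilon-z)^2}dp\big/\int_{\T^3}\frac{dp}{(\hat\epsilon-z)^2}$ at $z=\hat z_{\gamma,\lambda}(q)\approx-\lambda$.) This is the step that carries the real work; the rest is bookkeeping.

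\noindent\textbf{Step 2 (localization and count of critical points).}
Using $\nabla_q[\hat z_{\gamma,\lambda}(K-q)]=-\nabla\hat z_{\gamma,\lambda}(K-q)$, $\nabla\epsilon(q)=(\sin q_1,\sin q_2,\sin q_3)$ and $D^2\epsilon(q)=\diag(\cos q_1,\cos q_2,\cos q_3)$, Step 1 gives, uniformly in $q$ and $K$,
$$
\partial_{q_j}z_{\gamma,\lambda}(K,q)=\sin q_j-\frac\gamma\lambda\sin(K_j-q_j)+O(\lambda^{-2}),\qquad D_q^2z_{\gamma,\lambda}(K,q)=\diag(\cos q_1,\cos q_2,\cos q_3)+O(\lambda^{-1}).
$$
Hence at any critical point $|\sin q_j|\le\gamma/\lambda+O(\lambda^{-2})$ for each $j$, so for $\lambda>\bar\lambda_\gamma$ every critical point lies within $O(\lambda^{-1})$ of one of the eight points $a\in\{0,\pi\}^3$. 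Near such an $a$ the Hessian equals $\diag(\cos a_1,\cos a_2,\cos a_3)+O(\lambda^{-1})$, which is invertible with uniformly bounded inverse and uniformly Lipschitz in $q$, so a quantitative implicit function theorem produces exactly one critical point $q_a$ in that neighbourhood, nondegenerate and of Morse index $\#\{j:a_j=\pi\}$. Thus $z_{\gamma,\lambda}(K,\cdot)$ has exactly eight critical points; only $q_0$ has index $0$ and only $q_{\vec\pi}$ has index $3$. Since $z_{\gamma,\lambda}(K,q)=\epsilon(q)-\lambda+3(1+\gamma)+O(\lambda^{-1})$ uniformly and $\epsilon$ takes the values $0,2,4,6$ on $\{0,\pi\}^3$ (pairwise at least $2$ apart), for $\lambda$ large the global minimum of $z_{\gamma,\lambda}(K,\cdot)$ is attained only at $q_{\gamma,\lambda}^{\min}(K):=q_0$ and the global maximum only at $q_{\gamma,\lambda}^{\max}(K):=q_{\vec\pi}$, both nondegenerate.

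\noindent\textbf{Step 3 (asymptotics of the extreme points and the case $K=0$).}
Near $q=0$ put $q_j=t_j$ with $t_j=O(\lambda^{-1})$; from $\sin t_j=\frac\gamma\lambda\sin(K_j-t_j)+O(\lambda^{-2})$ together with $\sin t_j=t_j+O(\lambda^{-3})$ and $\sin(K_j-t_j)=\sin K_j+O(\lambda^{-1})$ one gets $t_j=\frac\gamma\lambda\sin K_j+O(\lambda^{-2})$, i.e.\ the claimed formula for $q_{\gamma,\lambda}^{\min}(K)$. Near $\vec\pi$ put $q_j=\pi+r_j$ with $r_j=O(\lambda^{-1})$; using $\sin q_j=-\sin r_j$ and $\sin(K_j-q_j)=-\sin(K_j-r_j)$, the critical equation becomes $-r_j=-\frac\gamma\lambda\sin K_j+O(\lambda^{-2})$, whence $q_{\gamma,\lambda}^{\max}(K)=\vec\pi+\frac\gamma\lambda(\sin K_1,\sin K_2,\sin K_3)+O(\lambda^{-2})$. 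Finally, for $K=0$ one has $z_{\gamma,\lambda}(0,q)=\hat z_{\gamma,\lambda}(q)+\epsilon(q)$, and by Lemma \ref{lem:prop_z_gamlam} the map $q\mapsto\hat z_{\gamma,\lambda}(q)$ is symmetric, even in each coordinate and strictly increasing in each $q_i$ on $[0,\pi]$; since $\epsilon$ has the same properties, so does $z_{\gamma,\lambda}(0,\cdot)$, and therefore its minimum over $\T^3$ is attained exactly at $q=0$ and its maximum exactly at $q=\vec\pi$, in agreement with the above ($\sin 0=0$).
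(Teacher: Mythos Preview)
Your argument is correct and follows the same overall strategy as the paper: use the asymptotic expansion of $\hat z_{\gamma,\lambda}$ to view $z_{\gamma,\lambda}(K,\cdot)$ as a $C^2$-small perturbation of $\epsilon$, locate the critical points, and check their Hessians. The execution, however, differs in two places worth noting. First, your Step~1 derives the uniform $C^2$ control on $\hat z_{\gamma,\lambda}$ from the analytic implicit function theorem applied to the rescaled determinant $\tilde G(q,w,\mu)$; the paper instead simply invokes the expansion \eqref{asymptotic} and asserts that the remainder $g_{\gamma,\lambda}/\lambda^3$ has uniformly bounded derivatives, so your treatment is the more self-contained of the two. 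Second, the paper solves the critical equation $\sin q_i+\tfrac{\gamma}{\lambda}\sin(q_i-K_i)=O(\lambda^{-3})$ by combining the two sines into a single $\sin(q_i-\theta_i)$ with an explicit phase $\theta_i$, obtaining closed formulas for the two solutions $q_i^a$ and $q_i^b$ and then checking their Hessians directly; you instead argue abstractly via a Morse perturbation (all critical points lie near the eight points of $\{0,\pi\}^3$, exactly one near each by a quantitative inverse function theorem, with the expected Morse indices), and only afterwards extract the leading asymptotics. The paper's route gives slightly sharper explicit formulas up to $O(\lambda^{-2})$ in a single stroke, while yours separates existence/uniqueness/nondegeneracy from the asymptotic computation and avoids any trigonometric ingenuity; both reach the same conclusion, and your treatment of $K=0$ via Lemma~\ref{lem:prop_z_gamlam} coincides with the paper's.
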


\begin{proof}
Let us assume that $\lambda>3(1+\gamma)$ so that by \eqref{nice_estoa0s} $\hat z_{\gamma,\lambda}(q)<0\le\hat  \epsilon_{\min}(q)$ is the unique zero of $\hat\Delta(q,\cdot).$ By \eqref{asymptotic} and \eqref{w26tza1} 
$$
z_{\gamma,\lambda}(K,q) = \epsilon(q) - \lambda + 3(1+\gamma) - \frac{1}{2\lambda}\sum_{i=1}^3 (1+\gamma^2+2\gamma\cos(K_i-q_i)) - \frac{g_{\gamma,\lambda}(K,q)}{\lambda^3},
$$
where $g_{\gamma,\lambda}(K,\cdot)$ is a real-analytic function of $\T^3,$ whose all derivatives are bounded, uniformly in $K$ and $\lambda.$ Thus, if $q_i$ is a critical point, then 
\begin{equation}\label{all_asd_qws}
\p_i z_{\gamma,\lambda}(K,q) = \sin q_i + \frac{\gamma}{\lambda}\sin (q_i-K_i) - \frac{\p_i g_{\gamma,\lambda}(K,q)}{\lambda^3} = 0\quad\text{for $i=1,2,3,$}
\end{equation}
where $\p_i=\frac{\p}{\p q_i}.$ Now let  $\theta_i\in (-\pi/2,\pi/2)$ be such that 
\begin{equation}\label{saylov_otdi_qoldi1}
\cos\theta_i = \frac{1 + \frac{\gamma}{\lambda}\cos K_i}{\sqrt{1+\frac{\gamma^2}{\lambda^2}+\frac{2\gamma}{\lambda}\cos K_i}},\quad \sin\theta_i = \frac{\frac{\gamma}{\lambda}\sin K_i}{\sqrt{1+\frac{\gamma^2}{\lambda^2}+\frac{2\gamma}{\lambda}\cos K_i}},\quad i=1,2,3.
\end{equation}
Thus, from \eqref{all_asd_qws}  we obtain 
\begin{equation}\label{sinqi_thetai12}
\sin(q_i-\theta_i) = 
\frac{\p_i g_{\gamma,\lambda}(K,q)}{\lambda^3\sqrt{1+\frac{\gamma^2}{\lambda^2}+\frac{2\gamma}{\lambda}\cos K_i}},\quad i=1,2,3.
\end{equation}
Let 
$$
C_\gamma:=\sup_{i=1,2,3,\,K\in\T^3,\,\lambda>3(1+\gamma)}\,\|\p_i g_{\gamma,\lambda}(K,\cdot)\|_\infty
$$
and assume $\lambda>2C_{\gamma}^{1/3}.$ Then from \eqref{sinqi_thetai12} and \eqref{saylov_otdi_qoldi1} we get either 
$$
q_i^a = \arcsin \frac{\gamma \sin K_i}{\lambda\sqrt{1+\frac{\gamma^2}{\lambda^2}+\frac{2\gamma}{\lambda}\cos K_i}} + \arcsin 
\frac{\p_i g_{\gamma,\lambda}(K,q^a)}{\lambda^3\sqrt{1+\frac{\gamma^2}{\lambda^2}+\frac{2\gamma}{\lambda}\cos K_i}}
$$
or 
$$
q_i^b = \pm\pi + \arcsin \frac{\gamma \sin K_i}{\lambda\sqrt{1+\frac{\gamma^2}{\lambda^2}+\frac{2\gamma}{\lambda}\cos K_i}} - 
\frac{\p_i g_{\gamma,\lambda}(K,q^b)}{\lambda^3\sqrt{1+\frac{\gamma^2}{\lambda^2}+\frac{2\gamma}{\lambda}\cos K_i}},
$$
where the signs are taken so that $q_i\in\T.$ Now using the Taylor series of $(1+u)^{-1/2}$ and $\arcsin u$ at $u=0$ we find the following asymptotics of $q_i^a$ and $q_i^b:$
\begin{equation}\label{def:q_ia432}
q_i^a= \frac{\gamma\sin K_i}{\lambda} - \frac{\gamma^2\sin K_i\cos K_i}{\lambda^2} + \frac{w_{\gamma,\lambda}^{i,1}(K,q)}{\lambda^3}
\end{equation}
or 
\begin{equation}\label{def:q_ib432}
q_i^b = \pm\pi + \frac{\gamma\sin K_i}{\lambda} - \frac{\gamma^2\sin K_i\cos K_i}{\lambda^2} + \frac{w_{\gamma,\lambda}^{i,2}(K,q)}{\lambda^3}
\end{equation}
for some real-analytic functions $w_{\gamma,\lambda}^i(K,\cdot)$ and $w_{\gamma,\lambda}^2(K,\cdot),$ bounded uniformly in $K$ and $\lambda.$ 

We claim that for large $\lambda,$ depending only on $\gamma,$ $q_i^a$ is the unique minimum and $q_i^b$ is the unique maximum of $z_{\gamma,\lambda}(K,\cdot).$ To show this we compute the Hessian of $z_{\gamma,\lambda}:$
$$
\p_{i}\p_j z_{\gamma,\lambda}(K,q) = 
\begin{cases}
-\frac{\p_i\p_j g_{\lambda,\gamma}(K,q)}{\lambda^3} & \text{if $i\ne j,$} \\
\cos q_i + \frac{\gamma}{\lambda}\cos (q_i-K_i) - \frac{\p_i\p_i g_{\gamma,\lambda}(K,q)}{\lambda^3} & \text{if $i=j.$}
\end{cases}
$$
As 
\begin{align*}
\cos q_i^a+\frac{\gamma}{\lambda}\cos (q_i^a-K_i) = & \Big(1+\frac{\gamma}{\lambda}\cos K_i\Big) \cos q_i^a + \frac{\gamma\sin K_i}{\lambda}\sin q_i^a \\
= & \sqrt{1+\frac{\gamma^2}{\lambda^2}+\frac{2\gamma}{\lambda}\cos K_i}\,\cos (q_i^a-\theta_i) \\
= & \sqrt{1+\frac{\gamma^2}{\lambda^2}+\frac{2\gamma}{\lambda}\cos K_i}\,\sqrt{1 - \frac{[\p_ig_{\gamma,\lambda}(K,q)]^2}{\lambda^6\Big(1+\frac{\gamma^2}{\lambda^2}+\frac{2\gamma}{\lambda}\cos K_i\Big)}},
\end{align*}
where in the last equalities we used \eqref{saylov_otdi_qoldi1} and \eqref{sinqi_thetai12},
we can write 
$$
\nabla^2 z_{\gamma,\lambda}(K,q^a) = 
I + \frac{\gamma}{\lambda}\,\diag(\cos K_1,\cos K_2,\cos K_3) + \frac{W_{\gamma,\lambda}(K)}{\lambda^2},
$$
where $I$ is the $3\times 3$-identity matrix, $\diag(x,y,z)$ is the diagonal matrix with diagonal entries $x,y,z\in\R,$ and $W_{\gamma,\lambda}(K)$ is bounded uniformly in $K$ and $\lambda.$ Set 
$$
\bar\lambda_\gamma:=\max\Big\{3(1+\gamma),\,  2C_\gamma^{1/3}, \,   \sup_{\lambda>2C_\gamma^{1/3},\,K\in\T^3}\, \sqrt{9\|W_{\gamma,\lambda}(K)\|}\Big\}.
$$
Then for any $\lambda>\bar\lambda_\gamma$
$$
\nabla^2 z_{\gamma,\lambda}(K,q^a) \ge I - \frac{\gamma}{\lambda} I - \frac{\|W_{\gamma,\lambda}(K)\|}{\lambda^2}I  \ge \frac{2+\gamma}{3+3\gamma}\,I
$$
Analogously, 
$$
\nabla^2 z_{\gamma,\lambda}(K,q^b) \le -\frac{2+\gamma}{3+3\gamma}\,I.
$$
Since $q^a$ and $q^b$ are only critical points of $z_{\gamma,\lambda}(K,\cdot),$ the estimates on the Hessians show that $q^a$ is the unique (nondegenerate) global minimum and $q^b$ is the unique (nondegenerate) global maximum. The asymptotics of $q_i^a$ and $q_i^b$ directly follows from identities  \eqref{def:q_ia432} and \eqref{def:q_ib432}.

Finally, assume that $K=0.$ Then by the evenness of $q\mapsto \hat z(q),$ we have 
$z_{\gamma,\lambda}(0,q) = \hat z(q) + \epsilon(q).$ Since $0$ resp. $\vec\pi$ is unique minimum resp. unique maximum for both $\hat z$ (see \eqref{axborot_tashkilot1}) and $\epsilon,$  we have $q_{\gamma,\lambda}^{\min}(0)= 0$ and 
$q_{\gamma,\lambda}^{\max}(0)= \vec\pi.$
\end{proof}

\subsection{Essential spectrum of $H_{\gamma,\lambda}(K)$}

\begin{theorem}[HVZ theorem]\label{teo:ess_spectrum_3}
For any $\lambda\ge0$ and $K\in\T^3,$
\begin{equation}\label{zu712s}
\sigma_\ess(H_{\gamma,\lambda}(K)) = \sigma(H_{\gamma,\lambda}^{(2)}(K)),
\end{equation}
where $H_{\gamma,\lambda}^{(2)}(K)$ is the channel operator, given by \eqref{channel_operator}.
\end{theorem}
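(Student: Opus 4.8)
The plan is to prove the HVZ-type identity $\sigma_\ess(H_{\gamma,\lambda}(K)) = \sigma(H_{\gamma,\lambda}^{(2)}(K))$ by the standard two-sided inclusion, exploiting the fermionic symmetry to reduce everything to a single channel operator. Recall that in the $2+1$ system with zero-range interaction only the pairs $(1,3)$ and $(2,3)$ interact, and by antisymmetry in the first two variables the two channel Hamiltonians $H_{\gamma,0}(K)-\lambda V_{13}$ and $H_{\gamma,0}(K)-\lambda V_{23}$ are unitarily conjugate (via the transposition $f(p,q)\mapsto f(q,p)$, which is an involution leaving $L^{2,a}$ invariant up to a sign), so both channels contribute the same set $\sigma(H_{\gamma,\lambda}^{(2)}(K))$.

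For the inclusion $\sigma(H_{\gamma,\lambda}^{(2)}(K))\subseteq\sigma_\ess(H_{\gamma,\lambda}(K))$ I would construct Weyl singular sequences. Given $z\in\sigma(H_{\gamma,\lambda}^{(2)}(K))$, by \eqref{hsz172} there is $q_0\in\T^3$ with $z\in\sigma(h_{\gamma,\lambda}(K,q_0))$, and a normalized $\varphi\in L^2(\T^3)$ (or an approximate eigenfunction, if $z$ is in the continuous part) with $(h_{\gamma,\lambda}(K,q_0)-z)\varphi$ small. One then builds $f_n(p,q) = \varphi(p)\,\chi_n(q) - \varphi(q)\,\chi_n(p)$ (antisymmetrized), where $\chi_n$ is an $L^2$-normalized bump concentrating near $q_0$ in momentum but spreading out so that the two terms become asymptotically orthogonal and the cross terms in $V_{13}+V_{23}$ acting on them vanish in the limit. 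Because $H_{\gamma,0}(K)$ is multiplication by $\cE_{\gamma,K}$ and $V_{23}\varphi(p)\chi_n(q)$ only sees the $q$-average of $\chi_n$, one checks $(H_{\gamma,\lambda}(K)-z)f_n\to0$ while $\|f_n\|\not\to0$; this produces the desired Weyl sequence, and also shows the singular sequence leaves every compact operator's range, so $z$ is in the essential spectrum. One must handle both the three-particle branch (where $q$ ranges freely, $\varphi$ an approximate eigenfunction of the multiplication operator) and the two-particle branch $\{z_{\gamma,\lambda}(K,q)\}$ (where $\varphi$ is the genuine eigenfunction of $h_{\gamma,\lambda}(K,q_0)$) in a unified way.

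For the reverse inclusion $\sigma_\ess(H_{\gamma,\lambda}(K))\subseteq\sigma(H_{\gamma,\lambda}^{(2)}(K))$ I would use a Weyl-type / compactness argument in the spirit of the geometric (Faris–Enss) approach or the Zhislin-type partition. Since the interaction $\lambda(V_{13}+V_{23})$ is a sum of the two channel perturbations, one writes a resolvent identity expressing the difference of the resolvents of $H_{\gamma,\lambda}(K)$ and of the direct-sum channel operator in terms of operators that, when localized away from the collision manifolds $\{p=K-p-q\}$ and $\{q=K-p-q\}$, are compact. Concretely, introduce a smooth partition of unity on $(\T^3)^2$ subordinate to the two collision regions plus the region where all three particles are separated (where the free operator already controls things), transplant the problem onto each piece, and observe that on the "all separated" region the resolvent of $H_{\gamma,\lambda}(K)$ differs from that of $H_{\gamma,0}(K)$ by a compact operator, while on the collision regions it is governed by the corresponding channel operator. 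The key compactness input is that $V_{\alpha 3}$, being a rank-one-in-one-variable integral operator composed with multiplication by a cutoff supported away from the complementary collision set, is compact after multiplication by the free resolvent; this is exactly the type of estimate on $h_{\gamma,\lambda}(K,q)$ and the bounds in Corollary \ref{cor:estimos_two_pasrtos} that the excerpt has prepared.

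The main obstacle, and the part deserving the most care, is the reverse inclusion: proving that nothing outside $\sigma(H_{\gamma,\lambda}^{(2)}(K))$ can be in the essential spectrum. Unlike the forward direction, which is a direct construction, this requires a genuine decoupling estimate showing that any Weyl sequence for $H_{\gamma,\lambda}(K)$ either concentrates near a two-particle collision manifold — forcing the energy into $\sigma(H_{\gamma,\lambda}^{(2)}(K))$ — or disperses completely, forcing the energy into the three-particle branch $[\cE_{\gamma,\min}(K),\cE_{\gamma,\max}(K)]$, which is also contained in $\sigma(H_{\gamma,\lambda}^{(2)}(K))$ by \eqref{shdzu67ec}. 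Carrying this out rigorously on the torus, keeping track of the quasi-momentum constraint $\cE_{\gamma,K}(p,q)=\epsilon(p)+\epsilon(q)+\epsilon(K-p-q)$ and ensuring the cross terms between the two channels genuinely vanish, is where the bulk of the work in the appendix will lie; the spectral analysis of $h_{\gamma,\lambda}(K,q)$ and the uniform estimates already established make this tractable but still delicate.
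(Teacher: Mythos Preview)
Your plan for the inclusion $\sigma(H_{\gamma,\lambda}^{(2)}(K))\subset\sigma_\ess(H_{\gamma,\lambda}(K))$ via antisymmetrized Weyl sequences is essentially what the paper does, treating separately the three-particle branch (bumps in both variables) and the two-particle branch (eigenfunction of the fiber $h_{\gamma,\lambda}(K,q_0)$ tensored with a shrinking bump in $q$, then antisymmetrized). So on this half you agree with the paper.

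Where you diverge is the reverse inclusion. You propose the full geometric/partition-of-unity machinery (Enss--Zhislin type decoupling, compactness of localized potentials against the free resolvent), and you flag this as the main obstacle. The paper bypasses all of that with a one-line observation: extend $H_{\gamma,\lambda}(K)$ to the operator $H_{\gamma,\lambda}^{(3)}(K)$ on the full, non-symmetric space $L^2((\T^3)^2)$ given by the same formulas, invoke the already-known lattice HVZ theorem for $H_{\gamma,\lambda}^{(3)}(K)$ to get $\sigma_\ess(H_{\gamma,\lambda}^{(3)}(K))=\sigma(H_{\gamma,\lambda}^{(2)}(K))$, and then use that $L^{2,a}$ is a reducing subspace for $H_{\gamma,\lambda}^{(3)}(K)$, so $\sigma_\ess(H_{\gamma,\lambda}(K))\subset\sigma_\ess(H_{\gamma,\lambda}^{(3)}(K))$. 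Thus in the paper the ``hard'' inclusion is trivial, and the genuine work lies precisely in the direction you regarded as routine---making the Weyl sequences antisymmetric while keeping them singular. Your route would of course also work, and has the merit of being self-contained, but it is considerably longer and duplicates the proof of the non-symmetric HVZ that the paper simply cites.
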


Even though this kind of results are expected to exist in the literature, we could not find it and for the sake of completeness, we provide a full proof in the appendix. By  \eqref{shdzu67ec} and \eqref{zu712s}
\begin{equation}\label{esse_spectr_bestform}
\sigma_{\ess}(H_{\gamma,\lambda}(K))=[\tau_{\gamma,\min }(K,\lambda),
\tau_{\gamma,\max }(K,\lambda)]\cup [\cE_{\gamma,\min}(K), \cE_{\gamma,\max}(K)],
\end{equation}
where 
$$
\tau_{\gamma,\min }(K,\lambda) := \min_{q}\,z_{\gamma,\lambda}(K,q), \quad 
\tau_{\gamma,\max }(K,\lambda):=\max_{q}\,z_{\gamma,\lambda}(K,q).
$$
Observe that if 
\begin{equation*}%\label{2par_spec_exits}
\lambda > \max_{q\in\T^3}\,\Big(\frac{1}{(2\pi)^3}\int_{\T^3} \frac{dp}{\cE_{\gamma,K}(p,q) - \cE_{\gamma,\min}(K)}\Big)^{-1},
\end{equation*}
then from the strict monotonicity of $\Delta_{\gamma,K}(q,\cdot,\lambda)$ we deduce 
$z_{\gamma,\lambda}(K,q)<\cE_{\gamma,\min}(K)$ for all $q\in\T^3$ so that $\tau_{\gamma,\max}(K,\lambda)<\cE_{\gamma,\min}(K).$ Hence, for such $\lambda$ there is a gap in the  essential spectrum of $H_{\gamma,\lambda}(K).$ 

When $K=0,$ in view of \eqref{zgamlam_est_0pie},
\begin{equation}\label{two_part_ess_K0}
[\tau_{\gamma,\min}(0,\lambda), \tau_{\gamma,\max}(0,\lambda)] = [z_{\gamma,\lambda}(0,0),z_{\gamma,\lambda}(0,\vec\pi)]\subset [-\lambda,-\lambda + 3(3+\gamma + |1-\gamma|)].
\end{equation}

\section{Birman-Schwinger-type principle}\label{sec:bsh_operator_definos}

In this section we introduce a Birman-Schwinger-type operator, a main tool in the proof of the existence and absence of the discrete spectra. 

Fix $K\in\T^3$ and $\lambda>0,$ and 
let us study the eigenvalue equation 
\begin{equation}\label{eigen_eq_H_K}
H_{\gamma,\lambda}(K) f = z f,\quad 0\ne f\in L^{2,a}((\T^3)^2),\quad z\in\R\setminus\sigma_\ess(H_{\gamma,\lambda}(K)).
\end{equation}
As $V$ are nonnegative, $z<\cE_{\gamma,\min}(K).$
Set 
\begin{equation*}%\label{def_phi_576}
\phi(q):=V_{13}f(p,q) = (2\pi)^{-3}\int_{\T^3} f(s,q)ds
\end{equation*}
so that recalling the antisymmetricity of $f,$
$$
V_{23}f(p,q) = (2\pi)^{-3}\int_{\T^3} f(p,s)ds = -\phi(p).
$$
In particular, $\phi\in L^2(\T^3)$ and 
\begin{equation}\label{phi_has_0_mean}
\int_{\T^3}\phi(q)dq = (2\pi)^{-3}\int_{\T^3\times\T^3} f(s,q)dsdq = 0.
\end{equation}
Thus, \eqref{eigen_eq_H_K} is represented as 
\begin{equation}\label{equa_for_f}
f(p,q) = \frac{\lambda}{(2\pi)^{3}}\, \frac{\phi(q)-\phi(p)}{\cE_{\gamma,K}(p,q)-z}.
\end{equation}
Using \eqref{equa_for_f} in \eqref{eigen_eq_H_K} we get 
\begin{equation}\label{coming_to_det}
\phi(p)\Big(1-\frac{\lambda }{(2\pi)^3} \int_{\T^3}\frac{ds}
{\cE_{\gamma,K}(p,s)-z} \Big) + \frac{\lambda}{(2\pi)^3}
\int_{\T^3}\frac{\phi(s)ds} {\cE_{\gamma,K}(p,s)-z} = 0. 
\end{equation}
Note that 
$$
1-\frac{\lambda }{(2\pi)^3} \int_{\T^3}\frac{ds}
{\cE_{\gamma,K}(p,s)-z} = \Delta_{\gamma,K}(p,z,\lambda),
$$
where $\Delta_{\gamma,K}(\cdot,z,\lambda)$ is the Fredholm determinant, defined in \eqref{fredholm_determ}. As $z\notin\sigma_\ess(H_{\gamma,\lambda}(K)),$ it is neither in $[\cE_{\gamma,\min}(K),\cE_{\gamma,\max}(K)]$ nor an eigenvalue of $h_{\gamma,\lambda}(K,q)$ for any $q\in\T^3,$ Therefore, 
$$
\Delta_{\gamma,K}(q,z,\lambda) \quad \text{is}\quad 
\begin{cases}
>0 & \text{if $z<\tau_{\gamma,\min}(K,\lambda),$}\\
<0 & \text{if $z\in(\tau_{\gamma,\max}(K,\lambda),\cE_{\gamma,\min}(K))$}
\end{cases}
\quad\text{for all $q\in\T^3.$}
$$
Hence, setting 
\begin{equation*}%\label{def_psi_8123}
\psi:=\phi\sqrt{|\Delta_{\gamma,K}(\cdot,z,\lambda)|}
\end{equation*}
we can represent \eqref{coming_to_det} as a fixed point equation 
\begin{equation}\label{eigen_equas}
\psi(p) = -\frac{\lambda}{(2\pi)^3}\,\frac{\sign\Delta_{\gamma,K} }{\sqrt{|\Delta_{\gamma,K}(p,z,\lambda)|}} \int_{\T^3}\frac{\psi(s)ds} {(\cE_{\gamma,K}(p,s)-z) \sqrt{|\Delta_{\gamma,K}(s,z,\lambda)|}},
\end{equation}
where in the notation of \eqref{esse_spectr_bestform},
$\sign\Delta_{\gamma,K} = 1$ if $z< \tau_{\gamma,\min}(K,\lambda)$ and 
$-1$ if $z\in (\tau_{\gamma,\max}(K,\lambda),\cE_{\gamma,\min}(K)).$
Since $|\Delta_{\gamma,K}(\cdot,z,\lambda)|\ge \delta_{z,\lambda}>0,$ we have $\psi\in L^2(\T^3).$ Moreover, $\psi\ne0,$ otherwise, from its definition, $\phi=0,$ and hence, by \eqref{equa_for_f} $f=0,$ a contradiction. In view of \eqref{phi_has_0_mean}, $\psi$ satisfies the orthogonality assumption 
\begin{equation}\label{psi_ortho_det}
\int_{\T^3} \frac{\psi(s)ds}{\sqrt{|\Delta_{\gamma,K}(s,z,\lambda)|}} = 0.
\end{equation}

Conversely, assume that $0\ne \psi\in L^2(\T^3),$ satisfying the orthogonality condition \eqref{psi_ortho_det}, satisfies \eqref{eigen_equas}. Then the nonzero $L^2$-function $\phi:=\psi\,|\Delta_{\gamma,K}(\cdot,z,\lambda)|^{-1/2}$  solves \eqref{coming_to_det} and satisfies \eqref{phi_has_0_mean}. In view of the real-analyticity and nonconstancy of $\cE_{\gamma,K}$, $\phi$ is not constant. Then the function $f,$ given as in \eqref{equa_for_f}, is a nonzero $L^2$-function. By the symmetricity of $\cE_{\gamma,K}$ it follows that $f\in L^{2,a}((\T^3)^2).$ Then by \eqref{coming_to_det} $f$ satisfies \eqref{eigen_eq_H_K}.

These discussions yield the following theorem.

\begin{theorem}[A Birman-Schwinger-type principle]\label{teo:bsh_principle}
A number $z\in \R \setminus \sigma_\ess(H_{\gamma,\lambda}(K))$ is an eigenvalue of $H_{\gamma,\lambda}(K)$ if and only if $1$ is an eigenvalue of the operator 
$$
\cB_{\gamma,K}(z,\lambda) \phi(p):=-\frac{\lambda}{(2\pi)^3}\,\frac{\sign\Delta_{\gamma,K}}{\sqrt{|\Delta_{\gamma,K}(p,z,\lambda)|}} \int_{\T^3}\frac{\phi(s)ds} {(\cE_{\gamma,K}(p,s)-z) \sqrt{|\Delta_{\gamma,K}(s,z,\lambda)|}},\quad \phi\in L^2(\T^3),
$$
in the subspace 
\begin{equation}\label{def:H_delta}
\sH_\Delta:=\sH_{\gamma,\Delta}(K,z,\lambda):=\Big\{\psi\in L^2(\T^3):\,\, \int_{\T^3} \frac{\psi(s)ds}{\sqrt{|\Delta_{\gamma,K}(s,z,\lambda)|}} = 0\Big\}.
\end{equation}
Moreover, if $f\in L^{2,a}((\T^3)^2)$ is an eigenfunction of $H_{\gamma,\lambda}(K),$ associated to $z,$ then 
\begin{equation}\label{psi_defined_by_f}
\psi:= \sqrt{|\Delta_{\gamma,K}(\cdot,z,\lambda)|}\int_{\T^3}\,f(s,\cdot)ds
\end{equation}
is nonzero, belongs to $\sH_\Delta$ and an eigenfunction of $\cB_{\gamma,K}(z,\lambda),$ associated to $1.$ Conversely, if $0\ne\psi\in \sH_\Delta$ solves $ \cB_{\gamma,K}(z,\lambda) \psi = \psi,$ then the function 
\begin{equation}\label{f_defined_by_psi}
f(p,q):=\frac{\phi(q)-\phi(p)}{\cE_{\gamma,K}(p,q)-z},\quad \phi:=\frac{\psi}{\sqrt{|\Delta_{\gamma,K}(\cdot,z,\lambda)|}}
\end{equation}
is nonzero, belongs to $L^{2,a}((\T^3)^2)$ and satisfies $H_{\gamma,\lambda}(K) f= zf.$
Finally, the multiplicities of the eigenvalues $z$ and $1$ are the same.
\end{theorem}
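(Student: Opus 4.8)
The plan is to prove Theorem~\ref{teo:bsh_principle} by making rigorous the formal manipulations already carried out in the preceding discussion, i.e., by establishing a bijective, multiplicity-preserving correspondence between the eigenspace of $H_{\gamma,\lambda}(K)$ at $z$ and the $1$-eigenspace of $\cB_{\gamma,K}(z,\lambda)$ restricted to $\sH_\Delta$. The whole content is really contained in the chain of equivalences running from \eqref{eigen_eq_H_K} through \eqref{eigen_equas}, so the proof amounts to checking that each implication is genuinely reversible and that the maps $f\mapsto\psi$ and $\psi\mapsto f$ defined by \eqref{psi_defined_by_f} and \eqref{f_defined_by_psi} are mutually inverse linear maps between the two eigenspaces.

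First I would fix $z\in\R\setminus\sigma_\ess(H_{\gamma,\lambda}(K))$, so that $z<\cE_{\gamma,\min}(K)$ (as $V_{13}+V_{23}\ge0$) and, crucially, $\Delta_{\gamma,K}(\cdot,z,\lambda)$ has a fixed sign and is bounded away from zero: there is $\delta_{z,\lambda}>0$ with $|\Delta_{\gamma,K}(q,z,\lambda)|\ge\delta_{z,\lambda}$ for all $q\in\T^3$. This uniform bound is what guarantees all the $L^2$ membership claims: $\psi\in L^2\iff\phi\in L^2$, and division by $\sqrt{|\Delta_{\gamma,K}|}$ is a bounded bijection of $L^2(\T^3)$. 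I would then argue the forward direction: given an eigenfunction $0\ne f\in L^{2,a}((\T^3)^2)$, set $\phi:=V_{13}f$, note via antisymmetry that $V_{23}f(p,q)=-\phi(p)$, derive the pointwise identity \eqref{equa_for_f} from $(H_{\gamma,0}(K)-z)f=\lambda(V_{13}+V_{23})f$ (legitimate since $\cE_{\gamma,K}(p,q)-z\ne0$ a.e., indeed $\ge\cE_{\gamma,\min}(K)-z>0$), and integrate in $p$ to obtain \eqref{coming_to_det}; the zero-mean condition \eqref{phi_has_0_mean} comes from integrating \eqref{equa_for_f} over $p$ and $q$ and using symmetry of $\cE_{\gamma,K}$. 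Dividing \eqref{coming_to_det} by $\sqrt{|\Delta_{\gamma,K}(p,z,\lambda)|}$ and substituting $\psi=\phi\sqrt{|\Delta_{\gamma,K}(\cdot,z,\lambda)|}$ yields exactly $\cB_{\gamma,K}(z,\lambda)\psi=\psi$; \eqref{phi_has_0_mean} rewrites as the orthogonality \eqref{psi_ortho_det}, i.e. $\psi\in\sH_\Delta$; and $\psi\ne0$ since $\psi=0\Rightarrow\phi=0\Rightarrow f=0$ by \eqref{equa_for_f}. For the converse I would reverse each step: given $0\ne\psi\in\sH_\Delta$ with $\cB_{\gamma,K}(z,\lambda)\psi=\psi$, set $\phi=\psi/\sqrt{|\Delta_{\gamma,K}(\cdot,z,\lambda)|}\in L^2\setminus\{0\}$, multiply the fixed-point equation by $\sqrt{|\Delta_{\gamma,K}(p,z,\lambda)|}$ to recover \eqref{coming_to_det}, observe $\phi$ is not a.e.\ constant (a nonzero constant would violate \eqref{phi_has_0_mean}, which is \eqref{psi_ortho_det} rewritten), define $f$ by \eqref{f_defined_by_psi}, check $f\in L^2((\T^3)^2)$ using $|\cE_{\gamma,K}(p,q)-z|\ge\cE_{\gamma,\min}(K)-z>0$, check $f\in L^{2,a}$ from the symmetry $\cE_{\gamma,K}(p,q)=\cE_{\gamma,K}(q,p)$, verify $f\not\equiv0$ (else $\phi(q)=\phi(p)$ a.e., forcing $\phi$ constant, forcing $\phi=0$ by \eqref{psi_ortho_det}, contradiction — here real-analyticity/nonconstancy of $\cE_{\gamma,K}$ is used so that $\phi(q)-\phi(p)$ vanishing a.e.\ really forces $\phi$ constant), and finally run \eqref{coming_to_det} backwards to get $(H_{\gamma,0}(K)-z)f=\lambda(V_{13}+V_{23})f$.

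Finally, for the multiplicity statement I would observe that $f\mapsto\psi$ via \eqref{psi_defined_by_f} and $\psi\mapsto f$ via \eqref{f_defined_by_psi} are both \emph{linear}, and the above two directions show they map $\ker(H_{\gamma,\lambda}(K)-z)$ into $\{\psi\in\sH_\Delta:\cB_{\gamma,K}(z,\lambda)\psi=\psi\}$ and back. It remains to check they are mutually inverse: starting from $f$, forming $\phi=V_{13}f$, then $\psi=\phi\sqrt{|\Delta_{\gamma,K}|}$, then $\phi$ again, then the function $\tilde f$ of \eqref{f_defined_by_psi}, one recovers $\tilde f=f$ precisely because of \eqref{equa_for_f} — which expresses $f$ itself in the form \eqref{f_defined_by_psi} with this very $\phi$ (up to the constant $\lambda/(2\pi)^3$, which should be absorbed into the normalization of $\phi$; I would fix the convention so the round-trip is the identity); the other composition is the identity by the same token. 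Hence both eigenspaces have equal (possibly infinite, though compactness of $\cB_{\gamma,K}(z,\lambda)$ — to be established later — makes it finite) dimension. The only mildly delicate point is the bookkeeping of the constant $\lambda(2\pi)^{-3}$ and the a.e.-versus-pointwise issues in \eqref{equa_for_f}; none of it is a real obstacle, since the uniform lower bound on $|\Delta_{\gamma,K}(\cdot,z,\lambda)|$ together with $\cE_{\gamma,\min}(K)-z>0$ controls every quotient that appears. I expect the main (very minor) obstacle to be simply presenting the inverse-map verification cleanly rather than any genuine analytic difficulty.
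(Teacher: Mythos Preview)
Your proposal is correct and follows essentially the same route as the paper: the forward and backward directions are exactly the reversibility checks on \eqref{eigen_eq_H_K}--\eqref{eigen_equas} that you describe. The only minor difference is in the multiplicity argument: the paper avoids the constant $\lambda(2\pi)^{-3}$ mismatch between \eqref{equa_for_f} and \eqref{f_defined_by_psi} by simply showing that each of the two linear maps $f\mapsto\psi$ and $\psi\mapsto f$ preserves linear independence (so each eigenspace has dimension at most the other's), rather than verifying a literal round-trip identity; your rescaling fix works too, but the paper's version is marginally cleaner bookkeeping.
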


\begin{proof}
We have already established the first assertion. To prove the last assertion, we choose orthonormal eigenfunctions $f_1,\ldots,f_n$ of $H_{\gamma,\lambda}(K),$ associated to $z,$ and define $\psi_i\in\sH_\Delta$ as in  \eqref{psi_defined_by_f} with $f=f_i$. By the first assertion of the theorem, each $\psi_i$ is a nonzero eigenfunction of $\cB_{\gamma,K}(z,\lambda),$ associated to $1.$ If $\psi_1,\ldots,\psi_n$ are linearly dependent, then so are the functions $\phi_i:=\psi_i|\Delta_{\gamma,K}(\cdot,z,\lambda)|^{-1/2},$ $i=1,\ldots,n.$ Then in view of the equality \eqref{equa_for_f} $f_1,\ldots,f_n$ are also linearly dependent, which contradicts to their orthogonality.
Conversely, if $\psi_1,\ldots,\psi_n\in \sH_\Delta$ are orthonormal eigenfunctions of $\cB_{\gamma,K}(z,\lambda),$ associated to $1,$ and $f_i$ are given by \eqref{f_defined_by_psi} with $\psi=\psi_i,$ then clearly, $f_i$ are linearly independent. 
Thus, the eigenspaces associated to $z$ in $L^{2,a}((\T^3)^2)$ and to $1$ in $\sH_\Delta$ have the same dimension.
\end{proof}

\section{Behaviour of $\cB_{\gamma,K}(z,\lambda)$ for large $\lambda$ and $|z|$}\label{sec:bsh_oper_for_large_lambda}

Unfortunately, the analysis of eigenvalues of $\cB_{\gamma,K}$ is too difficult in the general case. But we can say more when $\lambda$ and $|z|$ are large. In this case the operator can be represented as a sum of a finite rank integral operator and an operator of small norm.

Fix $\gamma>0$ and let $\bar\lambda_\gamma>0$ be given by Lemma \ref{lem:min_max_zgamlamK}. We also fix $K\in\T^3$ and $\lambda>\bar\lambda_\gamma.$ Let us study the behaviour of the projection of $\cB_{\gamma,K}(z,\lambda)$ onto $\sH_\Delta$ for large $\lambda$ and $|z|.$ Note that for any $z<\cE_{\gamma,\min}(K)$ we have 
$$
\frac{1}{\cE_{\gamma,K}(p,q) - z} = -\frac{1}{z} -
\frac{\cE_{\gamma,K}(p,q)}{z^2} + 
\Big[\frac{\cE_{\gamma,K}(p,q)}{z}\Big]^2\frac{1}{\cE_{\gamma,K}(p,q)-z}.
$$
These summands provide the principal and residual parts of $\cB_{\gamma,K}.$

Assuming $z\in (-\infty,\cE_{\gamma,\min}(K))\setminus [\tau_{\gamma,\min}(K,\lambda), \tau_{\gamma,\max}(K,\lambda)],$ let $\cB^r$ be the integral operator with the kernel 
$$
-\frac{\lambda}{(2\pi)^3}\frac{\sign\,\Delta_{\gamma,K}}{\sqrt{|\Delta_{\gamma,K}(p,z,\lambda)|}\sqrt{|\Delta_{\gamma,K}(p,z,\lambda)|}} \Big[\frac{\cE_{\gamma,K}(p,q)}{z}\Big]^2\frac{1}{\cE_{\gamma,K}(p,q)-z}.
$$
Note that for any $f\in L^2(\T^3)$
$$
|(\cB^r f,f)_{L^2}| \le \frac{\lambda}{(2\pi)^3} \frac{\cE_{\gamma,\max}(K)^2}{z^2\,(\cE_{\gamma,\min}(K)-z)} \Big(\int_{\T^3} \frac{f(p)dp}{\sqrt{|\Delta_{\gamma,K}(p,z,\lambda)|}}\Big)^2.
$$
Thus, 
\begin{equation*}
\|\cB^r\| \le  \frac{\lambda}{(2\pi)^3} \frac{\cE_{\gamma,\max}(K)^2}{z^2\,(\cE_{\gamma,\min}(K)-z)} \,\int_{\T^3} \frac{dp}{|\Delta_{\gamma,K}(p,z,\lambda)|}. 
\end{equation*}
Let us estimate $|\Delta_{\gamma,K}(p,z,\lambda)|$ from below:
\begin{align*}
|\Delta_{\gamma,K}(p,z,\lambda)| = & \Big|\frac{\lambda}{(2\pi)^3}\int_{\T^3}\frac{dq}{\cE_{\gamma,K}(p,q) - z_{\gamma,\lambda}(K,p)} -  \frac{\lambda}{(2\pi)^3}\int_{\T^3}\frac{dq}{\cE_{\gamma,K}(p,q) - z} \Big| \\
\ge & \frac{\lambda\,|z-z_{\gamma,\lambda}(K,p)|}{(\cE_{\gamma,\min}(K) - z_{\gamma,\lambda}(K,p))(\cE_{\gamma,\min}(K) - z) }.
\end{align*}
Moreover, recalling $\lambda>3(1+\gamma)$ and using \eqref{asymp_zKp5e32} we find 
$$
\cE_{\gamma,\min}(K) - z_{\gamma,\lambda}(K,p) \le 
\cE_{\gamma,\min}(K) - \epsilon(p) + \lambda - 3(1+\gamma)+ \frac{9(1+\gamma)^2}{\lambda}\le \cE_{\gamma,\min}(K) + \lambda
$$
provided $\lambda>3(2+\gamma).$ 
Thus, 
$$
\|\cB^r\| \le \frac{\cE_{\gamma,\max}(K)^2[\cE_{\gamma,\min}(K)  + \lambda]}{(2\pi)^3 z^2} \int_{\T^3} \frac{dp}{|z-z_{\gamma,\lambda}(K,p)|}.
$$
By Lemma \ref{lem:min_max_zgamlamK} $z_{\gamma,\lambda}(K,\cdot)$ admits a unique nondegenerate minimum $q_{\gamma,\lambda}^{\min}(K)$ and 
a unique nondegenerate maximum $q_{\gamma,\lambda}^{\max}(K),$ and  by the choice of $\lambda,$
$$
\tau_{\gamma,\min}(K,\lambda) = z_{\gamma,\lambda}(K,q_{\gamma,\lambda}^{\min}(K)),\quad 
\tau_{\gamma,\max}(K,\lambda) = z_{\gamma,\lambda}(K,q_{\gamma,\lambda}^{\max}(K)).
$$
Hence, 
$$
\int_{\T^3} \frac{dp}{|z-z_{\gamma,\lambda}(K,p)|}
\le \max\Big\{\int_{\T^3}\frac{dp}{z_{\gamma,\lambda}(K,q_{\gamma,\lambda}^{\max}(K)) - z_{\gamma,\lambda}(K,p)},\,\int_{\T^3}\frac{dp}{z_{\gamma,\lambda}(K,p) - z_{\gamma,\lambda}(K,q_{\gamma,\lambda}^{\min}(K)) }\Big\}.
$$
Clearly, last two integrals are bounded uniformly in $\lambda>\bar\lambda_\gamma$ and $K\in\T^3.$ Indeed, in view of the asymptotics of $z_{\gamma,\lambda}(K,p)$ in $\lambda,$ the only diverging term is a constant $-\lambda,$ the remaining $\lambda$-dependent terms are small. Moreover, by the dominated convergence theorem, both integrals are continuous functions of $K,$ and hence, using $0\le \cE_{\gamma,\min}(K)\le \cE_{\gamma,\max}(K) \le 3(2+\gamma)$ we deduce 
\begin{equation}\label{resi_2sd3}
\|\cB^r\| \le \frac{C_\gamma(1+\lambda)  }{z^2}.%\quad \lambda>\bar\lambda_\gamma,\quad z\in (-\infty,\cE_{\gamma,\min}(K))\setminus [\tau_{\gamma,\min}(K,\lambda),\tau_{\gamma,\min}(K,\lambda)].
\end{equation}
Now consider the principal part $\cB^p$ of $\cB_{\gamma,K},$ which is an integral operator with the kernel 
$$
-\frac{\lambda}{(2\pi)^3}\frac{\sign\,\Delta_{\gamma,K}}{\sqrt{|\Delta_{\gamma,K}(p,z,\lambda)|}\sqrt{|\Delta_{\gamma,K}(p,z,\lambda)|}} \Big[-\frac{1}{z} - \frac{\epsilon(p)+\epsilon(q)+\gamma\epsilon(K-p-q)}{z^2}\Big].
$$
Let $\cP_\Delta$ be the projection onto $\sH_\Delta,$ i.e.,
$$
\cP_\Delta f = f- (f,\phi_0)\phi_0,
$$
where $\phi_0:=\frac{c_0}{\sqrt{|\Delta_{\gamma,K} (\cdot,z,\lambda)|}}$ and 
$$
c_0:=\Big(\int_{\T^3}\frac{dq}{|\Delta_{\gamma,K}(q,z,\lambda)|}\Big)^{-1/2}.
$$
Using the explicit expression of the kernel of $\cB^p$ we observe that in computation of the projection 
$$
\tilde \cB:=\cP_\Delta\cB^p\cP_\Delta
$$ 
of $\cB^p$ onto $\sH_\Delta,$ the summands containing $1/z,$ $1/z^2,$ $\epsilon(p)/z^2$ and $\epsilon(q)/z^2$ vanish. Therefore, with a slight abuse of notation we assume that $\cB^p$ is the integral operator with the kernel 
$$
-\frac{\gamma\lambda}{(2\pi)^3}\frac{\sign\,\Delta_{\gamma,K}}{\sqrt{|\Delta_{\gamma,K}(p,z,\lambda)|}\sqrt{|\Delta_{\gamma,K}(q,z,\lambda)|}} \frac{\xi(K-p-q)}{z^2},
$$
where 
$$
\xi(p) = \sum_{i=1}^3 \cos p_i.
$$

\begin{lemma}\label{lem:eigens_BDelta}
For any $\lambda>\bar\lambda_\gamma$ and $z\in (-\infty,\cE_{\gamma,\min}(K))\setminus [\tau_{\gamma,\min}(K,\lambda),\tau_{\gamma,\max}(K,\lambda)]$ the operator $\tilde \cB$ has  three positive and three negative eigenvalues.
\end{lemma}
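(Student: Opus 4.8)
The plan is to exploit the explicit finite‑rank structure of the (reduced) operator $\cB^p$. Abbreviate $\Delta(\cdot):=\Delta_{\gamma,K}(\cdot,z,\lambda)$ and let $M$ be multiplication by $|\Delta(\cdot)|^{-1/2}$ on $L^2(\T^3)$. Since $0<\delta_{z,\lambda}\le|\Delta|\le 1+\lambda\,(\cE_{\gamma,\min}(K)-z)^{-1}<\infty$, the operator $M$ is bounded, self‑adjoint, positive and boundedly invertible, hence a linear isomorphism of $L^2(\T^3)$. Reading off the kernel of $\cB^p$ one has
$$
\cB^p=c\,M\,T\,M,\qquad c:=-\frac{\gamma\lambda\,\sign\Delta_{\gamma,K}}{(2\pi)^3 z^2}\ne 0,
$$
where $T$ is the integral operator on $L^2(\T^3)$ with (real, symmetric) kernel $\xi(K-p-q)$; in particular $\cB^p$ is self‑adjoint. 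Moreover $f\in\sH_\Delta$ if and only if $\int_{\T^3}(Mf)(s)\,ds=0$, so $M$ carries $\sH_\Delta$ isomorphically onto the zero‑mean subspace $\sH_0:=\{g\in L^2(\T^3):\int_{\T^3}g\,ds=0\}$; and since $\cP_\Delta$ acts as the identity on $\sH_\Delta$, for every $f\in\sH_\Delta$ we get $(\tilde\cB f,f)_{L^2}=(\cB^p f,f)_{L^2}=c\,(T\,Mf,Mf)_{L^2}$. Consequently the number of positive (resp. negative) eigenvalues of $\tilde\cB$ equals the positive (resp. negative) index of the quadratic form $g\mapsto c\,(Tg,g)_{L^2}$ on $\sH_0$.

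\textbf{Diagonalising $T$.} Using the orthogonal Fourier basis $\{e^{\mathrm{i}\,n\cdot q}\}_{n\in\Z^3}$ of $L^2(\T^3)$, a direct computation gives $T e^{\mathrm{i}\,n\cdot q}=0$ whenever $n\notin\{\pm e_1,\pm e_2,\pm e_3\}$, while $Te^{\pm\mathrm{i}q_i}=\tfrac{(2\pi)^3}{2}\,e^{\pm\mathrm{i}K_i}e^{\mp\mathrm{i}p_i}$. Hence $T$ leaves invariant each plane $\mathrm{span}\{e^{\mathrm{i}q_i},e^{-\mathrm{i}q_i}\}$, acting there with eigenvalues $\pm\tfrac{(2\pi)^3}{2}$ and (real) eigenfunctions $\cos(q_i-K_i/2)$ and $\sin(q_i-K_i/2)$, respectively. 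Therefore $T$ has eigenvalue $+\tfrac{(2\pi)^3}{2}$ of multiplicity three, eigenvalue $-\tfrac{(2\pi)^3}{2}$ of multiplicity three, and no other nonzero eigenvalue; in particular $\mathrm{ran}\,T=\cW:=\mathrm{span}\{\cos q_i,\sin q_i:i=1,2,3\}$ is six‑dimensional, is contained in $\sH_0$ (each of these functions has zero mean), and on $\cW$ the form $(T\,\cdot\,,\cdot\,)_{L^2}$ is nondegenerate of signature $(3,3)$.

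\textbf{Conclusion.} The three mutually orthogonal zero‑mean functions $\cos(q_i-K_i/2)$ span a three‑dimensional subspace of $\sH_0$ on which $c\,(T\,\cdot\,,\cdot\,)_{L^2}$ is positive definite when $c>0$ and negative definite when $c<0$, and symmetrically $\mathrm{span}\{\sin(q_i-K_i/2):i=1,2,3\}$ is a three‑dimensional subspace on which it has the opposite sign. Thus, in either case, $\sH_0$ contains a three‑dimensional subspace on which $c\,(T\,\cdot\,,\cdot\,)_{L^2}>0$ and another on which it is $<0$; transporting both back through the isomorphism $M^{-1}$ produces a three‑dimensional subspace of $\sH_\Delta$ on which $\tilde\cB$ is positive definite and one on which $\tilde\cB$ is negative definite. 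Hence $\tilde\cB$ has at least three positive and at least three negative eigenvalues. On the other hand $\operatorname{rank}\tilde\cB\le\operatorname{rank}\cB^p=\operatorname{rank}T=6$, so $\tilde\cB$ has at most six nonzero eigenvalues; combining the two bounds, $\tilde\cB$ has exactly three positive and three negative eigenvalues (and $0$ otherwise).

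\textbf{Main point of care.} I do not expect a genuine obstacle here — the computations are routine. The two ingredients that must be handled with a little care are the congruence $\cB^p=cMTM$ with $M$ an isomorphism mapping $\sH_\Delta$ onto $\sH_0$, which is precisely what lets one transfer the inertia across \emph{both} the zero‑mean constraint defining $\sH_\Delta$ and the projection $\cP_\Delta$ simultaneously, and the observation that $\mathrm{ran}\,T\subset\sH_0$, which guarantees that restricting to the codimension‑one subspace $\sH_0$ destroys none of the three positive and three negative directions of $T$.
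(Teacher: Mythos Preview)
Your argument is correct and is in fact cleaner than the paper's. The paper proceeds by first computing $\cA=\cP_\Delta({\cB^p})^*\cB^p\cP_\Delta$, reducing its eigenvalue problem to a $6\times 6$ matrix to conclude that $\tilde\cB$ has exactly six nonzero eigenvalues, and then invokes continuous dependence on $(K,\lambda,z)$ to reduce the sign question to the special case $K=0$, where the even/odd decomposition of $\T^3$ makes the signature visible by hand. Your route bypasses both the squaring step and the continuity argument: you write $\cB^p=cMTM$ as a congruence of the explicit rank-six operator $T$ by the bounded invertible $M$, diagonalize $T$ directly in the Fourier basis (signature $(3,3)$, with the six eigenfunctions $\cos(q_i-K_i/2)$, $\sin(q_i-K_i/2)$ all lying in $\sH_0$), and then transport the inertia through $M^{-1}:\sH_0\to\sH_\Delta$. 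This gives the signature for every $K$ at once, and the rank bound $\operatorname{rank}\tilde\cB\le\operatorname{rank}T=6$ closes the argument. The paper's approach has the minor advantage of setting up the operator $\cA_\alpha$ that is reused later in the proofs of Theorems~\ref{teo:exist_eigen_H} and~\ref{teo:exist_eigen_H_gap}, but for the lemma itself your Sylvester-type argument is more direct.
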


\begin{proof}
For shortness, let us write 
$$
\Delta(p):=|\Delta_{\gamma,K}(p,z,\lambda)|\quad\text{and}\quad \tilde c:=-\frac{\gamma\lambda}{(2\pi)^3}\,\frac{\sign\,\Delta_{\gamma,K}}{z^2}.
$$ 
Then by definition
$$
\cB^p f(p) = \tilde c \sum_{i=1}^3 \frac{\cos(K_i-p_i)}{\sqrt{\Delta(p)}}\int_{\T^3}\frac{\cos q_i f(q)dq}{\sqrt{\Delta(q)}} + \tilde c \sum_{i=1}^3 \frac{\sin(K_i-p_i)}{\sqrt{\Delta(p)}}\int_{\T^3}\frac{\sin q_i f(q)dq}{\sqrt{\Delta(q)}},
$$
and thus, 
\begin{multline*}
{\cB^p}^*\cB^p f(p) = \tilde c^2 \sum_{i=1}^3 \frac{\cos p_i}{\sqrt{\Delta(p)}} \int_{\T^3} \frac{\sum_{j=1}^3[a_{ij}\cos q_j+ b_{ij}\sin q_j]f(q)dq}{\sqrt{\Delta(q)}} \\ 
+  
\tilde c^2 \sum_{i=1}^3 \frac{\sin p_i}{\sqrt{\Delta(p)}} \int_{\T^3}\frac{\sum_{j=1}^3[b_{ij}\cos q_j+c_{ij}\sin q_j]f(q)dq}{\sqrt{\Delta(q)}},
\end{multline*}
where 
\begin{align*}
a_{ij} = \int_{\T^3} \frac{\cos(K_i-p_i)\cos(K_j-p_j)dp}{\Delta(p)},\\
b_{ij} = \int_{\T^3} \frac{\cos(K_i-p_i)\sin(K_j-p_j)dp}{\Delta(p)},\\
c_{ij} = \int_{\T^3} \frac{\sin(K_i-p_i)\sin(K_j-p_j)dp}{\Delta(p)}.
\end{align*}
Thus, ${\cB^p}^*\cB^p$ is a rank-six integral operator. Now  consider the projection $\cA:=\cP_\Delta {\cB^p}^*\cB^p\cP_\Delta.$ One can readily check that 
\begin{multline*}
\cA f(p) = \tilde c^2\sum_{i=1}^3 \psi_c(p_i) \int_{\T^3}\sum_{j=1}^3 [a_{ij}\psi_c(q_j) + b_{ij}\psi_s(q_j)]f(q)dq \\
+ \tilde c^2\sum_{i=1}^3 \psi_s(p_i) \int_{\T^3}\sum_{j=1}^3 [b_{ij}\psi_c(q_j) +  c_{ij}\psi_s(q_j)]f(q)dq,
\end{multline*}
where 
$$
\psi_c(p_i)= \frac{\cos p_i - C_i}{\sqrt{\Delta(p)}},\quad 
\psi_s(p_i)= \frac{\sin p_i - S_i}{\sqrt{\Delta(p)}},
$$
and 
\begin{equation}\label{lsuz78cdf}
C_i:= \int_{\T^3} \frac{\cos q_idq}{\Delta(q)}\Big(\int_{\T^3}\frac{dq}{\Delta(q)}\Big)^{-1}, 
\quad 
S_i:= \int_{\T^3} \frac{\sin q_idq}{\Delta(q)}\Big(\int_{\T^3}\frac{dq}{\Delta(q)}\Big)^{-1}.
\end{equation}
Let us show that $\cA$ has six positive eigenvalues. Since it has only rank 6 and $\psi_c(p_i)$ and $\psi_s(p_j)$ are linearly independent, the eigenvalue equation $\cA f(q) = \omega f(q)$ reduces to the study of the eigenvalue of some positive definite $6\times 6$-matrix $A,$ which surely have $6$ positive eigenvalues.

This observation yields that the operator $\cB^p$ has (exactly!) six nonzero eigenvalues for all $K\in\T^3,$ $\lambda>\bar\lambda_\gamma$ and $z\in (-\infty,\cE_{\gamma,\min}(K))\setminus [\tau_{\gamma,\min}(K,\lambda),\tau_{\gamma,\max}(K,\lambda)].$ Moreover, the eigenvalues continuously depend on $K,$ $\lambda$ and $z$ and never change the sign. Therefore, we can consider a particular value of the parameters. 

Let us assume that $K=0$ so that $S_i=0$ and $\psi_s$ are odd. Note also that $\psi_c$ are even.   First study the case $z\in (-\infty,\tau_{\gamma,\min}(0,\lambda)).$ In this case $\Delta_{\gamma,K}>0$ so that $\tilde c<0$ and 
\begin{equation}\label{bgdz67v}
\cP_\Delta\cB^p\cP_\Delta f(p) = \tilde c\sum_{i=1}^3 \psi_c(p_i)\int_{\T^3} \psi_c(q_i)f(q)dq -
\tilde c\sum_{i=1}^3 \psi_s(p_i) \int_{\T^3} \psi_s(q_i)f(q)dq.
\end{equation}
Thus, the subspaces $\sH_\Delta^e$ and $\sH_\Delta^o$ of even and odd functions in $\sH_\Delta, $ respectively, are invariant with respect to $\tilde\cB=\cP_\Delta\cB^p\cP_\Delta.$ One can readily check that 
$$
\tilde \cB\big|_{\sH_\Delta^e} f(p) = \tilde c\sum_{i=1}^3 \psi_c(p_i)\int_{\T^3} \psi_c(q_i)f(q)dq
$$
and 
$$
\tilde \cB\big|_{\sH_\Delta^o} f(p) = -\tilde c\sum_{i=1}^3 \psi_s(p_i)\int_{\T^3} \psi_s(q_i)f(q)dq.
$$
Since all summands are rank-one projections and $\tilde c<0,$ the rank-three operator $\tilde \cB\big|_{\sH_\Delta^e}$ is nonpositive definite and $\tilde \cB\big|_{\sH_\Delta^o}$ is nonnegative definite. Moreover, the former has exactly three negative eigenvalues and the latter has three positive eigenvalues. 
Thus, $\tilde \cB$ for $K=0$ (and hence, for all $K\in\T^3$) has exactly three positive and three negative eigenvalues.

Similarly, in case $z\in (\tau_{\gamma,\max}(0,\lambda),\cE_{\gamma,\min}(0))$ we have $\Delta_{\gamma,K}<0$ so that $\tilde c>0,$ and hence, recalling the representation \eqref{bgdz67v} of $\cP_\Delta \cB^p\cP_\Delta,$ we observe that $\tilde \cB\big|_{\sH_\Delta^e}\ge0$ and $\tilde \cB\big|_{\sH_\Delta^o}\le 0.$ Hence, again $\tilde \cB$ for $K=0$ (and hence, for all $K\in\T^3$) has exactly three positive and three negative eigenvalues.
\end{proof}

We are interested in the positive eigenvalues of $\tilde \cB,$ because when the residual part $\cP_\Delta\cB^r\cP_\Delta$  has small norm, those eigenvalues becomes the principal part of the positive eigenvalues of $\cB_{\gamma,K}(z,\lambda).$

Let us check whether the eigenvalues of $\cB^{r}:=\cB_{r,\gamma,K}(z,\lambda)$ does not vanish if we take large $\lambda$ and $z=z_\lambda.$ We provide the analysis for $\cA:=\cA(z,\lambda).$ To simplify the notations a bit, let 
us write $z(p):=z_{\gamma,\lambda}(K,p),$ $p_0:=q_{\gamma,\lambda}^{\min}(K)$ and $\cE(p,q):=\cE_{\gamma,K}(p,q).$

First consider the case $z<\tau_{\gamma,\min}(K,\lambda) = z(p_0).$ 
As we have seen earlier,  using $\Delta_{\gamma,K}(p,z(p),\lambda)=0$  we can write 
\begin{align*}
\Delta(p) = & \frac{\lambda}{(2\pi)^3} \int_{\T^3}\frac{(z(p) - z)ds}{(\cE(p,s) - z)(\cE(p,s) - z(p))}\\
= &  \frac{\lambda}{(2\pi)^3(-z)(-z(p))} \int_{\T^3}\frac{(z(p) - z)ds}{(1-\frac{\cE(p,s)}{z})(1-\frac{\cE(p,s) }{ z(p)})} \\
= & \frac{\lambda\,(z(p) - z)}{[-z]\,[-z(p_0)]\,[1- \frac{z(p)-z(p_0)}{-z(p_0)}]\, [1 + o(1)]}  , 
\end{align*}
where $o(1)\to0$ uniformly as $\lambda\to+\infty$ and in the last equality we used the asymptotics \eqref{asymp_zKp5e32} of $z(p):=z_{\gamma,\lambda}(K,p).$ To avoid dependence of $z$ on $\lambda, $  let us set $z= z(p_0) - \alpha$ for $\alpha>0.$ Then using the uniform boundedness of $z(p)-z(p_0)$ (especially, w.r.t. to $\lambda$) we get 
\begin{equation*}
%\label{Delta_chanegd}
\Delta(p) = \frac{\lambda(z(p)-z(p_0) + \alpha)}{[-z(p_0)+\alpha]\,[-z(p_0)]\,[1+o(1)]}.
\end{equation*}
Using this in the definitions of the constants $C_i$ and $S_i$ in \eqref{lsuz78cdf} 
we deduce 
\begin{align*}
C_i= \tilde C_i [1+o(1)],\quad \tilde C_i:=\int_{\T^3} \frac{\cos p_idp}{z(p)-z(p_0)+\alpha} \Big(\int_{\T^3}\frac{dp}{z(p)-z(p_0)+\alpha}\Big)^{-1},\\
S_i= \tilde S_i [1+o(1)],\quad \tilde S_i:=\int_{\T^3} \frac{\sin p_idp}{z(p)-z(p_0)+\alpha} \Big(\int_{\T^3}\frac{dp}{z(p)-z(p_0)+\alpha}\Big)^{-1}.
\end{align*}

remains bounded uniformly in $\lambda$ and $\alpha,$ and the constants $a_{ij},$ $b_{ij}$ and $c_{ij}$ have asymptotics 
\begin{align*}
a_{ij} = \frac{[-z(p_0)+\alpha]\,[-z(p_0)]\,[1+o(1)]}{\lambda} \tilde a_{ij},\quad \tilde a_{ij}:=\int_{\T^3}\frac{\cos (K_i-p_i)\cos (K_j-p_j)dp}{z(p)-z(p_0)+\alpha},\\
b_{ij} = \frac{[-z(p_0)+\alpha]\,[-z(p_0)]\,[1+o(1)]}{\lambda} \tilde b_{ij},\quad \tilde b_{ij}:=\int_{\T^3}\frac{\cos (K_i-p_i)\sin (K_j-p_j)dp}{z(p)-z(p_0)+\alpha},\\
c_{ij} = \frac{[-z(p_0)+\alpha]\,[-z(p_0)]\,[1+o(1)]}{\lambda} \tilde c_{ij},\quad \tilde c_{ij}:=\int_{\T^3}\frac{\sin (K_i-p_i)\sin (K_j-p_j)dp}{z(p)-z(p_0)+\alpha}
\end{align*}
as $\lambda\to+\infty.$ Finally, 
\begin{align*}
\psi_c(p_i) =  \sqrt{[-z(p_0)+\alpha]\,[-z(p_0)]\,[1+o(1)]}\tilde \psi_c(p_i),\quad 
\tilde \psi_c (p_i) = \frac{\cos p_i - \tilde C_i}{\sqrt{z(p)-z(p_0)+\alpha}},\\
\psi_s(p_i) =  \sqrt{[-z(p_0)+\alpha]\,[-z(p_0)]\,[1+o(1)]}\tilde \psi_s(p_i),\quad 
\tilde \psi_s (p_i) = \frac{\sin p_i - \tilde S_i}{\sqrt{z(p)-z(p_0)+\alpha}}.
\end{align*}
Inserting these asymptotics in the definition of $\cA$ we obtain 
$$
\cA = \tilde \cA + \cA_1,
$$
where $\|\cA_1\|\to0$ as $\lambda\to+\infty$ and  
\begin{multline*}
\tilde \cA f(p) = \frac{\gamma^2 \,[-z(p_0)]^2 }{(2\pi)^3[-z(p_0)+\alpha]^2}\,\Big(\sum_{i=1}^3 \tilde \psi_c(p_i) \int_{\T^3}\sum_{j=1}^3 [\tilde a_{ij}\tilde \psi_c(q_j) +\tilde  b_{ij} \tilde \psi_s(q_j)] f(q)dq \\
+ \sum_{i=1}^3\tilde  \psi_s(p_i) \int_{\T^3}\sum_{j=1}^3 [\tilde b_{ij}\tilde \psi_c(q_j) + \tilde c_{ij}\tilde \psi_s(q_j)]f(q)dq\Big).
\end{multline*}
Now we let $\lambda\to+\infty$ in $\tilde \cA.$ Then  by Lemma \ref{lem:min_max_zgamlamK} $p_0\to0$ so that by \eqref{asymptotic} and \eqref{w26tza1} $z(p)-z(p_0) \to \epsilon(p),$ and by \eqref{asymp_zKp5e32} $
\frac{\gamma^2 \,[-z(p_0)]^2 }{[-z(p_0)+\alpha]^2}\to\gamma^2.$ Therefore, $\cA = \gamma^2\cA_\alpha + o(1),$ where $\|o(1)\|\to0$ and 
\begin{multline*}
\cA_\alpha f(p) = \frac{1}{(2\pi)^3}\,\Big(\sum_{i=1}^3 \bar \psi_c(p_i) \int_{\T^3}\sum_{j=1}^3 [\bar a_{ij}\bar \psi_c(q_j) +\bar  b_{ij} \bar \psi_s(q_j)] f(q)dq \\
+ \sum_{i=1}^3\bar  \psi_s(p_i) \int_{\T^3}\sum_{j=1}^3 [\bar b_{ij}\bar \psi_c(q_j) + \bar c_{ij}\bar\psi_s(q_j)]f(q)dq\Big),
\end{multline*}
where the $\bar \psi_c,$ $\bar \psi_s,$ $\bar a_{ij},$ $\bar b_{ij}$ and $\bar c_{ij}$ are defined as their counterparts $\tilde \psi_c,$ $\tilde \psi_s,$ $\tilde a_{ij},$ $\tilde b_{ij}$ and $\tilde c_{ij},$ with $z(p)-z(p_0)$ replaced by $\epsilon(p).$
In particular, $\cA_\alpha$ is independent of $\gamma.$

Note that by the dominated convergence, we can even take $\alpha=0.$  Since $\cA_0$ has exactly six positive eigenvalues, the three positive eigenvalues of $\tilde \cB$ is uniformly bounded away from zero for all large $\lambda$ (depending only on $\gamma$).

Analogously, we will show that the three positive eigenvalues of $\tilde \cB$ for $z\in (\tau_{\gamma,\max}(K,\lambda),\cE_{\gamma,\min}(K))$ is uniformly bounded away from zero provided we choose $z$ of the form $\tau_{\gamma,\max}(K,\lambda) +\alpha$ for $\alpha>0.$ One can readily check that also in this case the principal part of $\cA$ is exactly $\gamma^2\cA_\alpha.$

\subsection{Discrete spectrum of $H_{\gamma,\lambda}(K)$}

In this section we prove Theorems \ref{teo:exist_eigen_H} and \ref{teo:exist_eigen_H_gap}. We start with the eigenvalues below the essential spectrum. 

\begin{proof}[Proof of Theorem \ref{teo:exist_eigen_H}]
Indeed, let $\tilde \beta_{\gamma,K}^i(z,\lambda),$ $i=1,2,3$ be the positive eigenvalues of $\tilde \cB.$ Then $[\tilde \beta_{\gamma,K}^i(z,\lambda)]^2$ are eigenvalues of $\tilde \cB^2 = \cA.$ Choosing $z=z(p_0)-\alpha$ and using the perturbation theory methods we find that the limit
$$
\tilde \beta_{\gamma,K}^i(\alpha):=\lim\limits_{\lambda\to+\infty} \tilde \beta_{\gamma,K}^i(z(p_0)-\alpha,\lambda)
$$
exists and $[\tilde \beta_{\gamma,K}^i(\alpha)]^2$ is a positive eigenvalue of $\gamma^2\cA_\alpha.$ Since $\cA_\alpha$ is independent of $\gamma,$ we have 
$$
\tilde \beta_{\gamma,K}^i(\alpha) = \gamma \tilde \beta_K^i (\alpha)
$$
for some $\tilde \beta_K^i (\alpha)>0.$
Moreover, as $\|\cA_\alpha\|\to0$ as $\alpha\to+\infty,$ we have $\tilde \beta_K^i (\alpha)\to0$ as $\alpha\to+\infty.$ Now we let 
$$
\gamma_1^{-1}:=\max_{i=1,2,3}\,\sup_{\alpha>0} \tilde \beta_K^i (\alpha)>0.
$$
If $\gamma\in (0,\gamma_1),$ then $\gamma \tilde \beta_{K}^i(\alpha)<1$ for all $\alpha\ge0$ and $i=1,2,3.$ In particular, taking $\lambda $ large, we conclude that all eigenvalues of
$\tilde \cB$ are strictly less than $1.$ Since $\cB_\Delta=\tilde \cB + \cP_\Delta \cB^r\cP_\Delta$ and the operator $\cP_\Delta \cB^r\cP_\Delta$ is norm-small for large $\lambda,$ it follows that $\cB_\Delta $ has no eigenvalues greater than $1.$

When $\gamma>\gamma_1,$ then there exists $\alpha>0$ such that $\tilde \beta_K^i(\alpha)>1$ for some $i=1,2,3.$ Then for sufficiently large $\lambda,$ the operator $\cB_\Delta$ has an eigenvalue greater than $1.$ Since $\cB_\Delta:=\cB_\Delta(z(p_0)-\alpha,\lambda)\to0$ as $\alpha\to+\infty,$ there exists $\alpha_\lambda$ such that $1$ is an eigenvalue of $\cB_\Delta(z(p_0)-\alpha_\lambda,\lambda).$ Now Theorem \ref{teo:bsh_principle} implies that $z(p_0)-\alpha_\lambda$ is an eigenvalue of $H_{\gamma,\lambda}(K).$

Similarly, setting 
$$
\tilde \gamma_1^{-1}:=\min_{1=1,2,3}\sup_{\alpha>0} \tilde \beta_K^i(\alpha)>0,
$$
we have that for any $\gamma>\tilde \gamma_1$ and for any large $\lambda$ the operator $H_{\gamma,\lambda}(K)$ has at least three eigenvalues below the essential spectrum.
\end{proof}

%\begin{remark}\label{rem:three_eigen_below_essspec}
%Let 
%$$
%\hat \gamma_1^{-1}:=\max_{1=1,2,3}\sup_{\alpha>0} \tilde \beta_K^i(\alpha)>0.
%$$ 
%Then for any $\gamma>\hat \gamma_1$ and for any large $\lambda$ the operator $H_{\gamma,\lambda}(K)$ has at least one eigenvalue below the essential spectrum. If, additionally, 
%$$
%\tilde \gamma_1^{-1}:=\min_{1=1,2,3}\sup_{\alpha>0} \tilde \beta_K^i(\alpha)>0,
%$$
%then for any $\gamma>\tilde \gamma_1$ and for any large $\lambda$ the operator $H_{\gamma,\lambda}(K)$ has at least three eigenvalues below the essential spectrum
%\end{remark}

Repeating similar arguments we study the eigenvalues in the gap of the essential spectrum.

\begin{proof}[Proof of Theorem \ref{teo:exist_eigen_H_gap}]
By \eqref{asymp_zKp5e32}, \eqref{asymptotic} and Lemma \ref{lem:min_max_zgamlamK}
\begin{equation*}%\label{batgadac1}
\tau_{\gamma,\max}(K,\lambda) = z_{\gamma,\lambda}(K,q_{\gamma,\lambda}^{\max}(K)) = -\lambda + 3(3+\gamma) + O(\lambda^{-1})\quad\text{as $\lambda\to+\infty.$}
\end{equation*}
Thus, for $\lambda$ and $z\in [\tau_{\gamma,\max}(K,\lambda), \tau_{\gamma,\max}(K,\lambda)+ T_\lambda]$ by \eqref{resi_2sd3} 
we have 
$$
\|\cB^r(z,\lambda)\| \le \frac{C_\gamma(1+\lambda)}{z^2} \le \frac{C_\gamma(1+\lambda)}{(\lambda - 3(3+\gamma) - O(\lambda) - T_\lambda)^2} \to 0
$$
as $\lambda\to+\infty$ provided that 
$$
\lim\limits_{\lambda\to+\infty} \frac{\lambda}{T_\lambda} = +\infty.
$$
Thus, for large $\lambda$ the residual of $\cB_{\gamma,K}(z,\lambda)$ is small.

Now we study the principal part $\cB^p$ of $\cB_{\gamma,K}(z,\lambda).$ As in the proof of Theorem \ref{teo:exist_eigen_H},  let $\tilde \beta_{\gamma,K}^i(z,\lambda),$ $i=1,2,3$ be the positive eigenvalues of $\tilde \cB:=\cP_\Delta \cB^p\cP_\Delta.$ Then $[\tilde \beta_{\gamma,K}^i(z,\lambda)]^2$ are eigenvalues of $\tilde \cB^2 = \cA.$ Choosing $z=\tau_{\gamma,\max}(K,\lambda)+\alpha$ and using the perturbation theory methods we find that the limit
$$
\tilde \beta_{\gamma,K}^i(\alpha):=\lim\limits_{\lambda\to+\infty} \tilde \beta_{\gamma,K}^i(\lambda,\tau_{\gamma,\max}(K,\lambda)+\alpha)
$$
exists and $[\tilde \beta_{\gamma,K}^i(\alpha)]^2$ is a positive eigenvalue of $\gamma^2\cA_\alpha.$ Since $\cA_\alpha$ is independent of $\gamma,$ we have 
$$
\tilde \beta_{\gamma,K}^i(\alpha) = \gamma \tilde \beta_K^i (\alpha)
$$
for some $\tilde \beta_K^i (\alpha)>0.$ By the choice of the interval of $z,$ we have $\alpha \in [0,+\infty)$ and $\|\cA_\alpha\|\to0$ as $\alpha\to+\infty.$ In particular, $\tilde \beta_K^i (\alpha)\to0$ as $\alpha\to+\infty.$ 
Now the remaining assertions run as in the proof of Theorem \ref{teo:exist_eigen_H}.
\end{proof}

%\begin{remark}\label{rem:three_eigen_gap_essspec}
%As in Remark \ref{rem:three_eigen_below_essspec} there exists $\hat\gamma_2>0$ such that for any $\gamma>\hat \gamma_2$ and for large $\lambda$ the operator $H_{\gamma,\lambda}(K)$ has at least three eigenvalues in the gap $(\tau_{\gamma,\max}(K,\lambda), \tau_{\gamma,\max}(K,\lambda) + T_\lambda).$
%\end{remark}

\section{On the discrete spectrum of $H_{\gamma,\lambda}(0)$}

In this section we assume that $K=0$ and study some spectral properties of $H_{\gamma,\lambda}(0)$ for large $\lambda.$ Unlike the general $K,$ we can describe the discrete spectrum of $H_{\gamma,\lambda}(0)$ more accurately in this case.

\subsection{The invariant subspaces of $\cB_{\gamma,0}(z,\lambda)$}

In this section we assume $\gamma>0,$ $\lambda>0,$ $K=0$ and provide a detailed analysis of the discrete spectrum of $\cB_{\gamma,0}(z,\lambda).$
We start by studying its invariant subspaces. Recall that in the proof of Lemma \ref{lem:eigens_BDelta} we have used invariant subspaces of $\tilde \cB$ -- the principal part of $\cB_{\gamma,0}.$

Note that  
$$
\tau_{\gamma,\min}(0,\lambda) \le \cE_{\gamma,\min}(0) = 0.
$$
As $\cE_{\gamma,0}(\cdot,\cdot)$ and $|\Delta_{\gamma,0}(\cdot,z,\lambda)|$ are even functions in $\T^3\times\T^3$ and $\T^3,$ respectively, the spaces $L^{2,e}(\T^3)$ and $L^{2,o}(\T^3)$ of (essentially) even and odd functions in $\T^3$ are invariant subspaces of 
$\cB_{\gamma,0}(z,\lambda).$ 
Set 
\begin{equation}\label{even_odd_parts01e}
\cB^e(z,\lambda) := \cB_{\gamma,0}(z,\lambda) \Big|_{L^{2,e}(\T^3)}
\quad\text{and}\quad 
\cB^o(z,\lambda) := \cB_{\gamma,0}(z,\lambda) \Big|_{L^{2,o}(\T^3)},
\end{equation}
where for simplicity we omit the dependence on $\gamma.$ 
One can readily check that $\cB^{e}$ and $\cB^o$ are integral operators with kernels 
$$
A^e(p,q,z,\lambda):=
\mp \frac{\lambda}{(2\pi)^3}\,\frac{E_c(p,q)-z}{\sqrt{|\Delta_\gamma(p,z,\lambda)|}(\cE_{\gamma,0}(p,q) - z)(\cE_{\gamma,0}(p,-q)-z)\sqrt{|\Delta_\gamma(q,z,\lambda)|}}
$$
and 
$$
A^o(p,q,z,\lambda):=
\pm \frac{\lambda}{(2\pi)^3}\,\frac{E_s(p,q)}{\sqrt{|\Delta_\gamma(p,z,\lambda)|}(\cE_{\gamma,0}(p,q) - z)(\cE_{\gamma,0}(p,-q)-z)\sqrt{|\Delta_\gamma(q,z,\lambda)|}},
$$
respectively, where we take the $+$ if $z<\tau_{\gamma,\min}(0,\lambda)$ and the $-$ if $z\in (\tau_{\gamma,\max}(0,\lambda),\cE_{\gamma,\min}(0)),$ 
$$
E_c(p,q):= \sum_{i=1}^3 (2+\gamma - \cos p_i - \cos q_i - \gamma \cos p_i\cos q_i)
$$
and 
\begin{equation}\label{E_sinus01}
E_s(p,q) = \gamma\sum_{i=1}^3\sin p_i\sin q_i
\end{equation}
are even and odd parts of $\cE_{\gamma,0}= E_c + E_s.$ 

First study the case of $z$ below the essential spectrum.

\begin{proposition}[$z$ is below the essential spectrum]\label{prop:property_even_odds}
Let $z<\tau_{\gamma,\min}(0,\lambda)\le0.$ Then
\begin{equation*}%\label{B_pm_nonnegative}
\cB^e(z,\lambda)\le 0\quad \text{and}\quad \cB^o(z,\lambda)\ge0.
\end{equation*}
Moreover:
\begin{itemize}
\item[\rm(a)] writing  $L^{2,o}(\T^3)$ as the direct sum 
$$
L^{2,o}(\T^3) = \sH_1\oplus \sH_2 \oplus \sH_3 \oplus \sH_{123},
$$
where 
\begin{equation}\label{odd_subspaces0192}
\begin{gathered}
\sH_1:=L^{2,o}(\T)\otimes L^{2,e}(\T)\otimes L^{2,e}(\T),\\
\sH_2:=L^{2,e}(\T)\otimes L^{2,o}(\T)\otimes L^{2,e}(\T),\\
\sH_3:=L^{2,e}(\T)\otimes L^{2,e}(\T)\otimes L^{2,o}(\T),\\
\sH_{123}:=L^{2,o}(\T)\otimes L^{2,o}(\T)\otimes L^{2,o}(\T)
\end{gathered}
\end{equation}
are respectively the Hilbert spaces of all functions, essentially odd with respect to first, second, third and all coordinates, but even with respect to each remaining coordinates, we have that each $\sH_\zeta$ is an invariant subspace of $\cB^o(z,\lambda);$

\item[\rm(b)] the restrictions $\cB^o(z,\lambda)\big|_{\sH_\zeta}$ for $\zeta=1,2,3$ are mutually unitarily equivalent.

\end{itemize}

\end{proposition}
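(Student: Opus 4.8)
The plan is to read off all three assertions directly from the explicit form of $\cB_{\gamma,0}(z,\lambda)$, exploiting the symmetries of $\cE_{\gamma,0}(p,q)=\epsilon(p)+\epsilon(q)+\gamma\epsilon(p+q)$ and of $|\Delta_{\gamma,0}(\cdot,z,\lambda)|$. Throughout I write $d\mu(p):=dp/\sqrt{|\Delta_{\gamma,0}(p,z,\lambda)|}$, which is a finite measure on $\T^3$ because $|\Delta_{\gamma,0}(\cdot,z,\lambda)|\ge\delta_{z,\lambda}>0$.

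\textbf{Sign definiteness.} Since $\sign\Delta_{\gamma,0}=+1$ for $z<\tau_{\gamma,\min}(0,\lambda)$, for real $\phi\in L^2(\T^3)$ one has $(\cB_{\gamma,0}(z,\lambda)\phi,\phi)_{L^2}=-\frac{\lambda}{(2\pi)^3}Q(\phi)$ with $Q(\phi):=\iint_{(\T^3)^2}\frac{\phi(p)\phi(q)}{\cE_{\gamma,0}(p,q)-z}\,d\mu(p)\,d\mu(q)$, so, granting the invariance of $L^{2,e}(\T^3)$ and $L^{2,o}(\T^3)$ (part (a) below), it suffices to show $Q\ge0$ on $L^{2,e}(\T^3)$ and $Q\le0$ on $L^{2,o}(\T^3)$. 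As $z<\cE_{\gamma,\min}(0)=0\le\cE_{\gamma,0}(p,q)$ one may write $\frac{1}{\cE_{\gamma,0}(p,q)-z}=\int_0^\infty e^{tz}e^{-t\cE_{\gamma,0}(p,q)}\,dt$ (Fubini being legitimate since $\cE_{\gamma,0}-z\ge|z|$ and $\mu$ is finite), and expand
$$
e^{-t\gamma\epsilon(u)}=\prod_{j=1}^3 e^{-t\gamma}e^{t\gamma\cos u_j}=\sum_{n\in\Z^3}c_n(t)e^{in\cdot u},\qquad c_n(t)=e^{-3t\gamma}\prod_{j=1}^3 I_{n_j}(t\gamma)\ge0,
$$
$I_k$ being the modified Bessel function (equivalently, $e^{-t\gamma\epsilon}$ is a positive-definite function, $\epsilon$ being conditionally negative definite). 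Substituting and using $\cE_{\gamma,0}(p,q)=\epsilon(p)+\epsilon(q)+\gamma\epsilon(p+q)$ gives
$$
Q(\phi)=\int_0^\infty e^{tz}\sum_{n\in\Z^3}c_n(t)\,G_n(t)^2\,dt,\qquad G_n(t):=\int_{\T^3}\phi(p)e^{-t\epsilon(p)}e^{in\cdot p}\,d\mu(p).
$$
Because $e^{-t\epsilon}$ and $\mu$ are invariant under $p\mapsto-p$, for even $\phi$ only the $\cos(n\cdot p)$ part of $e^{in\cdot p}$ contributes, so $G_n(t)\in\R$ and $Q(\phi)\ge0$; for odd $\phi$ only the $\sin(n\cdot p)$ part contributes, so $G_n(t)\in i\R$, $G_n(t)^2\le0$, and $Q(\phi)\le0$. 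Hence $\cB^e(z,\lambda)\le0$ and $\cB^o(z,\lambda)\ge0$.

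\textbf{Part (a).} Splitting each factor of $L^2(\T^3)=L^2(\T)^{\otimes3}$ into even and odd parts, a function is odd on $\T^3$ exactly when it lies in the span of the four summands $L^{2,\sigma_1}(\T)\otimes L^{2,\sigma_2}(\T)\otimes L^{2,\sigma_3}(\T)$ with an odd number of $o$'s among $\sigma_1,\sigma_2,\sigma_3$, i.e. $\sH_1\oplus\sH_2\oplus\sH_3\oplus\sH_{123}$. For invariance, let $P_i$ be the unitary reflection $p_i\mapsto-p_i$ on $L^2(\T^3)$. The kernel of $\cB_{\gamma,0}(z,\lambda)$ equals $-\frac{\lambda\sign\Delta_{\gamma,0}}{(2\pi)^3}\big[\sqrt{|\Delta_{\gamma,0}(p,z,\lambda)|}(\cE_{\gamma,0}(p,q)-z)\sqrt{|\Delta_{\gamma,0}(q,z,\lambda)|}\big]^{-1}=:c(p,q)$. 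Now $|\Delta_{\gamma,0}(\cdot,z,\lambda)|$ is even in each coordinate (in the defining integral, flip $p_i$ and then substitute $q_i\mapsto-q_i$: the $i$-th term of $\epsilon(p+q)$ is restored), and $\cE_{\gamma,0}(P_ip,P_iq)=\cE_{\gamma,0}(p,q)$ because $\epsilon$ is coordinatewise even and $P_ip+P_iq=P_i(p+q)$; hence $c(P_ip,P_iq)=c(p,q)$, which together with $P_i^2=\mathrm{id}$ yields $c(P_ip,q)=c(p,P_iq)$. Thus $\cB_{\gamma,0}(z,\lambda)$ commutes with every $P_i$, hence with their spectral projections, hence leaves every joint-parity subspace — in particular each $\sH_\zeta$ — invariant, and so does $\cB^o(z,\lambda)=\cB_{\gamma,0}(z,\lambda)\big|_{L^{2,o}(\T^3)}$.

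\textbf{Part (b) and the main obstacle.} For $\pi\in S_3$ let $U_\pi$ permute the three coordinates of $L^2(\T^3)$; since $\epsilon$ is symmetric in its coordinates, $\cE_{\gamma,0}(\pi p,\pi q)=\cE_{\gamma,0}(p,q)$ and (substituting $q\mapsto\pi q$ in the defining integral) $|\Delta_{\gamma,0}(\pi p,z,\lambda)|=|\Delta_{\gamma,0}(p,z,\lambda)|$, so $c(\pi p,\pi q)=c(p,q)$ and $\cB_{\gamma,0}(z,\lambda)$ commutes with every $U_\pi$. The transposition $U_{(1i)}$ carries $\sH_1$ unitarily onto $\sH_i$ and commutes with $\cB_{\gamma,0}(z,\lambda)$, hence intertwines $\cB^o(z,\lambda)\big|_{\sH_1}$ and $\cB^o(z,\lambda)\big|_{\sH_i}$ for $i=2,3$; so these three restrictions are mutually unitarily equivalent. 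The only genuinely analytic step is the sign-definiteness of $Q$: it rests entirely on the positivity of the Fourier coefficients of the heat factor $e^{-t\gamma\epsilon(p+q)}$ and on the argument being $p+q$ rather than $p-q$, which is precisely what turns $G_n(t)^2\ge0$ on even functions into $G_n(t)^2\le0$ on odd ones. Parts (a) and (b) are routine reflection/permutation bookkeeping.
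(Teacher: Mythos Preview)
Your proof is correct. For parts (a) and (b) you and the paper do essentially the same thing: the paper simply asserts that these ``can be directly checked via a corresponding change of variables,'' whereas you spell out the commuting reflections $P_i$ and permutations $U_\pi$ explicitly. The genuine difference is in the sign-definiteness argument. The paper proceeds by a double geometric-series expansion: writing $\cE_{\gamma,0}=E_c+E_s$ with $E_c$ even and $E_s$ odd in each $(p_i,q_i)$, it expands $1/(\cE_{\gamma,0}-z)$ first in powers of $E_s/(E_c-z)$, and then each $1/(E_c-z)$ in powers of $\hat E_c/(\hat\gamma-z)$, where $E_c=\hat\gamma-\hat E_c$ and $\hat E_c(p,q)=\sum_i(\gamma^{-1/2}+\gamma^{1/2}\cos p_i)(\gamma^{-1/2}+\gamma^{1/2}\cos q_i)$ is separable. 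This exhibits the kernels $A^e$ and $A^o$ as convergent sums of rank-one positive kernels $\phi_j(p)\phi_j(q)$ (arising from the multinomial expansion of $\hat E_c^{\,l}E_s^{\,m}$), with the correct overall sign. Your Laplace/Jacobi--Anger route is more conceptual and avoids this bookkeeping: the single observation that $e^{-t\gamma\epsilon}$ has nonnegative Fourier coefficients (equivalently, $\epsilon$ is conditionally negative definite on $\T^3$) does all the work, and the even/odd sign flip is cleanly encoded in whether $G_n(t)$ is real or purely imaginary. The trade-off is that the paper's expansion is tailored to what follows --- it immediately isolates the principal term $E_s/(\hat\gamma-z)^2$ (and, in the gap case, $\hat E_c/(\hat\gamma-z)^2$) used in Lemmas~\ref{lem:residual_small} and~\ref{lem:residual_B_e} to control $\cB^{o,r}$ and $\cB^{e,r}$ for large $\lambda$, so the series does double duty; your heat-kernel identity would need to be supplemented by a separate large-$|z|$ expansion for that subsequent analysis.
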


\begin{proof}
Note that $E_c\ge0.$  Summing the inequalities $\cE_{\gamma,0}(p,q)=E_c+E_s\ge0$ and $\cE_{\gamma,0}(p,-q)=E_c-E_s\ge0$  we get
$
|E_s| \le E_c.
$
Thus, for any $z<0$
\begin{equation*}%\label{nice_expancion_Eg0}
\frac{1}{\cE_{\gamma,0}-z} = \frac{1}{E_c-z} \frac{1}{1 + \frac{E_s}{E_c-z}}
= \frac{1}{E_c-z}\sum_{m\ge0} \Big(\frac{E_s}{E_c-z}\Big)^{2m} - \frac{1}{E_c-z}\sum_{m\ge0} \Big(\frac{E_s}{E_c-z}\Big)^{2m+1} =: E^e - E^o,
\end{equation*}
where both series converge uniformly in $\T^3\times\T^3.$ Clearly, 
$$
E^e(p,q) = \frac{E_c(p,q) - z}{(\cE_{\gamma,0}(p,q)-z)(\cE_{\gamma,0}(p,-q)-z)}
\quad \text{and} \quad 
E^o(p,q) = \frac{E_s(p,q)}{(\cE_{\gamma,0}(p,q)-z)(\cE_{\gamma,0}(p,-q)-z)},
$$
in particular, both $E^e$ and $E^o$ are symmetric, and 
\begin{equation}\label{evenlar0}
E^e(\pm p,\pm q)=E^e(\pm p,\mp q)=E^e(p,q)\quad \text{and} \quad E^o(\pm p,\pm q)=E^o(p,q) = -E^o(\pm p,\mp q).
\end{equation}
We represent $E_c$ as 
\begin{align*}
E_c(p,q) = \sum_{i=1}^3\Big[2+\gamma + \frac{1}{\gamma} - \Big(\frac{1}{\sqrt\gamma} + \sqrt\gamma\cos p_i\Big)\Big(\frac{1}{\sqrt\gamma} + \sqrt\gamma\cos q_i\Big)\Big] 
=:   \hat \gamma -
\hat E_c(p,q)\ge0,
\end{align*}
where
\begin{equation}\label{hat_E_cosinus01}
\hat \gamma :=
3\Big(2+\gamma + \frac{1}{\gamma}\Big),\quad 
\hat E_c(p,q) := \sum_{i=1}^3  \Big(\frac{1}{\sqrt\gamma} + \sqrt\gamma\cos p_i\Big)\Big(\frac{1}{\sqrt\gamma} + \sqrt\gamma\cos q_i\Big).
\end{equation}
Then using $|\hat E_c |\le \hat\gamma$ and $z<0$ we can write 
\begin{equation*}%\label{hszetra}
\frac{1}{E_c -z} = \frac{1}{\hat \gamma -z} \cdot \frac{1}{1 - \frac{\hat E_c }{\hat \gamma-z}} = 
\frac{1}{\hat \gamma -z} \sum_{n\ge0 } \Big(\frac{\hat E_c }{\hat \gamma - z}\Big)^n.
\end{equation*}
As a result, 
\begin{equation*}%\label{D7zshed}
\frac{1}{E_c-z} \Big(\frac{E_s}{E_c-z}\Big)^{2m} = E_s^{2m} \Big( \frac{1}{\hat \gamma -z} \sum_{n\ge0 } \Big(\frac{\hat E_c }{\hat \gamma - z}\Big)^n \Big)^{2m+1}  
= \sum_{i_1,\ldots,i_{2m+1}\ge0} \frac{\hat E_c^{i_1+\ldots i_{2m+1}}E_s^{2m}}{(\hat \gamma-z)^{i_1+\ldots+i_{2m+1}+2m+1}}
\end{equation*}
and 
\begin{multline}
\label{D9zshed}
\frac{1}{E_c-z} \Big(\frac{E_s}{E_c-z}\Big)^{2m+1} = E_s^{2m+1} \Big( \frac{1}{\hat \gamma -z} \sum_{n\ge0 } \Big(\frac{\hat E_c }{\hat \gamma - z}\Big)^n \Big)^{2m+2} \\
= \sum_{i_1,\ldots,i_{2m+2}\ge0} \frac{\hat E_c^{i_1+\ldots i_{2m+2}}E_s^{2m+1}}{(\hat \gamma-z)^{i_1+\ldots+i_{2m+2}+2m+2}}
\end{multline}
which all series converge uniformly in $\T^3\times\T^3.$
Thus, both $\cB^e$ and $\cB^o$ are represented as a convergent sum
\begin{equation}\label{zaus76w}
\cB^e:= -
\sum_{m\ge0} \sum_{i_1,\ldots,i_{2m+1}\ge0} \cB_{2m,i_1,\ldots,i_{2m+1}}\quad\text{and}\quad 
\cB^o:=
 \sum_{m\ge0} \sum_{i_1,\ldots,i_{2m+2}\ge0} \cB_{2m+1,i_1,\ldots,i_{2m+2}}
\end{equation}
of integral operators $\cB_{m,i_1,\ldots,i_{m+1}}$ with symmetric kernels
\begin{equation}\label{kernel_sums921}
A_{m,i_1,\ldots,i_{m+1}}(p,q) = \frac{\lambda}{(2\pi)^3}\frac{1}{(\hat\gamma-z)^{i_1+\ldots,i_{m+1}+m+1}} \frac{\hat E(p,q)^{i_1+\ldots+i_{m+1}}E_s(p,q)^{m}}{\sqrt{|\Delta(p)|}\sqrt{\Delta(q)}},%\quad m,i_1,\ldots,i_{m+1}\ge0,
\end{equation}
where for shortness we set $\Delta(p):=\Delta_{\gamma,0}(p,z,\lambda)$ and used $z<\tau_{\gamma,\min}(0,\lambda).$ In view of \eqref{E_sinus01} and \eqref{hat_E_cosinus01} the function $\hat E_c^lE_s^m$ is a finite sum of cosines and sines the form $\phi_j(p)\phi_j(q).$ 
Thus, in view of \eqref{kernel_sums921},
$$
(\cB_{m,i_1,\ldots,i_{m+1}}f,f)_{L^2} = \frac{\lambda}{(2\pi)^3}\frac{1}{(\hat\gamma-z)^{i_1+\ldots,i_{m+1}+m+1}} \sum_j \Big|\Big(f,\frac{\phi_j}{\sqrt{|\Delta|}}\Big)_{L^2}\Big|^2\ge0.
$$
Using this in \eqref{zaus76w} we deduce $\cB^e\le0$ and $\cB^o\ge0.$

The assertions (a) and (b) can be directly checked via a corresponding change of variables.
\end{proof}

Now consider $z$ belonging to the gap, assuming it exists (for instance $\lambda>3(3+\gamma+|1-\gamma|)$ so that by \eqref{two_part_ess_K0} $\tau_{\gamma,\max}(0,\lambda)<0=\cE_{\gamma,\min}(0)$).

\begin{proposition}[$z$ is in the gap]\label{prop:z_in_gap}
Assume that $z\in (\tau_{\gamma,\max}(0,\lambda), \cE_{\gamma,\min}(0)).$ Then 
\begin{equation*}%\label{be_mus_be_max09}
\cB^e(z,\lambda)\ge0 \quad\text{and}\quad  \cB^o(z,\lambda)\le0.
\end{equation*}
Moreover, writing $L^{2,e}(\T^3)$ as the direct sum 
$$
L^{2,e}(\T^3) = L^{2,e,s_2}(\T^3) \oplus L^{2,e,a_2}(\T^3) \oplus L^{2,e,\perp}(\T^3),
$$
where $L^{2,e,s_2}(\T^3)$ is the Hilbert space of all even functions on $\T^3$, symmetric with respect to permutations of any two variables $p_i$ and $p_j,$ $L^{2,e,a_2}(\T^3)$  is the Hilbert space of even functions on $\T^3,$ antisymmetric with respect to permutations of a fixed pair of coordinates, say $p_2$ and $p_3,$ and $L^{2,e,\perp}(\T^3)$ is the orthogonal complement of $L^{2,e,s_2}\oplus L^{2,e,a_2},$ we have that each $L^{2,e,\zeta}(\T^3),$ $\zeta\in\{s_2,a_2,\perp\}$ is an invariant subspace of $\cB^e(z,\lambda).$ 

\end{proposition}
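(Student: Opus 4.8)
The plan is to run the proof of Proposition~\ref{prop:property_even_odds} again, noting that the passage of $z$ from below the essential spectrum into the gap changes only the sign $\sign\Delta_{\gamma,0}$ occurring in the kernels $A^e$ and $A^o$: now $z$ sits above the two-particle branch, so $\Delta_{\gamma,0}(p,z,\lambda)<0$ for every $p\in\T^3$, and hence the kernels carry the opposite overall sign to the one in the previous proposition. Since $z\in(\tau_{\gamma,\max}(0,\lambda),\cE_{\gamma,\min}(0))=(\tau_{\gamma,\max}(0,\lambda),0)$ still satisfies $z<0$ (whence $\hat\gamma-z>0$, where $\hat\gamma:=3(2+\gamma+\gamma^{-1})$ as in \eqref{hat_E_cosinus01}), all the uniformly convergent expansions used there remain valid verbatim, together with the fact that each $\hat E_c^{\,\ell}E_s^{\,m}$ is a finite sum of products $\phi_j(p)\phi_j(q)$ with the \emph{same} trigonometric polynomial $\phi_j$ in both arguments, which is immediate from \eqref{E_sinus01} and \eqref{hat_E_cosinus01}. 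Decomposing $\cB^e$ and $\cB^o$ as in \eqref{zaus76w} into the operators $\cB_{m,i_1,\dots,i_{m+1}}$ of \eqref{kernel_sums921}, each of which is nonnegative (a finite sum of positively weighted rank-one projections, as $\hat\gamma-z>0$), the flipped overall sign then yields $\cB^e(z,\lambda)=\sum_{m\ge0}\sum_{i_1,\dots,i_{2m+1}\ge0}\cB_{2m,i_1,\dots,i_{2m+1}}\ge0$ and $\cB^o(z,\lambda)=-\sum_{m\ge0}\sum_{i_1,\dots,i_{2m+2}\ge0}\cB_{2m+1,i_1,\dots,i_{2m+2}}\le0$, which is the first assertion.

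For the decomposition of $L^{2,e}(\T^3)$, I would argue by symmetry. The functions $\cE_{\gamma,0}$, $E_c$, $E_s$ and $p\mapsto\Delta_{\gamma,0}(p,z,\lambda)$ are all invariant under the simultaneous permutation of the three coordinates of their arguments, so the kernel $A^e(p,q,z,\lambda)$ is invariant under the diagonal action of $S_3$ on $\T^3\times\T^3$; hence $\cB^e(z,\lambda)$ commutes with the unitary permutation representation $\{R_\sigma\}_{\sigma\in S_3}$ on $L^{2,e}(\T^3)$, $(R_\sigma f)(p_1,p_2,p_3):=f(p_{\sigma^{-1}(1)},p_{\sigma^{-1}(2)},p_{\sigma^{-1}(3)})$. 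Since $A^e$ is moreover real and symmetric in $p\leftrightarrow q$, the operator $\cB^e(z,\lambda)$ is self-adjoint. I would then conclude: $L^{2,e,s_2}(\T^3)$, being the range of the symmetrizer $\tfrac16\sum_{\sigma\in S_3}R_\sigma$ (which commutes with $\cB^e$), is invariant; $L^{2,e,a_2}(\T^3)$, being the $(-1)$-eigenspace of the involution $R_{(23)}$ restricted to $L^{2,e}(\T^3)$, is invariant; and $L^{2,e,\perp}(\T^3)$ equals the orthogonal complement of $L^{2,e,s_2}(\T^3)$ inside the $(+1)$-eigenspace $\ker(R_{(23)}-I)\cap L^{2,e}(\T^3)$ (because $L^{2,e,s_2}(\T^3)\subset\ker(R_{(23)}-I)$ and by the definition of $L^{2,e,\perp}$), hence is invariant as the orthogonal complement of one $\cB^e$-invariant subspace inside another, using self-adjointness. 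Mutual orthogonality of the three summands is clear, since fully symmetric functions are orthogonal to $(23)$-antisymmetric ones.

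I do not expect a serious obstacle: the sign statement is a one-line modification of Proposition~\ref{prop:property_even_odds}, and the decomposition is elementary representation theory of $S_3$. The only point deserving a little care is that $L^{2,e,\perp}(\T^3)$ is not an $S_3$-isotypic component, so its $\cB^e$-invariance cannot be read off directly from the symmetry; instead it has to be obtained as the orthogonal complement of the invariant subspace $L^{2,e,s_2}(\T^3)$ inside the invariant subspace $\ker(R_{(23)}-I)\cap L^{2,e}(\T^3)$, where self-adjointness of $\cB^e(z,\lambda)$ does the rest.
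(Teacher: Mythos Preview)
Your proposal is correct and follows essentially the same approach as the paper. The sign assertion is handled identically (rerun the expansion from Proposition~\ref{prop:property_even_odds} with the flipped overall sign coming from $\Delta_{\gamma,0}<0$), and the invariance of the three subspaces is obtained in both cases from the diagonal $S_3$-symmetry of the kernel $A^e$ together with self-adjointness of $\cB^e$; your representation-theoretic phrasing (symmetrizer, $(-1)$-eigenspace of $R_{(23)}$, complement inside $\ker(R_{(23)}-I)$) is just a slightly more formal rewording of the paper's direct kernel-symmetry argument, and your detour for $L^{2,e,\perp}$ could be shortened to the paper's one-liner that the orthogonal complement of an invariant subspace under a self-adjoint operator is invariant.
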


\begin{proof}
As in the proof of Proposition \ref{prop:property_even_odds}, we represent $\cB^e$ and $\cB^o$ as a convergent sum
\begin{equation*}%\label{eztd6xbc}
\cB^e:= 
\sum_{m\ge0} \sum_{i_1,\ldots,i_{2m+1}\ge0} \cB_{2m,i_1,\ldots,i_{2m+1}}\quad\text{and}\quad 
\cB^o:=-
 \sum_{m\ge0} \sum_{i_1,\ldots,i_{2m+2}\ge0} \cB_{2m+1,i_1,\ldots,i_{2m+2}}
\end{equation*}
of nonnegative integral operators. This expansion implies $\cB^e\ge0$ and $\cB^o\le0.$ 

Consider the kernel $A^e(p,q)$ of $\cB^e,$ where for shortness, we are omitting dependence on $z$ and $\lambda.$ As $(p,q)\mapsto \cE_{\gamma,0}(p,q)$ is symmetric with respect to permutations of $(p_i,q_i)$ and even in each $(p_i,q_i),$
the function $p\mapsto \Delta_{\gamma,0}(p)$ is symmetric with respect to coordinate permutations and even in each coordinate. So, recalling also \eqref{evenlar0}, we conclude that $(p,q)\mapsto A^e(p,q)$ is also symmetric with respect to permutations of $(p_i,q_i)$ and even in each $(p_i,q_i)$. Therefore, $\cB^e$ maps each of the spaces $L^{2,e,s_2}(\T^3)$ and $L^{2,e,a_2}(\T^3)$ to itselfs. Finally, since $ \cB^e $ is self-adjoint, $\cB^e:L^{2,e,\perp}(\T^3) \to L^{2,e,\perp}(\T^3).$
\end{proof}

Note that $L^{2,o}(\T^3), L^{2,e,a_2}(\T^3),L^{2,e,\perp} (\T^3) \subset \sH_\Delta,$ where $\sH_\Delta:=\sH_{\gamma,\Delta}(0,z,\lambda)$ is given by \eqref{def:H_delta}. However, eigenfunctions of $\cB_{\gamma,0}(z,\lambda)$ in $L^{2,e,s_2}(\T^3)$ are not necessarily belong to $\sH_\Delta.$

\subsection{Discrete spectrum of $H_{\gamma,\lambda}(0)$ below the essential spectrum}

In this section we study discrete spectrum of $H_{\gamma,\lambda}(0)$ provided $\lambda$ is large and prove Theorem \ref{teo:eiegn_H_below}. In view of Theorem \ref{teo:bsh_principle}, $z$ is a discrete eigenvalue of $H_{\gamma,\lambda}(0)$ if and only if $1$ is an eigenvalue of $\cB_{\gamma,0}(z,\lambda).$ Therefore, using the decompositions of invariant subspaces, discussed earlier, we can study spectral properties of $\cB_{\gamma,0}(z,\lambda)$ for large $\lambda.$ 

We postpone the proof after some discussions. For  $\lambda>0$ and $z\le\tau_{\gamma,\min}(0,\lambda)<0$ consider the even and odd parts $\cB^e$ and $\cB^o$ of $\cB_{\gamma,0}$ given by \eqref{even_odd_parts01e}. By Proposition \ref{prop:property_even_odds} $\cB^e\le0$ and $\cB^o\ge0.$ Since we are interested in positive eigenvalues of $\cB_{\gamma,0},$  we study the behaviour of $\cB^o(z,\lambda)$ for large $\lambda$ (and hence,  by \eqref{two_part_ess_K0}, for small and very negative $z$). 

It turns out that for large $\lambda$ the operator $\cB^o$ is represented as a finite rank operator (preserving the invariant subspaces $\sH_\alpha$) plus a residual operator of small norm.
Indeed, in the notation of the proof of Proposition \ref{prop:property_even_odds}, using \eqref{D9zshed} we find 
$$
E^o = \frac{1}{E_c-z}\sum_{m\ge0} \Big(\frac{E_s}{E_c-z}\Big)^{2m+1} = \sum_{m\ge0} E_s^{2m+1} \Big(\frac{1}{\hat\gamma-z}\sum_{n\ge0}\Big(\frac{\hat E_c}{\hat\gamma-z}\Big)^{n}\Big)^{2m+2} = E^{o,p} + E^{o,r},
$$
where 
\begin{equation*}%\label{def:E_op_princ}
E^{o,p}:=\frac{E_s}{(\hat\gamma - z)^2},
\end{equation*}
which is basically the summand of $E^o$ with $m=n=0,$ and $E^{o,r}=E^o-E^{o,p}$ are the principal and residual parts of $E^o.$ Let us find some $L^\infty$ estimate for the residual. Since $|\hat E_c| \le \hat\gamma,$ 
\begin{equation}\label{shdte637f}
\sum_{n\ge0} \Big(\frac{\hat E_c}{\hat\gamma-z}\Big)^{n}= \frac{\hat\gamma - z}{\hat\gamma-\hat E_c-z} =  \frac{\hat\gamma - z}{E_c-z}.
\end{equation}
Moreover, as $|E_s| \le E_c,$ 
\begin{align*}
\Big|\sum_{m\ge1} E_s^{2m+1} \Big(\frac{1}{\hat\gamma-z}\sum_{n\ge0}\Big(\frac{\hat E_c}{\hat\gamma-z}\Big)^{n}\Big)^{2m+2}\Big| = 
\frac{|E_s|^3}{(E_c-z)^2} \,\frac{1}{(E_c-z)^2-E_s^2}.
\end{align*}
Now using $E_c\pm E_s\ge0$ and $E_c\ge0,$ as well as $|E_s| \le 3\gamma,$ we obtain 
\begin{equation*}%\label{zur657t}
\Big|\sum_{m\ge1} E_s^{2m+1}\Big(\frac{1}{\hat\gamma-z}\sum_{n\ge0}\Big(\frac{\hat E_c}{\hat\gamma-z}\Big)^{n}\Big)^{2m+2}\Big| \le \frac{27\gamma^3}{(-z)^4}. 
\end{equation*}
Similarly, using $E_c=\hat\gamma-\hat E_c\ge0,$ we have 
\begin{multline*}
\Big|\Big(\frac{1}{\hat\gamma-z}\sum_{n\ge0}\Big(\frac{\hat E_c}{\hat\gamma-z}\Big)^{n}\Big)^{2}- \frac{1}{(\hat\gamma-z)^2} \Big|
= \Big|\frac{1}{(E_c-z)^2}-\frac{1}{(\hat\gamma-z)^2}\Big| 
\\
=\frac{|2\hat E(\hat\gamma-z) + {\hat E_c}^2|}{(E_c-z)^2(\hat\gamma-z)^2}
\le \frac{2\hat\gamma}{(-z)^2(\hat\gamma-z)} + \frac{{\hat\gamma^2}}{(-z)^2(\hat\gamma-z)^2},
\end{multline*}
and therefore, 
\begin{equation}\label{good_est_Eores21}
|E^{o,r}(p,q,z )| \le  \frac{6\gamma \hat\gamma}{(-z)^2(\hat\gamma-z)} + \frac{3\gamma{\hat\gamma}^2}{(-z)^2(\hat\gamma-z)^2} +  \frac{27\gamma^3}{(-z)^4}.
\end{equation}
Now take $z<\tau_{\gamma,\min}(0,\lambda)$ and let $\cB^{o,p}(z,\lambda) $ and $\cB^{o,r}(z,\lambda)$ be the  integral operators with the kernels 
\begin{equation}\label{def:A_gamma_op}
A_\gamma^{o,p}(p,q,z,\lambda) :=\frac{\lambda}{(2\pi)^3}\,\frac{E^{o,p}(p,q,z )}{\sqrt{\Delta_\gamma(p,z,\lambda)} \sqrt{\Delta_\gamma(q,z,\lambda)}}
\end{equation}
and
$$
A_\gamma^{o,r}(p,q,z,\lambda) :=\frac{\lambda}{(2\pi)^3}\,\frac{E^{o,r}(p,q,z )}{\sqrt{\Delta_\gamma(p,z,\lambda)} \sqrt{\Delta_\gamma(q,z,\lambda)}},
$$  
respectively. As in Proposition \ref{prop:property_even_odds}, we can readily check that both operators are nonnegative. 

Let us show that the residual part $\cB^{o,r}$ of $\cB^{o}$ has small norm for large $\lambda.$ 

\begin{lemma}\label{lem:residual_small}
Let $\gamma>0$ and  
\begin{equation}\label{large_lambda0123}
\lambda>3(3+\gamma).
\end{equation}
Then there exists $C_1({\gamma})>3(3+\gamma)$ such that  % 
\begin{align}\label{andalib_zoglar}
\|\cB^{o,r}(z,\lambda)\| \le  \frac{C_1(\gamma)}{\lambda}
\end{align}
for any  $z<\tau_{\gamma,\min}(0,\lambda).$
\end{lemma}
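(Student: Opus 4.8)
The plan is to control $\cB^{o,r}(z,\lambda)$ by its Hilbert--Schmidt norm. For $z<\tau_{\gamma,\min}(0,\lambda)$ one has $\Delta_\gamma(p,z,\lambda)=\Delta_{\gamma,0}(p,z,\lambda)>0$, and since $E^{o,r}(\cdot,\cdot,z)$ is a bounded (symmetric) function on $\T^3\times\T^3$, the kernel $A_\gamma^{o,r}$ is square--integrable and
\[
\|\cB^{o,r}(z,\lambda)\|\le \|A_\gamma^{o,r}(\cdot,\cdot,z,\lambda)\|_{L^2(\T^3\times\T^3)}\le \frac{\lambda}{(2\pi)^3}\,\|E^{o,r}(\cdot,\cdot,z)\|_{L^\infty}\int_{\T^3}\frac{dp}{\Delta_{\gamma,0}(p,z,\lambda)}.
\]
Hence it suffices to bound $\|E^{o,r}(\cdot,\cdot,z)\|_{L^\infty}$ by $C(\gamma)(-z)^{-3}$ and $\int_{\T^3}\Delta_{\gamma,0}(p,z,\lambda)^{-1}dp$ by $C(\gamma)\lambda^{-1}(\lambda+1)((-z)+1)$, and then to use that $-z$ is comparable to $\lambda$; here and below $C(\gamma)$ denotes a constant depending only on $\gamma$ that may change from line to line.

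The first bound is essentially \eqref{good_est_Eores21}: since $z<0$ we have $\hat\gamma-z>-z$, so each of the three terms on the right of \eqref{good_est_Eores21} is at most $C(\gamma)(-z)^{-3}$ provided $-z$ is bounded below by an absolute constant; and by \eqref{asymp_zKp5e32} (with $K=0$, $q=0$) and \eqref{two_part_ess_K0} we have $\tau_{\gamma,\min}(0,\lambda)=z_{\gamma,\lambda}(0,0)\le -\lambda+3(1+\gamma)$, so $-z>\lambda-3(1+\gamma)>6$ whenever $\lambda>3(3+\gamma)$. For the second bound, use $\Delta_{\gamma,0}(p,z_{\gamma,\lambda}(0,p),\lambda)=0$ and subtract inside \eqref{fredholm_determ}: with $z(p):=z_{\gamma,\lambda}(0,p)$,
\[
\Delta_{\gamma,0}(p,z,\lambda)=\frac{\lambda}{(2\pi)^3}\int_{\T^3}\frac{(z(p)-z)\,ds}{(\cE_{\gamma,0}(p,s)-z(p))(\cE_{\gamma,0}(p,s)-z)}.
\]
By \eqref{two_part_ess_K0} and \eqref{baho_z_gl_0pi}, for $z<\tau_{\gamma,\min}(0,\lambda)$ we have $z(p)-z\ge z_{\gamma,\lambda}(0,p)-z_{\gamma,\lambda}(0,0)\ge\epsilon(p)$ and $z(p)\ge -\lambda$, while $0\le\cE_{\gamma,0}(p,s)\le 6(2+\gamma)$ (each $\epsilon$ is at most $6$), whence $\cE_{\gamma,0}(p,s)-z(p)\le 6(2+\gamma)+\lambda$ and $\cE_{\gamma,0}(p,s)-z\le 6(2+\gamma)+(-z)$. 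Consequently
\[
\Delta_{\gamma,0}(p,z,\lambda)\ge\frac{\lambda\,\epsilon(p)}{(6(2+\gamma)+\lambda)(6(2+\gamma)+(-z))},\qquad \int_{\T^3}\frac{dp}{\Delta_{\gamma,0}(p,z,\lambda)}\le\frac{(6(2+\gamma)+\lambda)(6(2+\gamma)+(-z))}{\lambda}\int_{\T^3}\frac{dp}{\epsilon(p)},
\]
and $\int_{\T^3}\epsilon(p)^{-1}dp<\infty$ since $\epsilon$ vanishes quadratically only at $p=0$ and $|p|^{-2}$ is integrable in $\R^3$.

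Putting the displays together, $\|\cB^{o,r}(z,\lambda)\|\le C(\gamma)\,(-z)^{-3}(\lambda+6(2+\gamma))((-z)+6(2+\gamma))$. As a function of $t:=-z\ge\lambda-3(1+\gamma)$ the right--hand side is decreasing, so it is largest at $t=\lambda-3(1+\gamma)$; evaluating there and using $\lambda/(\lambda-3(1+\gamma))\le(3+\gamma)/2$ for $\lambda>3(3+\gamma)$, one obtains $\|\cB^{o,r}(z,\lambda)\|\le C_1(\gamma)/\lambda$ with $C_1(\gamma)$ depending only on $\gamma$ (enlarging $C_1(\gamma)$ to $\max\{C_1(\gamma),\,3(3+\gamma)+1\}$ if necessary), which is \eqref{andalib_zoglar}.

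The only genuinely delicate point is the lower bound on $\Delta_{\gamma,0}$: as $z\uparrow\tau_{\gamma,\min}(0,\lambda)$ the determinant degenerates at $p=0$, and the \emph{uniformity} of the bound on $\int\Delta_{\gamma,0}^{-1}$ rests precisely on the integrability of $\epsilon(p)^{-1}$ in dimension three together with the clean inequality $z(p)-z\ge\epsilon(p)$, which is special to $K=0$. Tracking the exact $\lambda$- and $z$-dependence through this step is what turns the estimate into an $O(1/\lambda)$ bound rather than a mere $o(1)$ one; everything else is bookkeeping with the geometric series already assembled before the lemma.
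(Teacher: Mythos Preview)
Your proof is correct and follows essentially the same route as the paper: bound $\|\cB^{o,r}\|$ by $\frac{\lambda}{(2\pi)^3}\|E^{o,r}\|_\infty\int_{\T^3}\Delta_{\gamma,0}(p,z,\lambda)^{-1}\,dp$, then use \eqref{good_est_Eores21} for the first factor and the subtraction trick $\Delta_{\gamma,0}(p,z,\lambda)=\Delta_{\gamma,0}(p,z,\lambda)-\Delta_{\gamma,0}(p,z_{\gamma,\lambda}(0,p),\lambda)$ together with $z_{\gamma,\lambda}(0,p)-z\ge\epsilon(p)$ for the second, and finally $-z>\lambda-3(1+\gamma)\ge\tfrac{2\lambda}{3+\gamma}$. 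The only cosmetic differences are that the paper reaches the same intermediate bound via the quadratic form (using $\cB^{o,r}$ self-adjoint) rather than the Hilbert--Schmidt norm, and uses $\hat\gamma$ instead of $6(2+\gamma)$ as the uniform upper bound for $\cE_{\gamma,0}$; neither affects the argument.
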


In view of \eqref{large_lambda0123} and  \eqref{nice_estoa0s}, $\sigma_{\ess}(H_{\gamma,\lambda}(0))$ consists of two disjoint segments $[\tau_{\gamma,\min}(0,\lambda),\tau_{\gamma,\max}(0,\lambda)] $ and $[\cE_{\gamma,\min}(0), \cE_{\gamma,\max}(0)].$ One possible choice of $C_1(\gamma)$ will be given in the proof.

\begin{proof}
Since $\cB^{o,r}$ is self-adjoint, by \eqref{good_est_Eores21}
\begin{align}\label{resid_estos12}
\|\cB^{o,r}(z,\lambda)\| = & \sup_{\|f\|_{L^2}=1} |(\cB^{o,r}(z,\lambda)f,f)_{L^2}| 
\le \frac{\lambda}{(2\pi)^3}\,\|E^{o,r}(\cdot,\cdot,z )\|_\infty  \sup_{\|f\|_{L^2}=1} \Big|\Big(|f|,\frac{1}{\sqrt{\Delta_\gamma(\cdot,z,\lambda)}}\Big)\Big|^2 \nonumber \\
= & \frac{\lambda}{(2\pi)^3}\,\|E^{o,r}(\cdot,\cdot,z )\|_\infty \int_{\T^3}\frac{dp}{\Delta_\gamma(p,z,\lambda)}.
\end{align}
Since we have already the $L^\infty$-estimate of $E^{o,r},$ we estimate $\Delta_\gamma(p,z,\lambda)$ from below. By assumption on $\lambda$ and Lemma \ref{lem:prop_z_gamlam}  $z_{\gamma,\lambda}(p):=\inf\sigma(h_{\gamma,\lambda}(0,p))=z_{\gamma,\lambda}(0,p)$ is the unique discrete eigenvalue of the effective one-particle operator $h_{\gamma,\lambda}(0,p).$ Thus, 
$\Delta_{\gamma}(p,z_{\gamma,\lambda}(p),\lambda)=0$ and hence,  
\begin{align} \label{gshezr64f}
\Delta_{\gamma}(p,z,\lambda) = & 
\Delta_{\gamma}(p,z,\lambda) - \Delta_{\gamma}(p,z_{\gamma,\lambda}(p),\lambda) \nonumber \\ 
= & \frac{\lambda}{(2\pi)^3}\int_{\T^3} \frac{(z_{\gamma,\lambda}(p) - z )\,dq}{(\cE_{\gamma,0}(p,q)-z_{\gamma,\lambda}(p)) (\cE_{\gamma,0}(p,q)-z )}.
\end{align}
In view of \eqref{baho_z_gl_0pi} 
$$
z_{\gamma,\lambda}(p) - z =
z_{\gamma,\lambda}(p) - z_{\gamma,\lambda}(0) + z_{\gamma,\lambda}(0) -  z \ge \epsilon(p)
$$
and %by \eqref{agsterz56} 
\begin{equation}\label{estimos_Epq1a}
0 = \cE_{\gamma,\min}(0) \le \cE_{\gamma,0}(p,q) \le \cE_{\gamma,\max}(0) \le 2+2\gamma + \frac{1}{2\gamma} <\hat\gamma,\quad p,q\in\T^3.
\end{equation}
Moreover, by \eqref{zgamlam_est_0pie} $-z_{\gamma,\lambda}(p) \le -z_{\gamma,\lambda}(0)\le \lambda$ and thus, 
$$
\Delta_{\gamma}(p,z,\lambda) \ge \frac{\lambda}{(2\pi)^3}\int_{\T^3} \frac{\epsilon(p)\,dq}{(\hat\gamma + \lambda) (\hat\gamma-z )} = \frac{\lambda\epsilon(p)}{(\hat\gamma + \lambda) (\hat\gamma-z )}.
$$
Inserting this estimate of $\Delta_\gamma$ together with the $L^\infty$-estimate \eqref{good_est_Eores21} of $E^{o,r}$ in \eqref{resid_estos12}
we obtain
\begin{equation}\label{ahqd2sze6}
\|\cB^{o,r}(z,\lambda)\| \le \frac{\hat\gamma+\lambda}{(-z)^2}\Big(6\gamma\hat\gamma + \frac{3\gamma\hat\gamma}{\hat\gamma-z} + \frac{27\gamma^3(\hat\gamma-z)}{(-z)^2}\Big)\frac{1}{(2\pi)^3}\int_{\T^3}\frac{dq}{\epsilon(q)}.
\end{equation}
Finally, we observe that $z<z_{\gamma,\lambda}(0) < -\lambda +3(1+\gamma)$ (see \eqref{nice_estoa0s} for the latter). Thus, by assumption \eqref{large_lambda0123}, $-z>\lambda - 3(1+\gamma) \ge \frac{2\lambda}{3+\gamma}.$
Inserting this in \eqref{ahqd2sze6} we get \eqref{andalib_zoglar}, for instance, with 
$$
C_1(\gamma):=\Big(\frac{\hat\gamma(3+\gamma)}{12} + \frac{(3+\gamma)^2}{4}\Big)\Big(6\gamma\hat\gamma + \frac{3\gamma\hat\gamma}{\hat\gamma+6}+\frac{27\gamma^3\hat\gamma}{36}+\frac{27\gamma^3}{6}\Big)\frac{1}{(2\pi)^3}\int_{\T^3}\frac{dp}{\epsilon(p)}+3(3+\gamma).
$$
\end{proof}

Now consider the principal part $\cB^{o,p}$ of $\cB^o.$ By the explicit expression \eqref{def:A_gamma_op} of the kernel $A_\gamma^{o,p}$ of $\cB^{o,p},$ and the evenness and symmetry of $p_i\mapsto \Delta_\gamma(p,z,\lambda)$ it follows that each $\sH_\alpha,$ defined in \eqref{odd_subspaces0192}, is an invariant subspace of $\cB^{o,p}(z,\lambda).$ Moreover, $\cB^{o,p}(z,\lambda)\big|_{\sH_{123}}=0,$ and $\cB^{o,p}(z,\lambda)\big|_{\sH_{\alpha}}$ for $\alpha=1,2,3$ is a rank-one integral operator with the kernel
$$
A_\gamma^{o,p,\alpha}(p,q,z,\lambda) = \frac{\lambda}{(2\pi)^3(\hat\gamma-z)^2} \frac{\gamma\sin p_\alpha\sin q_\alpha}{\sqrt{\Delta_\gamma(p,z,\lambda)}\sqrt{\Delta_\gamma(q,z,\lambda)}}.
$$
It has a unique positive eigenvalue 
\begin{equation}\label{hetrz46f}
e_\gamma^{o,p,\alpha}(z,\lambda) = \frac{\lambda\gamma}{(2\pi)^3(\hat\gamma-z)^2} \int_{\T^3 } \frac{\sin^2p_\alpha\,dp}{\Delta_\gamma(p,z,\lambda)}.
\end{equation}
Since $\Delta_\gamma(\cdot,z,\lambda)$ is symmetric, $e_\gamma^{o,p,\alpha}$ is independent of $\alpha,$ i.e. $e_\gamma^{o,p}:=e_\gamma^{o,p,\alpha}$ is an eigenvalue of $\cB^{o,p}(z,\lambda)$ of multiplicity three. Clearly, $z\mapsto e_\gamma^{o,p}(z,\lambda)$ is strictly increasing in $(-\infty,z_{\gamma,\lambda}(0)],$ and 
\begin{equation}\label{cheksiz_e_baho123x}
\lim\limits_{z\to-\infty} e_\gamma^{o,p}(z,\lambda) = 0.
\end{equation}

\begin{lemma}[An estimate for $e_\gamma^{o,p}(z_{\gamma,\lambda}(0),\lambda)$]\label{lem:est_for_e_alpha192}
Assume that $\gamma,\lambda>0$ satisfy \eqref{large_lambda0123}. Then   there exists $C_2(\gamma)>0$ such that 
\begin{equation}\label{e_upper_baho12}
\frac{\gamma}{\gamma_1^0} - \frac{C_2(\gamma)}{\lambda} <e_{\gamma}^{o,p}(z_{\gamma,\lambda}(0),\lambda) <\frac{\gamma}{\gamma_1^0},
\end{equation}
where $\gamma_1^0:=\gamma_1(0)>0$ is given in \eqref{def:gamma_1}.
\end{lemma}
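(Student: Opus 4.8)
The plan is to start from the closed expression \eqref{hetrz46f} for the triple eigenvalue $e_\gamma^{o,p}$ of the principal part $\cB^{o,p}$, evaluate it at $z_0:=z_{\gamma,\lambda}(0,0)=\tau_{\gamma,\min}(0,\lambda)$ (the last identity by \eqref{two_part_ess_K0}), and to sandwich the Fredholm determinant $\Delta_\gamma(p,z_0,\lambda)$ between two explicit quantities. Writing $z(p):=z_{\gamma,\lambda}(0,p)$, the identity $\Delta_\gamma(p,z(p),\lambda)=0$ gives, exactly as in \eqref{gshezr64f},
\[
\Delta_\gamma(p,z_0,\lambda)=\frac{\lambda}{(2\pi)^3}\int_{\T^3}\frac{\bigl(z(p)-z_0\bigr)\,dq}{\bigl(\cE_{\gamma,0}(p,q)-z(p)\bigr)\bigl(\cE_{\gamma,0}(p,q)-z_0\bigr)}.
\]
The inputs I would collect first are: $\epsilon(p)\le z(p)-z_0\le\epsilon(p)+9(1+\gamma)^2/\lambda$ (lower bound from \eqref{baho_z_gl_0pi}, upper bound from \eqref{nice_estoa0s} together with $z(p)=\hat z_{\gamma,\lambda}(p)+\epsilon(p)$, cf. \eqref{w26tza1}); $\lambda-3(1+\gamma)<-z_0\le\lambda$ (from \eqref{nice_estoa0s} and \eqref{baho_z_gl_0pi}); $0\le\cE_{\gamma,0}(p,q)\le\hat\gamma$ (from \eqref{estimos_Epq1a}); and $-z(p)\ge\lambda-\hat\gamma$ (from the min-max bound \eqref{minmax_z_gam_lam}).

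For the upper bound I would use $\cE_{\gamma,0}(p,q)\le\hat\gamma$ and $z(p)\ge z_0$ to see that both denominator factors above are $\le\hat\gamma-z_0$, hence $\Delta_\gamma(p,z_0,\lambda)\ge\lambda\bigl(z(p)-z_0\bigr)(\hat\gamma-z_0)^{-2}$. Substituting into \eqref{hetrz46f} the factor $(\hat\gamma-z_0)^2$ cancels, leaving $e_\gamma^{o,p}(z_0,\lambda)\le\frac{\gamma}{(2\pi)^3}\int_{\T^3}\sin^2 p_\alpha\,(z(p)-z_0)^{-1}\,dp$; then $z(p)-z_0\ge\epsilon(p)$, with strict inequality off the null set $\{p=0\}$ (since $q_i\mapsto\hat z_{\gamma,\lambda}(q)$ is strictly increasing on $[0,\pi]$ by Lemma~\ref{lem:prop_z_gamlam}), gives $e_\gamma^{o,p}(z_0,\lambda)<\frac{\gamma}{(2\pi)^3}\int_{\T^3}\sin^2 p_\alpha\,\epsilon(p)^{-1}\,dp=\gamma/\gamma_1^0$, which is the right-hand inequality of \eqref{e_upper_baho12} in view of \eqref{def:gamma_1}.

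For the lower bound I would instead use $\cE_{\gamma,0}(p,q)\ge0$ to bound the denominator below by $(-z(p))(-z_0)$, so $\Delta_\gamma(p,z_0,\lambda)\le\lambda\bigl(z(p)-z_0\bigr)\bigl[(-z(p))(-z_0)\bigr]^{-1}$; inserting this into \eqref{hetrz46f} and then bounding $z(p)-z_0\le\epsilon(p)+9(1+\gamma)^2/\lambda$, $-z(p)\ge\lambda-\hat\gamma$, $-z_0\ge\lambda-3(1+\gamma)$ and $\hat\gamma-z_0\le\hat\gamma+\lambda$ yields
\[
e_\gamma^{o,p}(z_0,\lambda)\ge\gamma\,\frac{\bigl(\lambda-3(1+\gamma)\bigr)\bigl(\lambda-\hat\gamma\bigr)}{(\hat\gamma+\lambda)^2}\cdot\frac{1}{(2\pi)^3}\int_{\T^3}\frac{\sin^2 p_\alpha\,dp}{\epsilon(p)+9(1+\gamma)^2/\lambda}.
\]
I would finish with two elementary estimates: $\frac{(\lambda-a)(\lambda-b)}{(\lambda+c)^2}\ge1-\frac{a+b+2c}{\lambda}$ for $a,b,c>0$; and, since $\int_{\T^3}\sin^2 p_\alpha\,\epsilon(p)^{-2}\,dp<\infty$ (the integrand is $O(p_\alpha^2|p|^{-4})$ near $0$, hence integrable in $\R^3$), the strict bound $(\epsilon+\eta)^{-1}>\epsilon^{-1}-\eta\epsilon^{-2}$ with $\eta=9(1+\gamma)^2/\lambda$. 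Combining these gives $e_\gamma^{o,p}(z_0,\lambda)>\gamma/\gamma_1^0-C_2(\gamma)/\lambda$ with an explicit $C_2(\gamma)>0$, e.g. $C_2(\gamma)=\frac{\gamma}{\gamma_1^0}\bigl(3(1+\gamma)+3\hat\gamma\bigr)+9\gamma(1+\gamma)^2\,\frac{1}{(2\pi)^3}\int_{\T^3}\sin^2 p_\alpha\,\epsilon(p)^{-2}\,dp$; this is obtained first for $\lambda>\hat\gamma$ (so the displayed factors are positive and of order $\lambda^{\pm1}$) and then for all $\lambda$ satisfying \eqref{large_lambda0123} after enlarging $C_2(\gamma)$, the remaining bounded $\lambda$-interval being trivial because $e_\gamma^{o,p}(z_0,\lambda)>0$ always.

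The main obstacle is the lower bound: one must track the near-cancellation of the three factors $(-z_0)$, $(-z(p))$ and $(\hat\gamma-z_0)^{-2}$ — each comparable to $\lambda^{\pm1}$ up to $O(1)$ corrections — down to precision $O(1/\lambda)$, while simultaneously controlling $\int_{\T^3}\sin^2 p_\alpha\,(\epsilon(p)+\eta)^{-1}\,dp$ in the region near $p=0$ where numerator and denominator both vanish; the integrability of $\sin^2 p_\alpha/\epsilon(p)^2$ at the origin is precisely what forces the error term to be of order $1/\lambda$ rather than larger.
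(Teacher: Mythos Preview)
Your proposal is correct and follows essentially the same route as the paper: both start from the integral representation \eqref{gshezr64f} of $\Delta_\gamma(p,z_0,\lambda)$, sandwich it using $0\le\cE_{\gamma,0}\le\hat\gamma$ together with the two-sided bounds on $z(p)-z_0$ coming from \eqref{baho_z_gl_0pi} and \eqref{nice_estoa0s}, and conclude via the integrability of $\sin^2 p_\alpha/\epsilon(p)^2$. The only cosmetic difference is that the paper uses $-z(p)\ge\lambda-3(3+\gamma)$ (from \eqref{nice_estoa0s}) rather than your $-z(p)\ge\lambda-\hat\gamma$ from the min--max bound, so it avoids the extra ``enlarge $C_2(\gamma)$ on a compact $\lambda$-interval'' step.
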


An explicit choice of $C_2(\gamma)$ is given in the proof.

\begin{proof}
By \eqref{gshezr64f} 
$$
\Delta_\gamma(p,z_{\gamma,\lambda}(0),\lambda) = \frac{\lambda}{(2\pi)^3} \int_{\T^3} \frac{(z_{\gamma,\lambda}(p) - z_{\gamma,\lambda}(0))dq}{(\cE_{\gamma,0}(p,q)-z_{\gamma,\lambda}(0))(\cE_{\gamma,0}(p,q)-z_{\gamma,\lambda}(p))}.
$$
Using \eqref{baho_z_gl_0pi}, \eqref{nice_estoa0s} and  \eqref{estimos_Epq1a} we can estimate $\Delta_\gamma(p,z_{\gamma,\lambda}(0),\lambda)$ as 
$$
\frac{\lambda\epsilon(p)}{(\hat\gamma - z_{\gamma,\lambda}(0))(\hat\gamma - z_{\gamma,\lambda}(p))} 
\le 
\Delta_\gamma(p,z_{\gamma,\lambda}(0),\lambda)
\le 
\frac{\lambda(\epsilon(p) + \frac{9(1+\gamma)^2}{\lambda})}{(- z_{\gamma,\lambda}(0))(- z_{\gamma,\lambda}(p))}.
$$
Thus,
\begin{multline*}
I_1:=\frac{\gamma\,(- z_{\gamma,\lambda}(0))}{(2\pi)^3(\hat\gamma - z_{\gamma,\lambda}(0))^2} 
\int_{\T^3}\frac{(- z_{\gamma,\lambda}(p))\sin^2p_1dp}{\epsilon(p) + \frac{9(1+\gamma)^2}{\lambda}} \\ 
< 
e_\gamma^{o,p}(z_{\gamma,\lambda}(0),\lambda) 
<
\frac{\gamma\,(\hat\gamma - z_{\gamma,\lambda}(0))}{(2\pi)^3(\hat\gamma - z_{\gamma,\lambda}(0))^2} 
\int_{\T^3}\frac{(\hat\gamma - z_{\gamma,\lambda}(p))\,\sin^2p_1dp}{\epsilon(p)}:=I_2.
\end{multline*}
By \eqref{zgamlam_est_0pie} 
$$
I_2 \le \frac{\gamma}{(2\pi)^3} 
\int_{\T^3}\frac{\sin^2p_1dp}{\epsilon(p)} = \frac{\gamma}{\gamma_1^0}.
$$
On the other hand, as $\lambda>3(3+\gamma),$ by \eqref{zgamlam_est_0pie} and  \eqref{nice_estoa0s} $ \lambda\ge -z_{\gamma,\lambda}(p) \ge \lambda-3(1+\gamma)-\epsilon(p) \ge \lambda-3(3+\gamma)$ for all $p\in\T^3.$ Therefore, 
$$
I_1\ge \frac{\gamma\,(\lambda - 3(3+\gamma))^2}{(2\pi)^3\,(\lambda + \hat\gamma)^2}
\int_{\T^3}\frac{\sin^2p_1dp}{\epsilon(p) + \frac{9(1+\gamma)^2}{\lambda}}=:I_1'.
$$ 
Note that 
\begin{align*}
\frac{\gamma}{\gamma_1^0} - I_1'  = & \Big(1-\frac{(\lambda - 3(3+\gamma))^2}{(\lambda + \hat\gamma)^2}\Big)\frac{\gamma}{(2\pi)^3} \int_{\T^3}\frac{\sin^2p_1dp}{\epsilon(p) + \frac{9(1+\gamma)^2}{\lambda}} \\
& + \frac{(\lambda - 3(3+\gamma))^2}{(\lambda + \hat\gamma)^2}\frac{\gamma}{(2\pi)^3} \Big(\int_{\T^3}\frac{\sin^2p_1dp}{\epsilon(p)}  - \int_{\T^3}\frac{\sin^2p_1dp}{\epsilon(p) + \frac{9(1+\gamma)^2}{\lambda}} \Big)\\
< & \frac{\gamma}{\gamma_1^0}\,\frac{6\lambda(3+\gamma)+9(3+\gamma)^2}{(\lambda+\hat\gamma)^2} + \frac{9\gamma(1+\gamma)^2}{\lambda}\,\frac{1}{(2\pi)^3} \int_{\T^3} \frac{\sin^2p_1dp}{\epsilon(p)^2}.
\end{align*}
Since the last integral is convergent, setting, for instance,
$$
C_2(\gamma):=\frac{9(3+\gamma)\gamma}{\gamma_1^0} + \frac{9\gamma(1+\gamma)^2}{(2\pi)^3}  \int_{\T^3} \frac{\sin^2p_1dp}{\epsilon(p)^2},
$$
we get \eqref{e_upper_baho12}.
\end{proof}

Now we are ready to establish the main result of the section.

\begin{proof}[Proof of Theorem \ref{teo:eiegn_H_below}]
By Proposition \ref{prop:property_even_odds}
$$
\sigma_\disc(\cB_{\gamma,0}(z,\lambda))\cap [0,+\infty) = \bigcup_{\alpha\in\{1,2,3,123\}}\,\sigma_{\disc}(\cB^o(z,\lambda)\big|_{\sH_\alpha}).
$$
As $L^{2,o}(\T^3)\subset \sH_\Delta,$ we do not need to project $\cB^o$ onto the $\sH_\Delta.$
Moreover, the principal part of $\cB^{o,p}(z,\lambda)\big|_{\sH_{123}}$ is zero, and hence, if $\lambda>C_1(\gamma),$ where $C_1(\gamma)>0$ is given by Lemma \ref{lem:residual_small}, 
then 
$$
\|\cB^{o}(z,\lambda)\big|_{\sH_{123}}\| = \|\cB^{o,r}(z,\lambda)\big|_{\sH_{123}}\|\le \frac{C_1(\gamma)}{\lambda}<1.
$$
Thus, $1$ cannot be an eigenvalue of $\cB^{o}(z,\lambda)\big|_{\sH_{123}}.$ 

Now consider the restriction $\cB^{o}(z,\lambda)\big|_{\sH_{1}},$ which is unitarily equivalent $\cB^{o}(z,\lambda)\big|_{\sH_{\zeta}}$ for $\zeta=2,3,$ so that its every eigenvalue $e_\gamma^o(z,\lambda)$ of multiplicity $k\ge1$ is an eigenvalue of $\cB^o(z,\lambda)$ of multiplicity $3k.$ Moreover, since $\cB^o = \cB^{o,p}+\cB^{o,r}$ and the norm of $\cB^{o,r}$ is small, by a simple perturbation arguments, 
$$
e_\gamma^o(z,\lambda) = e_\gamma^{o,p}(z,\lambda) + e_\gamma^{o,r}(z,\lambda), 
$$
where $e_\gamma^{o,p}(z,\lambda)$ is given by \eqref{hetrz46f} and $|e_\gamma^{o,r}(z,\lambda)|\le C_1(\gamma)/\lambda.$ Since $z\mapsto e_\gamma^{o,p}(z,\lambda)$ strictly increasing and continuous, in view of \eqref{cheksiz_e_baho123x}, the equation $e_\gamma^{o,p}(z,\lambda) = 1$ admits a solution $z<z_{\gamma,\lambda}(0)$ if and only if  
$e_\gamma^{o,p}(z_{\gamma,\lambda}(0),\lambda)>1.$

Now, in view of  \eqref{e_upper_baho12}, if $\gamma<\gamma_1^0,$ then for any $\lambda> \lambda_1(\gamma):=\frac{C_1(\gamma)}{1-\gamma/\gamma_1^0} $ the equation $e_\gamma^o(z,\lambda)=1$ does not have a solution $z<\tau_{\gamma,\min}(0,\lambda).$ Thus, by Theorem \ref{teo:bsh_principle} $H_{\gamma,\lambda}(0)$ has no discrete spectrum below the essential spectrum.

On the other hand, if $\gamma>\gamma_1^0,$ then for any $\lambda>\lambda_2(\gamma):= \frac{C_2(\gamma)}{\gamma/\gamma_1^0-1}$ the equation $e_\gamma^o(z,\lambda)=1$ admits a unique solution $z<\tau_{\gamma,\min}(0,\lambda).$ Since $1$ is a triple eigenvalue of $\cB^o(z,\lambda),$ by Theorem \ref{teo:bsh_principle} $z$ is a triple eigenvalue of $H_{\gamma,\lambda}(0)$ below the essential spectrum. Note that $H_{\gamma,\lambda}(0)$ cannot have other eigenvalues below the essential spectrum, otherwise, again by Theorem \ref{teo:bsh_principle} $\cB_{\gamma,0}(\cdot,\lambda)$ would have an eigenvalue $1,$ which contradicts to the uniqueness of $e_{\gamma}^o(\cdot,\lambda).$

Finally, by the explicit relationship \eqref{f_defined_by_psi} between the eigenvalues of $H_{\gamma,\lambda}(K)$ and $\cB_{\gamma,K}(z,\lambda),$ as well as the oddness of eigenvectors of $\cB^o$ it follows that any eigenfunction corresponding to an eigenvalue $z< \tau_{\gamma,\min}(0,\lambda)$ is in fact an odd function in $\T^3\times\T^3.$
\end{proof}

\subsection{Discrete spectrum of $H_{\gamma,\lambda}(0)$ in the gap}

In this section also we assume $\lambda$ satisfies \eqref{large_lambda0123} so that $\tau_{\gamma,\max}(0,\lambda) = z_{\gamma,\lambda}(\vec\pi) < 0=\cE_{\gamma,\min}(0),$ i.e., there is a gap in the essential spectrum of $H_{\gamma,\lambda}(0).$ By Theorem \ref{teo:bsh_principle} we still have that a $z\in (\tau_{\gamma,\max}(0,\lambda),0)$ is an eigenvalue of $H_{\gamma,\lambda}(0)$ of multiplicity $m\ge1$ if and only if $1$ is an eigenvalue of $\cB_{\gamma,0}(z,\lambda)$ of multiplicity $m,$ with an eigenvector in $\sH_\Delta,$ where $\sH_\Delta$ is defined in \eqref{def:H_delta} with $K=0.$ 
If we denote by $\cB^e$ and $\cB^o$ the even and odd parts of $\cB,$ then by Proposition \ref{prop:z_in_gap}  $\cB^e\ge0$ and $\cB^o\le0.$ Therefore, we study the positive eigenvalues of the operator $\cP_\Delta \cB^e\cP_\Delta,$ where $\cP_\Delta$ is the orthogonal projection onto $\sH_\Delta.$

Note that now a largeness of $\lambda$ does not necessarily lead to the largeness of $|z|.$ 
Thus, we need to distinguish two different regimes: $|z|$ is large and $|z|$ is small;
unfortunately, although we expect the eigenvalues of $\cB^e(z,\lambda)$ to lie close to the segments of the essential spectrum for large $\lambda,$  we are currently unable to say much about the case of intermediate or relatively small $|z|$ with respect to $\lambda.$

It is worth to mention that for large $|z|,$ as in the proof of Theorem \ref{teo:eiegn_H_below}, the behaviour of $\cB^e$ is completely determined by a finite rank operator.
Indeed, in the notation of the proof of Proposition \ref{prop:property_even_odds}, by \eqref{D9zshed}
$$
E^e = \frac{1}{E_c-z}\sum_{m\ge0} \Big(\frac{E_s}{E_c-z}\Big)^{2m} = \sum_{m\ge0} E_s^{2m} \Big(\frac{1}{\hat\gamma-z}\sum_{n\ge0}\Big(\frac{\hat E_c}{\hat\gamma-z}\Big)^{n}\Big)^{2m+1} = E^{e,p} + E^{e,r},
$$
where 
\begin{equation*}%\label{def:E_ep_princ}
E^{e,p}:=\frac{\hat E_c}{(\hat\gamma - z)^2},
\end{equation*}
which is basically the summand of $E^e$ with $m=0$ and $n=1,$ and $E^{e,r}=E^e-E^{e,p}$ are the principal and residual parts of $E^e.$ The summands of $E^{e,r}$ does not contain the summand $m=n=0,$ and thus, 
$$
\Big|\sum_{m\ge1} E_s^{2m} \Big(\frac{1}{\hat\gamma-z}\sum_{n\ge0}\Big(\frac{\hat E_c}{\hat\gamma-z}\Big)^n\Big)^{2m+1}\Big| = \frac{E_s^2}{(E_c-z)(E_c-E_s-z)(E_c+E_s-z)} \le \frac{9\gamma^2}{(-z)^3}
$$
and 
$$
\Big|\frac{1}{\hat\gamma-z} \sum_{n\ge2} \Big(\frac{\hat E_c}{\hat\gamma-z}\Big)^n\Big| = \frac{{\hat E_c}^2}{(\hat\gamma-z)^2 (E_c-z)} \le \frac{{\hat\gamma}^2}{(\hat\gamma-z)^2 (-z)}.
$$
Hence, 
\begin{equation*}%\label{E_er_upper_baho}
\|E^{e,r}(\cdot,\cdot,z)\|_\infty \le  \frac{9\gamma^2}{(-z)^3}+ \frac{{\hat\gamma}^2}{(\hat\gamma-z)^2 (-z)}
\end{equation*}

For any $z\in (z_{\gamma,\lambda}(\vec\pi),0)$ let $\cB^{e,p}(z,\lambda) $ and $\cB^{e,r}(z,\lambda)$ be the  integral operators with the kernels 
\begin{equation*}%\label{def:A_gamma_ep}
A_\gamma^{e,p}(p,q,z,\lambda) :=\frac{\lambda}{(2\pi)^3}\,\frac{E^{e,p}(p,q,z )}{\sqrt{|\Delta_\gamma(p,z,\lambda|)} \sqrt{|\Delta_\gamma(q,z,\lambda)|}}
\end{equation*}
and
$$
A_\gamma^{e,r}(p,q,z,\lambda) :=\frac{\lambda}{(2\pi)^3}\,\frac{E^{e,r}(p,q,z )}{\sqrt{|\Delta_\gamma(p,z,\lambda)|} \sqrt{|\Delta_\gamma(q,z,\lambda)|}},
$$  
respectively. As in Proposition \ref{prop:property_even_odds}, we can readily check that both operators are nonnegative and $\cB_{\gamma,0}^e(z,\lambda) = \cB^{e,p}(z,\lambda) + \cB^{e,r}(z,\lambda).$ Moreover, since $L^{2,e}(\T^3)$ is not necessarily contained in $\sH_\Delta,$ we need to project both $\cB^{e,p}$ and $\cB^{e,r}$ onto $\sH_\Delta.$

We start with the following estimate on the residual part.

\begin{lemma}\label{lem:residual_B_e}
For any $\gamma,\lambda>0,$ satisfying \eqref{large_lambda0123} and $z\in (z_{\gamma,\lambda}(\vec\pi),0)$ one has  
\begin{equation}\label{small_part0}
\|\cP_\Delta \cB^{e,r}(z,\lambda)\cP_\Delta\| \le (\hat\gamma-z)(\hat\gamma + \lambda)\Big(\frac{9\gamma^2}{(-z)^3} + \frac{{\hat\gamma}^2}{(\hat\gamma-z)^2(-z)}\Big)\frac{1}{(2\pi)^3}\int_{\T^3}\frac{dp}{\epsilon(p)}.
\end{equation}
\end{lemma}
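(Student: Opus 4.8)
The plan is to repeat, with the even residual kernel $E^{e,r}$ in place of the odd one, the scheme used in the proof of Lemma~\ref{lem:residual_small}; the only genuinely new ingredient is the lower bound for $|\Delta_\gamma(p,z,\lambda)|$ in the ``in the gap'' regime $z\in(z_{\gamma,\lambda}(\vec\pi),0)$.

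\emph{Step 1: reduction to a scalar estimate.} Since $\cP_\Delta$ is an orthogonal projection and $\cB^{e,r}(z,\lambda)$ is self-adjoint with integral kernel $A_\gamma^{e,r}(p,q,z,\lambda)$, bounding $|E^{e,r}|$ by $\|E^{e,r}(\cdot,\cdot,z)\|_\infty$ and applying the Cauchy--Schwarz inequality to $\big(\int_{\T^3}|f(p)|\,|\Delta_\gamma(p,z,\lambda)|^{-1/2}\,dp\big)^2\le\int_{\T^3}|\Delta_\gamma(p,z,\lambda)|^{-1}\,dp$ (valid whenever $\|f\|_{L^2}=1$) yields
\begin{equation*}
\|\cP_\Delta\cB^{e,r}(z,\lambda)\cP_\Delta\|\le\|\cB^{e,r}(z,\lambda)\|=\sup_{\|f\|_{L^2}=1}\big|(\cB^{e,r}(z,\lambda)f,f)_{L^2}\big|\le\frac{\lambda}{(2\pi)^3}\,\|E^{e,r}(\cdot,\cdot,z)\|_\infty\int_{\T^3}\frac{dp}{|\Delta_\gamma(p,z,\lambda)|}.
\end{equation*}
The $L^\infty$-bound $\|E^{e,r}(\cdot,\cdot,z)\|_\infty\le\frac{9\gamma^2}{(-z)^3}+\frac{\hat\gamma^2}{(\hat\gamma-z)^2(-z)}$ has already been established just before the lemma, so it remains only to estimate $\int_{\T^3}|\Delta_\gamma(p,z,\lambda)|^{-1}\,dp$ from above.

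\emph{Step 2: lower bound on $|\Delta_\gamma|$.} Starting from \eqref{gshezr64f} and using $z>\tau_{\gamma,\max}(0,\lambda)=z_{\gamma,\lambda}(\vec\pi)\ge z_{\gamma,\lambda}(p)$, so that the integrand in \eqref{gshezr64f} is negative, one has
\begin{equation*}
|\Delta_\gamma(p,z,\lambda)|=\frac{\lambda}{(2\pi)^3}\int_{\T^3}\frac{(z-z_{\gamma,\lambda}(p))\,dq}{(\cE_{\gamma,0}(p,q)-z_{\gamma,\lambda}(p))(\cE_{\gamma,0}(p,q)-z)}.
\end{equation*}
I would bound the denominators by $\cE_{\gamma,0}(p,q)-z_{\gamma,\lambda}(p)<\hat\gamma+\lambda$ (using $z_{\gamma,\lambda}(p)\ge-\lambda$ from \eqref{baho_z_gl_0pi} and \eqref{estimos_Epq1a}) and $\cE_{\gamma,0}(p,q)-z<\hat\gamma-z$ (again by \eqref{estimos_Epq1a} and $z<0$). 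For the numerator the crucial point is that, writing $z_{\gamma,\lambda}(0,q)=\hat z_{\gamma,\lambda}(q)+\epsilon(q)$ (by \eqref{w26tza1} and the evenness of $\hat z_{\gamma,\lambda}$) and recalling that $\hat z_{\gamma,\lambda}$ attains its maximum at $\vec\pi$ (\eqref{zgamlam_est_0pie}),
\begin{equation*}
z-z_{\gamma,\lambda}(p)\ge z_{\gamma,\lambda}(\vec\pi)-z_{\gamma,\lambda}(p)=\big[\hat z_{\gamma,\lambda}(\vec\pi)-\hat z_{\gamma,\lambda}(p)\big]+\big[\epsilon(\vec\pi)-\epsilon(p)\big]\ge\epsilon(\vec\pi)-\epsilon(p)=\epsilon(\vec\pi-p),
\end{equation*}
the last equality being the elementary identity $\epsilon(\vec\pi)-\epsilon(p)=\sum_{i=1}^3(1+\cos p_i)=\epsilon(\vec\pi-p)$. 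Hence $|\Delta_\gamma(p,z,\lambda)|\ge\frac{\lambda\,\epsilon(\vec\pi-p)}{(\hat\gamma+\lambda)(\hat\gamma-z)}$, and the measure-preserving change of variables $p\mapsto\vec\pi-p$ on $\T^3$ gives
\begin{equation*}
\int_{\T^3}\frac{dp}{|\Delta_\gamma(p,z,\lambda)|}\le\frac{(\hat\gamma+\lambda)(\hat\gamma-z)}{\lambda}\int_{\T^3}\frac{dp}{\epsilon(\vec\pi-p)}=\frac{(\hat\gamma+\lambda)(\hat\gamma-z)}{\lambda}\int_{\T^3}\frac{dp}{\epsilon(p)}.
\end{equation*}

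\emph{Step 3: conclusion.} Inserting this estimate and the $L^\infty$-bound on $E^{e,r}$ into the inequality of Step~1 gives \eqref{small_part0} directly. The main obstacle is Step~2: unlike the situation below the essential spectrum treated in Lemma~\ref{lem:residual_small}, here $|\Delta_\gamma(\vec\pi,z,\lambda)|\to0$ as $z\downarrow z_{\gamma,\lambda}(\vec\pi)$, so one cannot bound $|\Delta_\gamma(p,z,\lambda)|$ below by a multiple of $\epsilon(p)$; the right comparison function turns out to be $\epsilon(\vec\pi-p)$, and it is exactly the shift invariance $\int_{\T^3}\epsilon(\vec\pi-p)^{-1}\,dp=\int_{\T^3}\epsilon(p)^{-1}\,dp$ (both integrals finite since the dimension is three) that produces the constant in the form stated in \eqref{small_part0}. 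Everything else is a routine transcription of the corresponding steps carried out for $\cB^{o,r}$.
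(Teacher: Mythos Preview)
Your proof is correct and follows essentially the same approach as the paper's: both reduce to the scalar bound via the $L^\infty$-estimate on $E^{e,r}$ and Cauchy--Schwarz, then obtain the lower bound $|\Delta_\gamma(p,z,\lambda)|\ge \lambda\,\epsilon(\vec\pi-p)/[(\hat\gamma+\lambda)(\hat\gamma-z)]$ from the difference representation \eqref{gshezr64f}, the inequality $z-z_{\gamma,\lambda}(p)\ge \epsilon(\vec\pi-p)$, and the bounds $\cE_{\gamma,0}\le\hat\gamma$, $-z_{\gamma,\lambda}(p)\le\lambda$. Your write-up is in fact a bit more explicit than the paper's in two places (the change of variables $p\mapsto\vec\pi-p$ and the decomposition $z_{\gamma,\lambda}(0,q)=\hat z_{\gamma,\lambda}(q)+\epsilon(q)$), and you bound $\|\cP_\Delta\cB^{e,r}\cP_\Delta\|$ by the full $\|\cB^{e,r}\|$ rather than the supremum over $\sH_\Delta$, but these are cosmetic differences.
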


\begin{proof}
Since we are restricting ourselves to $\sH_\Delta,$  
\begin{align*}
\|\cP_\Delta \cB^{e,r}\cP_\Delta\| \le & \sup_{f\in\sH_\Delta,\,\|f\|=1} (\cB^{e,r}f,f)_{L^2} \le \frac{\lambda}{(2\pi)^3} \|E^{e,r}(\cdot,\cdot,z )\|_\infty \int_{\T^3}\frac{dp}{|\Delta_\gamma(p,z,\lambda)|}\\
\le & \frac{\lambda}{(2\pi)^3}\,\Big(\frac{9\gamma^2}{(-z)^3} + \frac{{\hat\gamma}^2}{(\hat\gamma-z)^2(-z)}\Big)\int_{\T^3}\frac{dp}{|\Delta_\gamma(p,z,\lambda)|}.
\end{align*}
By the choice of $z,$ 
$$
|\Delta_\gamma(p,z,\lambda)| = \frac{\lambda}{(2\pi)^3} \int_{\T^3}\frac{dq}{\cE_{\gamma,0}(p,q)-z}-1
$$
and hence increasing in $z.$ Moreover, 
\begin{align}\label{ashaudzee6}
|\Delta_\gamma(p,z,\lambda)|= & 
|\Delta_\gamma(p,z,\lambda) - \Delta_\gamma(p,z_{\gamma,\lambda}(p),\lambda)| \nonumber\\
= & \frac{\lambda}{(2\pi)^3} \int_{\T^3} \frac{(z - z_{\gamma,\lambda}(p))dq}{(\cE_{\gamma,0}(p,q)-z)(\cE_{\gamma,0}(p,q)-z_{\gamma,\lambda}(p))} 
\end{align} 
and in view of  \eqref{baho_z_gl_0pi} 
$$
z-z_{\gamma,\lambda}(p) > z_{\gamma,\lambda}(\vec\pi) - z_{\gamma,\lambda}(p) \ge \epsilon(\vec\pi) - \epsilon(p) = \epsilon(\vec\pi - p).
$$
Also, recalling $\cE_{\gamma,0}(p,q)\le \cE_{\gamma,\max}(0)\le \hat\gamma $ and $-z_{\gamma,\lambda}(p)\le -z_{\gamma,\lambda}(0)\le\lambda,$ 
$$
|\Delta_\gamma(p,z,\lambda)| \ge  \frac{\lambda \epsilon(\vec\pi-p)}{(\hat\gamma-z) (\hat\gamma + \lambda)}.
$$
Thus, 
$$
\|\cP_\Delta \cB^{e,r}(z,\lambda)\cP_\Delta\| \le 
(\hat\gamma-z) (\hat\gamma+\lambda)\,\Big(\frac{9\gamma^2}{(-z)^3} + \frac{{\hat\gamma}^2}{(\hat\gamma-z)^2(-z)}\Big)\frac{1}{(2\pi)^3}\int_{\T^3}\frac{dp}{\epsilon(p)}.
$$ 
\end{proof}

\begin{remark}
The estimate \eqref{small_part0} shows that to establish the smallness of $\cP_\Delta\cB^{e,r}\cP_\Delta$ we have to assume that 
$\lambda (-z)^{-2}\to0$ as $\lambda,|z|\to+\infty.$ In particular, $|z|$ should diverge faster than $\sqrt{\lambda}.$
\end{remark}

Now study the principal part $\cB^{e,p}$ of $\cB^e.$ 

\begin{lemma}\label{lem:eiegn_proj_ep}
The projection $\cB_\Delta^{e,p}:=\cP_\Delta\cB^{e,p}(z,\lambda)\cP_\Delta$  has three eigenvalues 
$$
e_{\gamma,1}^{e,p}(z,\lambda) = e_{\gamma,2}^{e,p}(z,\lambda) = \frac{\gamma\lambda}{2\cdot(2\pi)^3(\hat\gamma-z)^2} \int_{\T^3} \frac{(\cos p_1-\cos p_2)^2dp}{|\Delta_\gamma(p,z,\lambda)|}
$$
and 
$$
e_{\gamma,3}^{e,p}(z,\lambda) = \frac{\gamma\lambda}{3\cdot(2\pi)^3(\hat\gamma-z)^2} \int_{\T^3} \frac{(\cos p_1 + \cos p_2+\cos p_3 - 3\bar\beta)^2dp}{|\Delta_\gamma(p,z,\lambda)|},
$$
where 
$$
\bar\beta := \bar\beta(z,\lambda): = \int_{\T^3} \frac{\cos p_1\,dp}{|\Delta_\gamma(p,z,\lambda)|} \Big(\int_{\T^3}\frac{dp}{|\Delta_\gamma(p,z,\lambda)|}\Big)^{-1}.  
$$
\end{lemma}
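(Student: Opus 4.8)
The plan is to exploit the rank-three, separable structure of the kernel $\hat E_c$ from \eqref{hat_E_cosinus01}. Writing $g_i(p):=\tfrac{1}{\sqrt\gamma}+\sqrt\gamma\cos p_i$ for $i=1,2,3$, we have $\hat E_c(p,q)=\sum_{i=1}^3 g_i(p)\,g_i(q)$, and hence, with the shorthand $\Delta(p):=|\Delta_\gamma(p,z,\lambda)|$,
$$
\cB^{e,p}(z,\lambda)f=\frac{\lambda}{(2\pi)^3(\hat\gamma-z)^2}\sum_{i=1}^3 \frac{g_i}{\sqrt\Delta}\,\Big(f,\frac{g_i}{\sqrt\Delta}\Big)_{L^2},
$$
which is a self-adjoint operator of rank at most three.

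First I would compute the action of $\cP_\Delta$ on the generating vectors $g_i/\sqrt\Delta$. Recalling that $\cP_\Delta f=f-(f,\phi_0)\phi_0$ with $\phi_0=c_0\,\Delta^{-1/2}$, $c_0^{-2}=\int_{\T^3}\Delta^{-1}$, and that by symmetry of $\Delta$ under coordinate permutations $\int_{\T^3}\cos p_i\,\Delta^{-1}=\bar\beta\int_{\T^3}\Delta^{-1}$, a short computation (in which the constant $1/\sqrt\gamma$ cancels) gives
$$
\cP_\Delta\Big[\frac{g_i}{\sqrt\Delta}\Big]=\sqrt\gamma\,\frac{\cos p_i-\bar\beta}{\sqrt\Delta}=:\sqrt\gamma\,h_i,\qquad i=1,2,3.
$$
Consequently, using the self-adjointness of $\cP_\Delta$,
$$
\cB_\Delta^{e,p}=\cP_\Delta\cB^{e,p}(z,\lambda)\cP_\Delta=\frac{\gamma\lambda}{(2\pi)^3(\hat\gamma-z)^2}\sum_{i=1}^3 h_i\,(f,h_i)_{L^2},
$$
so the nonzero eigenvalues of $\cB_\Delta^{e,p}$ are exactly $\dfrac{\gamma\lambda}{(2\pi)^3(\hat\gamma-z)^2}$ times the eigenvalues of the Gram matrix $G=(G_{ij})_{i,j=1}^3$, $G_{ij}:=(h_i,h_j)_{L^2}$.

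Then I would diagonalize $G$. By the coordinate-permutation symmetry of $\Delta$, all diagonal entries coincide, $G_{ii}=:a$, and all off-diagonal entries coincide, $G_{ij}=:b$ for $i\ne j$; thus $G=(a-b)I+bJ$ with $J$ the $3\times 3$ all-ones matrix, so $G$ has eigenvalue $a-b$ of multiplicity two and eigenvalue $a+2b$ of multiplicity one. Symmetrizing in the variables $p_1,p_2$ yields
$$
a-b=\int_{\T^3}\frac{(\cos p_1-\bar\beta)(\cos p_1-\cos p_2)}{\Delta(p)}\,dp=\frac12\int_{\T^3}\frac{(\cos p_1-\cos p_2)^2}{\Delta(p)}\,dp,
$$
while evaluating the quadratic form of $G$ on $(1,1,1)$ gives
$$
3(a+2b)=\int_{\T^3}\frac{(\cos p_1+\cos p_2+\cos p_3-3\bar\beta)^2}{\Delta(p)}\,dp.
$$
Multiplying by $\dfrac{\gamma\lambda}{(2\pi)^3(\hat\gamma-z)^2}$ produces precisely the claimed values of $e_{\gamma,1}^{e,p}=e_{\gamma,2}^{e,p}$ and $e_{\gamma,3}^{e,p}$. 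Since $\Delta>0$ a.e.\ and the integrands above are strictly positive on sets of full measure (the functions $\cos p_1-\cos p_2$ and $\cos p_1+\cos p_2+\cos p_3-3\bar\beta$ being nonconstant), $G$ is positive definite; in particular $\cB_\Delta^{e,p}$ has rank exactly three and these three numbers are precisely its nonzero eigenvalues.

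There is no deep obstacle here: the argument is essentially bookkeeping once the separability of $\hat E_c$ is noticed. The only points requiring care are the projection identity $\cP_\Delta[g_i/\sqrt\Delta]=\sqrt\gamma\,(\cos p_i-\bar\beta)/\sqrt\Delta$ (which hinges on recognizing $\int_{\T^3}\cos p_i\,\Delta^{-1}=\bar\beta\int_{\T^3}\Delta^{-1}$ and on the cancellation of the $1/\sqrt\gamma$ term) and the symmetrization trick that rewrites the double eigenvalue $a-b$ as the manifestly nonnegative integral of $(\cos p_1-\cos p_2)^2/\Delta$.
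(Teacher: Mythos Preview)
Your proof is correct and follows essentially the same route as the paper: both recognize the separable rank-three structure of $\hat E_c$, project the generating vectors onto $\sH_\Delta$, reduce the eigenvalue problem to a $3\times3$ Gram-type matrix with equal diagonal entries $a$ and equal off-diagonal entries $b$ (by the coordinate-permutation symmetry of $\Delta$), and read off the eigenvalues $a-b$ (multiplicity two) and $a+2b$. Your presentation is slightly more streamlined in that you factor out $\gamma$ at the projection step via $\cP_\Delta[g_i/\sqrt\Delta]=\sqrt\gamma\,h_i$, whereas the paper carries the constant $\beta=\tfrac{1}{\sqrt\gamma}+\sqrt\gamma\,\bar\beta$ through and only simplifies at the end; but the substance is identical.
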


\begin{proof}
Note that $\cB^{e,p}$ is of rank three. Since its kernel is explicit and $p\mapsto \Delta_\gamma(p,z,\lambda)$ is symmetric, we can readily compute the projection: 
$$
\cB_\Delta^{e,p}(z,\lambda) f(p) = \frac{\lambda}{(2\pi)^3} \int_{\T^3} \frac{\sum_{i=1}^3 (\frac{1}{\sqrt\gamma} + \sqrt\gamma\cos p_i - \beta)(\frac{1}{\sqrt\gamma} + \sqrt\gamma\cos q_i - \beta)f(q)dq}{(\hat\gamma - z)^2\,\sqrt{|\Delta_\gamma(p,z,\lambda)|} \sqrt{|\Delta_\gamma(q,z,\lambda)|}},
$$
where 
$$
\beta:=\beta_\gamma(z,\lambda):= \int_{\T^3} \frac{(\frac{1}{\sqrt\gamma} + \sqrt\gamma\cos p_1)dp}{|\Delta_\gamma(p,z,\lambda)|} \Big(\int_{\T^3}\frac{dp}{|\Delta_\gamma(p,z,\lambda)|}\Big)^{-1}.
$$
Let us introduce 
$$
\Phi_i(p):=\frac{\sqrt\lambda}{(2\pi)^{3/2}(\hat\gamma-z)} \frac{\frac{1}{\sqrt\gamma } + \sqrt\gamma\cos p_i - \beta}{\sqrt{|\Delta_\gamma(p,z,\lambda)|}},\quad i=1,2,3,
$$
so that 
$$
\cB_\Delta^{e,p}(z,\lambda) f(p) = \sum_{i=1}^3\Phi_i(p)\int_{\T^3}\Phi_i(q)f(q)dq,\quad f\in L^{2,e}(\T^3).
$$
Note that $\theta>0$ is an eigenvalue of $\cB_\Delta^{e,p}(z,\lambda)$ if and only if it is solves 
$$
\det 
\begin{pmatrix}
\theta - a_{11} & -a_{12} & -a_{13} \\
-a_{21} & \theta-a_{22} & -a_{23}  \\
-a_{31} & -a_{32} & \theta - a_{33}
\end{pmatrix}
=0,
$$
where
$$
a_{ij}:= \int_{\T^3} \Phi_i(p)\Phi_j(p)dp,
$$
Since $p\mapsto \Delta_\gamma(p,z,\lambda)$ is symmetric,
$a_{11}=a_{22}=a_{33}=a$ and $a_{ij}=b$ for $i\ne j.$ Thus, $\theta$ should solve 
$$
0= (\theta-a)^3 - 2b^3-3b^2(\theta-a) = (\theta -a + b)^2(\theta - a - 2b\Big).
$$
This equation has three solutions 
$$
\theta_1=\theta_2= a - b = \frac{1}{2}\int_{\T^3}(\Phi_1(p)-\Phi_2(p))^2dp  \quad\text{and}\quad \theta_3=a+2b
= \frac{1}{3}\int_{\T^3} (\Phi_1(p) +\Phi_2(p) + \Phi_3(p))^2dp.
$$
As $\beta = \frac{1}{\sqrt\gamma} + \sqrt\gamma \bar\beta,$ we have 
$$
\theta_1=\theta_2 = \frac{\gamma\lambda}{2\cdot(2\pi)^3(\hat\gamma-z)^2} \int_{\T^3} \frac{(\cos p_1-\cos p_2)^2dp}{|\Delta_\gamma(p,z,\lambda)|}
$$
and 
$$
\theta_3 = \frac{\gamma\lambda}{3\cdot(2\pi)^3(\hat\gamma-z)^2} \int_{\T^3} \frac{(\cos p_1 + \cos p_2+\cos p_3 - 3\bar\beta)^2dp}{|\Delta_\gamma(p,z,\lambda)|}.
$$
\end{proof}

Finally, we are ready to prove Theorem \ref{teo:eiegn_H_gap}.
Note that $z\mapsto e_{\gamma,i}^{e,p}(z,\lambda)$ is continuous in $(z_{\gamma,\lambda}(\vec\pi),0),$ but unlike its odd counterpart $e_{\gamma,i}^{o,p}(\cdot,\lambda),$ is not monotone. It is rather a product of one decreasing and one incresing functions, and hence, we expect some monotone decreasing near $z_{\gamma,\lambda}(\vec\pi),$ say in $(z_{\gamma,\lambda}(\pi),z_{\gamma,\lambda}(\pi)+ T_\lambda),$ where $T_\lambda$ is a nondecreasing function of $\lambda $ having sublinear growth with $T_\lambda\to+\infty$ as $\lambda\to+\infty$ and 
\begin{equation}\label{sgt56er}
\limsup\limits_{\lambda\to+\infty} \frac{\lambda}{T_\lambda}=+\infty.
\end{equation}
In particular, if $z$ ranges in $(z_{\gamma,\lambda}(\pi),z_{\gamma,\lambda}(\pi)+ T_\lambda),$ then by \eqref{small_part0} and \eqref{asymptotic} 
\begin{equation}
\label{agsr51x}
\|\cP_\Delta \cB^{e,r}(z,\lambda)\cP_\Delta\|\le \frac{C_3(\gamma)\lambda}{[z_{\gamma,\lambda}(\vec\pi) + T_\lambda]^2} \le \frac{C_3(\gamma)}{\lambda}
\end{equation}
for some $C_3(\gamma)>0$ and for all large $\lambda,$ and hence, for such $\lambda$ the residual operator has small norm.
As in the proof of Theorems \ref{teo:exist_eigen_H} and \ref{teo:exist_eigen_H_gap}  let us set 
\begin{equation}\label{new_definiz12}
z =z_{\gamma,\lambda}(\vec\pi) +\alpha \quad\text{for $\alpha\in (0,T_\lambda)$}
\end{equation}
and study $\alpha\mapsto \tilde e_{\gamma,i}^{e,p}(\alpha,\lambda):=e_{\gamma,i}^{e,p}(z_{\gamma,\lambda}(\vec\pi)+\alpha,\lambda).$ In this case \eqref{ashaudzee6} 
\begin{equation}\label{you_only_have12s}
|\Delta_\gamma(p,z_{\gamma,\lambda}(\vec\pi)+\alpha,\lambda)| = \frac{\lambda(z_{\gamma,\lambda}(\vec\pi) - z_{\gamma,\lambda}(p)+\alpha)}{[-z_{\gamma,\lambda}(\vec\pi)-\alpha]\,[-z_{\gamma,\lambda}(p)]} \Big[1+o(1)\Big],
\end{equation}
where $o(1)\to0$ as $\lambda\to0$ uniformly in $p.$
Then 
$$
\tilde e_{\gamma,1}^{e,p}(\alpha,\lambda) = 
\tilde e_{\gamma,2}^{e,p}(\alpha,\lambda) = 
\frac{\gamma\,[-z_{\gamma,\lambda}(\vec\pi)-\alpha]\,[-z_{\gamma,\lambda}(p)]}{2\cdot (2\pi)^3[\hat\gamma- z_{\gamma,\lambda}(\vec\pi)) -\alpha]^2} \int_{\T^3} \frac{(\cos p_1-\cos p_2)^2[1+o(1)]dp}{z_{\gamma,\lambda}(\vec\pi) -z_{\gamma,\lambda}(p) +\alpha}.
$$
By \eqref{asymptotic} and \eqref{w26tza1} the difference  $z_{\gamma,\lambda}(\vec\pi) -z_{\gamma,\lambda}(p) $ is uniformly bounded in $\lambda$ and converges to $\epsilon(\vec\pi)-\epsilon(p) = \epsilon(\vec\pi-p)$ as $\lambda\to+\infty.$ Using this we can readily show that 
\begin{equation}\label{e_12_even_psoasd}
\tilde e_{\gamma,i}^{e,p}(\alpha,\lambda) = \gamma e_{i}(\alpha)[1+o(1)],\quad i=1,2,
\end{equation}
where
$$
e_1(\alpha)=e_2(\alpha):=\frac{1}{2\cdot(2\pi)^3}\int_{\T^3} 
\frac{(\cos p_1-\cos p_2)^2dp}{e(\vec\pi-p) +\alpha}.
$$
Similarly, inserting \eqref{you_only_have12s} in the definition of $\bar\beta$ in Lemma \ref{lem:eiegn_proj_ep} we obtain 
$$
\bar\beta(z_{\gamma,\lambda}(\vec\pi)+\alpha,\lambda) = 
\bar\beta_\alpha\,[1+o(1)],\quad \bar\beta_\alpha:=\int_{\T^3}\frac{\cos p_1dp}{\epsilon(\vec\pi-p)+\alpha}\Big(\int_{\T^3}\frac{dp}{\epsilon(\vec\pi-p)+\alpha}\Big)^{-1},
$$
as $\lambda\to+\infty.$ Thus, for large $\lambda,$
\begin{equation}\label{e_3_even_psoasd}
\tilde e_{\gamma,3}^{e,p}(z_{\gamma,\lambda}(\vec\pi)+\alpha,\lambda) = e_3(\alpha) [1+o(1)],
\end{equation}
where
$$
e_3(\alpha) := 
\frac{1}{3\cdot(2\pi)^3} \int_{\T^3}\frac{(\cos p_1+\cos p_2+\cos p_3-3\bar\beta_\alpha)^2dp}{\epsilon(\vec\pi-p)+\alpha}.
$$
Combining \eqref{e_12_even_psoasd}, \eqref{e_3_even_psoasd}, \eqref{small_part0}, \eqref{new_definiz12} and \eqref{sgt56er} as well as the perturbation theory for linear operators, we conclude that for large $\lambda$ and for any $\alpha$ satisfying \eqref{new_definiz12} the operator $\cP_\Delta\cB_{\gamma,0}^e(z_{\gamma,\lambda}(\vec\pi)+\alpha,\lambda)\cP_\Delta$ has exactly three eigenvalues $e_{\gamma,i}^e(\alpha,\lambda)$ with principal parts $e_i(\alpha)[1+o(1)],$ i.e., 
\begin{equation}\label{eigen_Be_boars}
e_{\gamma,i}^e(\alpha,\lambda) = \gamma e_i(\alpha) [1+o(1)]\quad\text{as $\lambda\to+\infty,$}
\end{equation}
while by \eqref{agsr51x} all other eigenvalues (if any) have small norm. 

Notice that $e_i(\alpha)\to0$ as $\alpha\to+\infty.$ Since $e_1(\cdot)=e_2(\cdot)$ strictly decreases, if we set 
$$
\frac{1}
{\tilde \gamma_2}:=e_1(0):=\frac{1}{2\cdot(2\pi)^3}\int_{\T^3} 
\frac{(\cos p_1-\cos p_2)^2dp}{e(\vec\pi-p)} \approx 0.185237,\quad \text{i.e.}\quad \tilde \gamma_2 \approx 5.398489,
$$
then for any $\gamma<\tilde \gamma_2$ and for any $\alpha>0,$ we have $\gamma e_1(\alpha)=\gamma e_2(\alpha)<1.$ Thus, by \eqref{eigen_Be_boars} there exists $\lambda_7:=\lambda_7(\gamma)>0$ such that for any $\lambda>\lambda_7,$ $e_{\gamma,i}^e<1$ and thus, in this case two principal eigenvalues of $\cP_\Delta\cB^e\cP_\Delta$ are less than $1.$ 
On the other hand, for any $\gamma>\tilde \gamma_2,$ we can find $\lambda_8:=\lambda_8(\gamma)>0$ such that for any $\lambda>\lambda_8$ we can choose $\alpha:=\alpha_{\gamma,\lambda}$ such that 
$e_{\gamma,i}^e(\alpha,\lambda)=1.$ In this case, $1$ is an eigenvalue of $\cP_\Delta\cB^e\cP_\Delta$ at least of multiplicity two.

Similarly, set 
$$
\frac{1}{\gamma_2^0}:=\sup_{\alpha>0}\,e_3(\alpha) \ge e_3(0)\approx 0.340538 > e_1(0)=\frac{1}{\tilde \gamma_2^0}.
$$
Then for $\gamma<\gamma_2^0$ we can find $\lambda_9:=\lambda_9(\gamma)>0$ such that for $\lambda>\lambda_9$ the third principal eigenvalue $e_{\gamma,3}^e$ of $\cP_\Delta\cB^e\cP_\Delta$ is less than $1.$ Similarly, for $\gamma>\gamma_2^0$ we can find $\lambda_{10}:=\lambda_{10}(\gamma)>0$ such that for $\lambda>\lambda_{10}$ there exists at least one $\alpha:=\alpha_{\gamma,\lambda}>0$ such that $e_{\gamma,3}^{e}(\alpha,\lambda)=1;$ indeed, due to the lack of the monotonicity, this equation may have more solutions. 
Combining these observations with the Birman-Schwinger-type principle (Theorem \ref{teo:bsh_principle}) we obtain the following result. 

\begin{proposition}\label{prop:gap_eigenchuks}
Let $\tilde\gamma_2^0>\gamma_2^0>0$ be as above. Then: 
\begin{itemize}
\item[\rm(a)] for any $\gamma<\gamma_2^0$ there exists $\lambda_{11}:=\lambda_{11}(\gamma)>0$ such that for any $\lambda>\lambda_{11}$ the operator $H_{\gamma,\lambda}(0)$ has no eigenvalues in the portion $[z_{\gamma,\lambda}(\vec\pi),z_{\gamma,\lambda}(\vec\pi)+T_\lambda]$ of the gap;

\item[\rm(b)] for any $\gamma\in(\gamma_2^0,\tilde \gamma_2^0)$ there exists $\lambda_{12}:=\lambda_{12}(\gamma)>0$ such that for any $\lambda>\lambda_{12}$ the operator $H_{\gamma,\lambda}(0)$ has at least one eigenvalue in $[z_{\gamma,\lambda}(\vec\pi),z_{\gamma,\lambda}(\vec\pi)+T_\lambda];$

\item[\rm(c)] for any $\gamma>\tilde \gamma_2^0$ there exists $\lambda_{13}:=\lambda_{13}(\gamma)>0$ such that for any $\lambda>\lambda_{13}$ the operator $H_{\gamma,\lambda}(0)$ has at least three eigenvalues in $[z_{\gamma,\lambda}(\vec\pi),z_{\gamma,\lambda}(\vec\pi)+T_\lambda].$
\end{itemize}
\end{proposition}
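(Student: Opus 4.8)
The plan is to read the Proposition off from the Birman--Schwinger--type principle (Theorem~\ref{teo:bsh_principle}) together with the analysis of $\cB^e_{\gamma,0}(z,\lambda)$ carried out just above its statement. First I would record that $\tau_{\gamma,\max}(0,\lambda)=z_{\gamma,\lambda}(\vec\pi)$ and, for $\lambda$ satisfying \eqref{large_lambda0123}, that $z_{\gamma,\lambda}(\vec\pi)=-\lambda+3(3+\gamma)+O(\lambda^{-1})$; since $T_\lambda$ grows sublinearly this already forces, for all large $\lambda$, the interval $[z_{\gamma,\lambda}(\vec\pi),z_{\gamma,\lambda}(\vec\pi)+T_\lambda]$ to sit inside the gap $(z_{\gamma,\lambda}(\vec\pi),0)$ of $\sigma_\ess(H_{\gamma,\lambda}(0))$, its left endpoint lying in the essential spectrum and hence being irrelevant for eigenvalue counting. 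By Theorem~\ref{teo:bsh_principle}, a point $z$ of this interval is an eigenvalue of $H_{\gamma,\lambda}(0)$ of multiplicity $m$ if and only if $1$ is an eigenvalue of multiplicity $m$ of $\cB_{\gamma,0}(z,\lambda)$ with eigenvector in $\sH_\Delta$; since $\cB_{\gamma,0}(z,\lambda)=\cB^e(z,\lambda)\oplus\cB^o(z,\lambda)$ and $\cB^o(z,\lambda)\le0$ by Proposition~\ref{prop:z_in_gap}, counting eigenvalues of $H_{\gamma,\lambda}(0)$ in that window reduces, as explained before the Proposition, to counting the eigenvalues of $\cP_\Delta\cB^e(z,\lambda)\cP_\Delta$ equal to $1$, with $z$ ranging over the window, and we may do this inside each of the invariant subspaces $L^{2,e,s_2}(\T^3)$, $L^{2,e,a_2}(\T^3)$, $L^{2,e,\perp}(\T^3)$ of Proposition~\ref{prop:z_in_gap}.

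Next I would fix the parametrisation $z=z_{\gamma,\lambda}(\vec\pi)+\alpha$ with $\alpha\in(0,T_\lambda)$ and split $\cB^e=\cB^{e,p}+\cB^{e,r}$. Lemma~\ref{lem:residual_B_e} together with \eqref{agsr51x} gives $\|\cP_\Delta\cB^{e,r}(z,\lambda)\cP_\Delta\|\le C_3(\gamma)/\lambda$ uniformly in $\alpha\in(0,T_\lambda)$, while Lemma~\ref{lem:eiegn_proj_ep} says the finite--rank principal part $\cP_\Delta\cB^{e,p}(z,\lambda)\cP_\Delta$ has exactly three nonzero eigenvalues: a double one $e^{e,p}_{\gamma,1}=e^{e,p}_{\gamma,2}$, whose two copies sit in $L^{2,e,a_2}(\T^3)$ and $L^{2,e,\perp}(\T^3)$, and a simple one $e^{e,p}_{\gamma,3}$ sitting in $L^{2,e,s_2}(\T^3)$. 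Combining these with the perturbation computation leading to \eqref{eigen_Be_boars}, for all large $\lambda$ the operator $\cP_\Delta\cB^e(z_{\gamma,\lambda}(\vec\pi)+\alpha,\lambda)\cP_\Delta$ has exactly three eigenvalues $e^e_{\gamma,i}(\alpha,\lambda)=\gamma e_i(\alpha)[1+o(1)]$ bounded away from $0$, one in each invariant subspace, all other eigenvalues being $\le C_3(\gamma)/\lambda$; moreover $\alpha\mapsto e^e_{\gamma,i}(\alpha,\lambda)$ is continuous, since the operator family is norm--continuous in $\alpha$ and these eigenvalues stay isolated. I would also keep in hand that $e_1=e_2$ is strictly decreasing with $e_1(0)=1/\tilde\gamma_2^0$, that $e_i(\alpha)\to0$ as $\alpha\to+\infty$ for every $i$, and that $\sup_{\alpha>0}e_3(\alpha)=1/\gamma_2^0$.

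The three assertions then follow. For (a), if $\gamma<\gamma_2^0$ then $\gamma e_3(\alpha)\le\gamma/\gamma_2^0<1$ and $\gamma e_1(\alpha)=\gamma e_2(\alpha)\le\gamma/\tilde\gamma_2^0<\gamma/\gamma_2^0<1$ for every $\alpha\ge0$, so for $\lambda$ large every eigenvalue of $\cP_\Delta\cB^e(z,\lambda)\cP_\Delta$ stays below $1$ throughout the window; hence $1$ is not an eigenvalue of $\cB_{\gamma,0}(z,\lambda)$ there, and Theorem~\ref{teo:bsh_principle} yields (a). For (b), if $\gamma\in(\gamma_2^0,\tilde\gamma_2^0)$ then $\gamma\sup_\alpha e_3(\alpha)=\gamma/\gamma_2^0>1$, so there is $\alpha_*>0$ with $\gamma e_3(\alpha_*)>1$, and $e_3(\alpha)\to0$ furnishes $\alpha_{**}>\alpha_*$ with $\gamma e_3(\alpha_{**})<1$; for large $\lambda$ both lie in $(0,T_\lambda)$ and, by \eqref{eigen_Be_boars}, $e^e_{\gamma,3}(\alpha_*,\lambda)>1>e^e_{\gamma,3}(\alpha_{**},\lambda)$, so the intermediate value theorem gives $\alpha_{\gamma,\lambda}$ in between with $e^e_{\gamma,3}(\alpha_{\gamma,\lambda},\lambda)=1$, and Theorem~\ref{teo:bsh_principle} turns the corresponding $z$ into an eigenvalue of $H_{\gamma,\lambda}(0)$ in the window. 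For (c), if $\gamma>\tilde\gamma_2^0$ then in addition $\gamma e_1(0)=\gamma/\tilde\gamma_2^0>1$; since $e_1=e_2$ and $e_3$ all tend to $0$, running the same intermediate--value argument separately in each invariant subspace -- twice on the branch near $\gamma e_1=\gamma e_2$ (its copies in $L^{2,e,a_2}(\T^3)$ and $L^{2,e,\perp}(\T^3)$) and once on the branch near $\gamma e_3$ in $L^{2,e,s_2}(\T^3)$ -- produces, for large $\lambda$, three values of $\alpha$ in $(0,T_\lambda)$ at which the relevant eigenvalue equals $1$, hence at least three eigenvalues of $H_{\gamma,\lambda}(0)$, counted with multiplicity, in the window.

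The main obstacle is the uniformity underlying the second paragraph rather than any individual computation: one must ensure the three eigenvalue branches $e^e_{\gamma,i}(\cdot,\lambda)$ are genuinely continuous and stay separated by a fixed amount from the $O(1/\lambda)$ remainder of the spectrum throughout the \emph{whole} moving window $\alpha\in(0,T_\lambda)$, whose length diverges, and one must keep exact track of which branch lives in which of $L^{2,e,s_2}(\T^3)$, $L^{2,e,a_2}(\T^3)$, $L^{2,e,\perp}(\T^3)$ so that the count in (c) is honest. This is exactly where the bound $\|\cP_\Delta\cB^{e,r}\cP_\Delta\|\le C_3(\gamma)/\lambda$, the $\gamma$--independence of the limits $e_i(\alpha)$, and the sublinear growth of $T_\lambda$ -- which keeps $z=z_{\gamma,\lambda}(\vec\pi)+\alpha$ negative and of size $\gtrsim\lambda$, as needed for the residual estimate -- all enter, and making these estimates uniform in $\alpha$ is the technical heart of the argument.
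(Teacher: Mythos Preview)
Your proposal is correct and follows the paper's own argument: reduce via Theorem~\ref{teo:bsh_principle} and Proposition~\ref{prop:z_in_gap} to the eigenvalue~$1$ problem for $\cP_\Delta\cB^e\cP_\Delta$, split $\cB^e=\cB^{e,p}+\cB^{e,r}$, control the residual by \eqref{agsr51x}, and run the intermediate value theorem on the three principal eigenvalue branches described by Lemma~\ref{lem:eiegn_proj_ep} and \eqref{eigen_Be_boars}. Your explicit tracking of the three branches through the invariant subspaces $L^{2,e,s_2}(\T^3)$, $L^{2,e,a_2}(\T^3)$, $L^{2,e,\perp}(\T^3)$ is a slightly more careful piece of bookkeeping for the multiplicity count in (c) than the paper writes out, but the overall strategy is identical.
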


Finally, by the explicit relationship \eqref{f_defined_by_psi} between the eigenvalues of $H_{\gamma,\lambda}(K)$ and $\cB_{\gamma,K}(z,\lambda),$ as well as the evenness of eigenvectors of $\cB^e$ it follows that any eigenfunction corresponding to an eigenvalue $z\in (z_{\gamma,\lambda}(\vec\pi),z_{\gamma,\lambda}(\vec\pi)+T_\lambda)$ is in fact an even function in $\T^3\times\T^3.$

\begin{remark}
$\,$
\begin{itemize}
\item A direct computation shows that for any $\alpha\ge\gamma,$
$$
\gamma e_1(\alpha) \le  \gamma e_1(\gamma) <1.
$$
Thus, for any $\gamma>\tilde\gamma_2^0$ and $\lambda>\lambda_{13}$ two eigenvalues of $H_{\gamma,\lambda}(0)$ in the gap $[z_{\gamma,\lambda}(\vec\pi),z_{\gamma,\lambda}(\vec\pi)+T_\lambda]$ belong in fact $[z_{\gamma,\lambda}(\vec\pi),z_{\gamma,\lambda}(\vec\pi)+\gamma].$ 

\item Similarly, since $|\bar\beta_\alpha|<1$ in Lemma \ref{lem:eiegn_proj_ep}, we can compute 
$$
e_3(\alpha) < \frac{7}{2\alpha}<\frac{1}{\gamma}
$$
provided that $\alpha>3.5\gamma.$
Thus, for $\gamma\in(\gamma_2^0,\tilde \gamma_2^0)$ and $\lambda>\lambda_{12}$ one eigenvalue of $H_{\gamma,\lambda}(0)$ lying in $[z_{\gamma,\lambda}(\vec\pi),z_{\gamma,\lambda}(\vec\pi)+T_\lambda]$ in fact belongs to the interval $[z_{\gamma,\lambda}(\vec\pi),z_{\gamma,\lambda}(\vec\pi)+3.5\gamma].$

\end{itemize}

\end{remark}

\appendix 
\section{Proof of Theorem \ref{teo:ess_spectrum_3}}

%\begin{proof}
Let $H_{\gamma,\lambda}^{(3)}(K)$ be the extension of $H_{\gamma,\lambda}(K)$ to whole $L^2((\T^3)^2)$ (essentially given by the same formulas) so that by the Hunziker-van Winter-Zhislin theorem
\begin{equation*}%\label{ahsjazt67c}
\sigma_\ess(H_{\gamma,\lambda}^{(3)}(K)) = \sigma(H_{\gamma,\lambda}^{(2)}(K)).
\end{equation*}
Since $H_{\gamma,\lambda}(K)$ is the restriction of $H_{\gamma,\lambda}^{(3)}(K)$ onto $L^{2,a}((\T^3)^2),$ we have 
$$
\sigma_\ess(H_{\gamma,\lambda}(K)) \subset \sigma_\ess(H_{\gamma,\lambda}^{(3)}(K)) = \sigma(H_{\gamma,\lambda}^{(2)}(K))
$$
To prove the converse inclusion we construct a test sequence as follows. For $p\in\T^3$ and $r>0,$ let $B_r(p):=\{q\in\T^3:\,|q-p|<r\}.$

Assume first that $z_0\in [\cE_{\gamma,\min}(K),\cE_{\gamma,\max}(K)]$ so that there exists $p_0,q_0\in\T^3$ such that $\cE_{\gamma,K}(p_0,q_0)=z_0.$  Define the open sets
$$
U_k:=B_{2^{-k}}(p_0)\setminus  \cl{B_{2^{-k-1}}(p_0)} \quad\text{and}\quad W_k:=B_{2^{-k-1}}(q_0)\setminus \cl{ B_{2^{-k-2}}(q_0) },\quad k\ge1.
$$
Note that $U_k\cap W_k=\emptyset$ for all large $k.$ Indeed, if $p_0\ne q_0,$ then for large $k$ (depending on $|p_0-q_0|$) the balls $B_{2^{-k}}(p_0)$ and $B_{2^{-k-1}}(q_0)$ becomes disjoint. Otherwise, $U_k$ and $W_k$ are two disjoint annuli.
Set 
$$
\psi_k(p,q):= \frac{\chi_{U_k}^{}(p)\chi_{W_k}^{}(q)}{\sqrt{|U_k|}\sqrt{|W_k|}},\quad 
\phi_k(p,q):= \frac{\psi_k(p,q) - \psi_k(q,p)}{\sqrt2}.
$$
Clearly, $\phi_k\in L^{2,a}((\T^3)^2),$ and by the choice of $U_k$ and $W_k,$ $\phi_k$ and $\phi_l$ are orthonormal for all large $k,l.$ Then for such $k$ 
$$
(V_{\alpha3}\phi_k,\phi_k) = \frac{|U_k| + |W_k|}{\sqrt2}\to0,
$$
and thus, by the nonnegativity of $V_{\alpha3}$ we get $\|V_{\alpha3}\phi_k\|_{L^2}\to0.$ Moreover,
\begin{align*}
\|(H_{\gamma,0}(K) - z_0I)\phi_k\|_{L^2}^2 = & \|(H_{\gamma,0}(K) - z_0I)\psi_k\|_{L^2}^2  \\
\le & \frac{1}{|U_k|\,|W_k|} \int_{U_k\times W_k}|\cE_{\gamma,0}(p,q)-\cE_{\gamma,K}(p_0,q_0)|^2dpdq \to0
\end{align*}
as $\cE_{\gamma,K}$ is continuous. Hence,  $\|(H_{\gamma,\lambda}(K) - z_0)\phi_k\|_{L^2}^2\to0,$ and by the Weyl's criterion $z_0\in \sigma_\ess(H_{\gamma,\lambda}(K)).$

Next, assume that $z_0\in\sigma(H_{\gamma,\lambda}^{(2)}(K))\setminus [\cE_{\gamma,\min}(K),\cE_{\gamma,\max}(K)].$ In view of \eqref{shdzu67ec} there exists $q_0\in\T^3$ such that $z_0<\cE_{\gamma,\min}(K)\le \cE_{\gamma,\min}(K,q_0)$ is a discrete eigenvalue  of the effective one-particle operator $h_{\gamma,\lambda}(K,q_0),$ associated to the decomposition \eqref{decompos1e} of the channel operator. Since $h_{\gamma,\lambda}(K,\cdot)$ is an analytic family, there exists $k_0>0$ such that for any $q\in B_{2^{-k_0}}(q_0)$ the fiber  $h_{\gamma,\lambda}(K,q)$ has a unique eigenvalue $z(q)<\cE_{\gamma,\min}(K)\le \cE_{\gamma,\min}(K,q).$ Moreover, $q\mapsto z(q)$ is continuous in $B_{2^{-k_0}}(q_0)$ and we may assume that the associated normalized eigenfunctions  $q\mapsto f_{q}(p)\in L^2(\T^3)$ are also continuous in $B_{2^{-k_0}}(q_0).$ Clearly, the map $p\mapsto f_q(p)$ can be chosen real-analytic on $\T^3.$
As above set $U_k:=B_{2^{-k}}(q_0)\setminus \overline{B_{2^{-k-1}}(q_0)}$ for $k>k_0$ and define 
$$
\psi_k(p,q):=\frac{\chi_{U_k}(q)f_q(p)}{\sqrt{|U_k|}},\quad 
\phi_k(p,q) = \frac{\psi_k(p,q)-\psi_k(q,p)}{\sqrt{2 + \alpha_k}},
$$
where
$$
\alpha_k:= \frac{2}{|U_k|}\int_{U_k\times U_k} f_q(p)f_p(q)\,dpdq.
$$
By the H\"older inequality
$$
|\alpha_k| \le \frac{2}{|U_k|} \int_{U_k\times U_k} |f_q(p)|^2dpdq\to0
$$
as $(p,q)\mapsto f_q(p)$ is continuous in $B_{2^{-k_0}}(q_0)\times \T^3.$ Moreover, 
$$
\int_{\T^3\times\T^3} |\psi_k(p,q)|^2dpdq = \frac{1}{|U_k|}\int_{U_k} dq \int_{\T^3} |f_q(p)|^2dp = 1,
$$
we have $\|\phi_k\|_{L^2}=1.$ Furthermore, for any $g\in L^{2}((\T^3)^2)\cap C((\T^3)^2),$
\begin{align*}
\Big|\int_{\T^3\times\T^3} \phi_k(p,q)\cl{g(p,q)}\,dpdq \Big| \le & \frac{1}{\sqrt{2+\alpha_k}\sqrt{|U_k|}} \int_{U_k} dq \int_{\T^3} |f_q(p)| \,|g(p,q)-g(q,p)|\,dp \\
\le & \frac{1}{\sqrt{2+\alpha_k}\sqrt{|U_k|}} \int_{U_k} dq \sqrt{\int_{\T^3} |g(p,q)-g(q,p)|^2dp}\to0
\end{align*}
as $k\to+\infty,$ where in the last inequality we used the H\"older inequality together with assumption $\|f_q\|_{L^2}=1$ for all $q\in U_k.$ By approximation this inequality holds for all  $g\in L^{2}((\T^3)^2),$ and hence, $\phi_k\wk0$ in $L^2((\T^3)^2).$ Moreover, by the definition of the direct integral decomposition and the choice of $\psi_k$ and $f_q,$ for any $q,$
$$
H_{\gamma,\lambda}^{(2)}(K) \psi_k(\cdot,q) = 
\begin{cases}
0 & \text{$q\in \T^3\setminus U_k,$} \\
\frac{1}{\sqrt{|U_k|}}\,h_{\gamma,\lambda}(K,q)f_q & \text{$q\in U_k$}
\end{cases}
= 
\begin{cases}
0 & \text{$q\in \T^3\setminus U_k,$} \\
\frac{1}{\sqrt{|U_k|}}\,z(q)f_q & \text{$q\in U_k.$}
\end{cases}
$$
Thus, 
$$
\|(H_{\gamma,\lambda}^{(2)}(K) -z_0I)\psi_k\|_{L^2}^2 = \frac{1}{|U_k|}\int_{U_k\times \T^3} |z(q)-z_0|^2 |f_q(p)|^2dpdq = \frac{1}{|U_k|}\int_{U_k} |z(q)-z_0|^2dq \to0
$$
as $k\to+\infty.$
Moreover, as
$$
V_{23}\psi_k(p,q) = (2\pi)^{-3/2}\int_{\T^3}\psi_k(p,s)ds = (2\pi)^{-3/2}\,\frac{1}{\sqrt{|U_k|}}\int_{U_k}f_s(p)ds,
$$
using again the H\"older inequality
$$
\int_{\T^3\times\T^3}|V_{23}\psi_k(p,q) |^2dpdq \le \frac{1}{(2\pi)^3|U_k|} \int_{\T^3\times\T^3\times U_k} |U_k|\,|f_s(p)|^2dpdqds = |U_k|\to0
$$
as $k\to+\infty.$ Finally, as 
$$
\int_{\T^3\times\T^3} H_{\gamma,\lambda}^{(2)}(K) \psi_k(p,q)\psi_k(q,p)\,dpdq = \frac{1}{|U_k|} \int_{U_k\times U_k} z(q)f_p(q)\,f_q(p)dq,
$$
and hence, by the continuity of $(p,q)\mapsto  z(q)f_p(q)\,f_q(p)$ in $B_{2^{-k_0}}(q_0)\times \T^3$ it follows that the above integral also converges to $0$ as $k\to+\infty.$ The above discussions and the definition of $\phi_k$ imply
$$
\|(H_{\gamma,\lambda}(K) - z_0I)\phi_k\|_{L^2}^2 \to0
$$
as $k\to+\infty.$ Thus, again by the Weyl's criterion, $z_0\in \sigma_\ess(H_{\gamma,\lambda}(K)).$
%\end{proof}


\begin{thebibliography}{ok}

\bibitem{ALM:2004} S. Albeverio, S. Lakaev, Z. Muminov: Schr\"odinger operators on lattices. The Efimov effect and discrete spectrum asymptotics. Ann. Inst. Henri Poincar\'e {\bf5} (2004), 743--772.

\bibitem{ALMM:2006} S. Albeverio, S. Lakaev, K. Makarov, Z. Muminov: The threshold effects for the two-particle Hamiltonians on lattices.  Commun. Math. Phys. {\bf262} (2006), 91--115.

\bibitem{BT:2017} G. Basti, A. Teta:  Efimov effect for a three-particle system with two identical fermions. Ann. Henri Poincar\'e {\bf18} (2017), 3975--4003.

\bibitem{BMO:2018} S. Becker, A.  Michelangeli, A. Ottolini: Spectral analysis of the $2 + 1$ fermionic trimer with contact interactions. Math. Phys. Anal. Geom. {\bf21} (2018).

\bibitem{BChK:1986} D. Bolle, K. Chadan, G. Karner: On a sufficient condition of the existence of $N$-particle bound states. J. Phys. A: Math. Gen. {\bf19} (1986), 2337--2344.


\bibitem{ChM:1980} K. Chadan, A. Martin: A sufficient condition for the existence of bound states in a potential without spherical symmetry J. Phys. Lett. {\bf41} (1980), 205--206.


\bibitem{FFT:2023} D. Fermi, D. Ferretti,  A. Teta: Rigorous derivation of the Efimov effect in a simple model. 
Lett. Math. Phys. {\bf113} (2023), 113.


\bibitem{Gridnev:2013} D. Gridnev:  Why there is no Efimov effect for four bosons and related results on the finiteness of the discrete spectrum. J. Math. Phys. {\bf54} (2013), 042105.

\bibitem{Gridnev:2014} D. Gridnev: Three resonating fermions in flatland: proof of the super Efimov effect and the exact discrete spectrum asymptotics. J. Phys. A: Math. Theor. {\bf47} (2014), 505204.

\bibitem{HS:2000} W. Hunziker, I. Sigal:  The quantum $N$-body problem. J. Math. Phys. {\bf41} (2000), 3448--3510.

\bibitem{KhM:2018} Sh.Yu. Kholmatov, Z.I. Muminov:  Existence of bound states of $N$-body problem in an
optical lattice. J. Phys. A: Math. Theor. {\bf51} (2018), 265202.

\bibitem{KhLA:2022} Sh. Kholmatov, S. Lakaev, F. Almuratov: On the spectrum of Schr\"odinger-type operators on two dimensional lattices. J. Math. Anal. Appl. {\bf514} (2022), 126363.


\bibitem{KSII:1980} M. Klaus, B. Simon:  Coupling constant thresholds in nonrelativistic quantum mechanics. II. Two cluster thresholds in $N$-body systems. Commun. Math. Phys. {\bf78} (1980), 153--168.

\bibitem{Lakaev:1993} S. Lakaev: On Efimov's effect in a system of three identical quantum particles. Funct. Anal. Appl. {\bf27} (1993), 166--175. 

\bibitem{LKhKh:2021} S. Lakaev, Sh. Kholmatov, Sh. Khamidov: Bose-Hubbard models with on-site and nearest-neighbor interactions: Exactly solvable case.
J. Phys. A: Math. Theor. {\bf54}  (2021), 245201.

%\bibitem{LA:2014} S. Lakaev, Sh. Alladustov: Positivity of eigenvalues of the two-particle Schr\"odinger operator on a lattice. Theor. Math. Phys. {\bf178} (2014), 390--402.

\bibitem{LDaKh:2016} S. Lakaev, G. Dell'Antonio, A. Khalkhuzhaev:  Existence of an isolated band in a system of three particles in an optical lattice. J. Phys. A: Math. Theor. {\bf49} (2016), 145204

%\bibitem{LKh:2012} S. Lakaev, Sh. Kholmatov: Asymptotics of eigenvalues of two-particle Schr\"odinger operators on lattices with zero range interaction. J. Phys. A: Math. Theor. {\bf44} (2011), 135304.

\bibitem{Mattis:1986} D. Mattis: The few-body problem on a lattice. Rev. Mod. Phys. {\bf58} (1986), 361--79.
 

%\bibitem{Mogilner:1991} A. Mogilner:  Hamiltonians in solid-state physics as multi-particle discrete Schr\"odinger operators: problems and results. Adv. Sov. Math. {\bf5} (1991), 139--194.


\bibitem{Muminov:2009} M. Muminov:   The infiniteness of the number of eigenvalues in the gap in the essential spectrum for the three-particle Schr\"odinger operator on a lattice. Teoret. Mat. Fiz. {\bf159} (2009), 299--317.

\bibitem{NE:2017} P. Naidon, S. Endo:  Efimov physics: a review. Rep. Prog. Phys. {\bf80} (2017), 056001.

\bibitem{PMC:1985} J. Perez, C. Malta, F. Coutinho: Bound states of N-particles: a variational approach.  J. Math. Phys. {\bf26} (1985), 2262--2267.

\bibitem{Sigal:1976} I. Sigal: On the discrete spectrum of the Schr\"odinger operators of multi-particle systems. Commun. Math. Phys. {\bf48} (1976), 137--154.

\bibitem{Sobolev:1993} A. Sobolev:  The Efimov effect. Discrete spectrum asymptotics. Commun. Math. Phys. {\bf156} (1993), 101--126.

\bibitem{Tamura:1991} H. Tamura: The Efimov effect of three-body Schr\"odinger operators. J. Funct. Anal. {\bf95} (1991), 433--459.

\bibitem{Yafaev:1974} D. Yafaev: On the theory of the discrete spectrum of the three-particle Schr\"odinger operator. Math. USSR-Sb. {\bf23} (1974), 535--559.

\end{thebibliography}
\end{document}